\newcommand{\email}[1]{\texttt{#1}}
\newcommand{\norm}[1]{\left\Vert {#1}\right\Vert}
\newcommand{\seminorm}[1]{\left\vert {#1}\right\vert}
\newcommand{\innerprocuct}[1]{\left( {#1}\right)}
\newcommand{\M}[1]{\mathcal{M}\left( {#1}\right)}
\newcommand{\order}[1]{\mathcal{O} ({#1})}
\newcommand{\B}[1]{B\left( {#1}\right)}
\newcommand{\GM}[1]{\mathcal{G}_{\mathcal{M}}\left( {#1}\right)}
\newcommand{\vx}{\mathbf{x}}
\newcommand{\vy}{\mathbf{y}}
\newcommand{\vu}{\mathbf{u}}
\newcommand{\vv}{\mathbf{v}}
\newcommand{\vm}{\mathbf{m}}
\newcommand{\vw}{\mathbf{w}}
\newcommand{\vr}{\mathbf{r}}
\newcommand{\vM}{\mathbf{M}}
\newcommand{\bz}{\boldsymbol{0}}
\newcommand{\Ad}{\mathcal{A}} 
\newcommand{\I}{\normalfont{\Romanbar{1}}} 
\newcommand{\shuo}[1]{{\color{black}#1}}
\newtheorem{thm}{Theorem}[section]
\newtheorem{cor}[thm]{Corollary}
\newtheorem{remark}[thm]{Remark}
\newtheorem{example}{Example}[section]
\newtheorem{assumption}{Assumption}[section]
\newtheorem{lemma}[thm]{Lemma}
\newtheorem{proposition}[thm]{Proposition}
\title{BDF schemes for accelerated gradient flows in projection-free approximation of non-convex constrained variational minimization}
\author{Guozhi Dong\thanks{School of Mathematics and Statistics, Hunan Research Center of the Basic Discipline for Analytical Mathematics, HNP-LAMA, Central South University, Changsha 410083, China 
  (\email{guozhi.dong@csu.edu.cn}).}
\and Zikang Gong\thanks{MOE-LCSM, School of Mathematics and Statistics, Institute of Interdisciplinary Studies, Hunan Normal University, Changsha 410081, China  
  (\email{201830124045@hunnu.edu.cn}, \email{ziqingxie@hunnu.edu.cn}).
  }
 \and Ziqing Xie\footnotemark[2]
\and Shuo Yang\thanks{Beijing Institute of Mathematical Sciences and Applications, Beijing, 101408, China
  (\email{shuoyang@bimsa.cn}).}
  }
\begin{document}
\date{}
\maketitle

\renewcommand{\thefootnote}{\relax} 
\renewcommand{\thefootnote}{\arabic{footnote}} 

\begin{abstract}
\shuo{

We propose novel algorithms combining accelerated gradient flows with linearized projection-free treatments of non-convex constraints and BDF pseudo-temporal discretization for quadratic energy minimization. 
A general framework is developed to analyze constraint violations in such projection-free techniques for quadratic constraints. This analysis proves to be universal to all projection-free iterative methods, and constraint error bounds depend solely on iterate regularity.
For BDF-$k$ ($k\in\{1,2,3,4\}$), we derive both unconditional and conditional high-order constraint violation estimates for accelerated gradient flows using our framework. We further discover a new family of BDF-$k$ accelerated gradient methods achieving modified energy stability for arbitrary $k\in\mathbb{Z}^+$.
Numerical experiments validate our theoretical results and demonstrate superior efficiency and accuracy compared to existing gradient flow approaches.
}
\vspace{0.5cm}

\noindent{\it Keywords\/}: Accelerated gradient flow, quadratic constraint, projection free, constraint violation estimate, backward differentiation formula (BDF), energy stability

\vspace{0.5cm}
\noindent\textit{2020 Mathematics Subject Classification}: 65K10, 65N12.
\end{abstract}

\section{Introduction}
\subsection{Context and background}\label{sec:background}

Many mathematical models in science and engineering involve minimizing (quadratic) energies under non-convex constraints. Examples include the vector models (Oseen-Frank and Ericksen) for nematic liquid crystals \cite{frank1958liquid,ericksen1991liquid,de1993physics}, large deformations of nonlinear plates \cite{friesecke2002theorem,bhattacharya2016plates}, and ground states of Bose-Einstein condensates \cite{lieb2001bosons,dalfovo1999theory}, {among others}. The development of efficient numerical methods and/or simulation for these challenging problems has attracted significant research interest in recent years. In infinite-dimensional spaces setting, such problems involve topics in analysis of partial differential equations (PDEs), numerical methods for PDEs, calculus of variations, optimization, and scientific computing, which are key areas in applied and computational mathematics.


In general, such models can be expressed as \shuo{constrained minimization problems of the following form:
\begin{equation}\label{eq:cons_min}
    \underset{\vu\in \Ad}{\operatorname{minimize}}\; E[\vu], \quad E[\vu]:=\frac{1}{2}\mathcal{M}(\vu,\vu),
\end{equation}
where $\mathcal{M}:U\times U\to\mathbb{R}$ is a generic symmetric bilinear form on a Hilbert space $U$ defined on the bounded Lipschitz domain $\Omega\subset\mathbb{R}^d$. The non-convex admissible set $\Ad\subset U$ is defined as 
\begin{equation}\label{eq:admissible_set}
 \Ad:=\{\vu\in U_D: B(\vu,\vu)=\vu_c \; \text{ a.e. in }\Omega\},
\end{equation}
where $B:U\times U\to Z$ is a generic symmetric bilinear operator into another function space Z, given some $\vu_c\in Z$. Furthermore, $U_D$ represent a subset of $U$ where Dirichlet boundary conditions are imposed for $\vu$ and/or derivatives of $\vu$ on a part of the boundary $\Gamma_D\subset\partial\Omega$.  
}  

Gradient descent type algorithms are widely employed for problems of the form \eqref{eq:cons_min}, and the treatment of the constraint in $\Ad$ is crucial for enhancing the effectiveness, efficiency, and accuracy of these numerical methods.
A popular approach involves linearizing the constraint at each iteration of the gradient flow by solving for increments within a subspace $\mathcal{F}_{\vu^n} \subset U$, \shuo{with the definition  
\begin{equation}\label{eq:tangent_F}
 \mathcal{F}_{\vu}:=\{\vv\in U_0: B(\vu,\vv)=\bz \; \text{ a.e. in }\Omega\},
\end{equation}
where $\vu^n$ denotes the previous iterate and $U_0$ represents the subspace where functions are vanishing on $\Gamma_D\subset\partial\Omega$. 
More specifically, the gradient flow entails computing $d_t\vu^{n+1}\in\mathcal{F}_{\vu^n}$ with given $\vu^{n}\in U$ and a time step $s>0$ such that 
\begin{equation}\label{eq:disc-proj-free-gf}
\innerprocuct{d_t\vu^{n+1},\vv}_{U^*,U}+\delta E[\vu^{n}+sd_t\vu^{n+1}](\vv) =0,
\end{equation}
for all $\vv\in\mathcal{F}_{\vu^n}$ and updating $\vu^{n+1}=\vu^{n}+sd_t\vu^{n+1}$. 
Here $\delta E[\cdot]:U\to U^{*}$ denotes the first variation of the functional $E$ and $\delta E[\vu](\vv)=\mathcal{M}(\vu,\vv)$.} 
In specific cases, an additional projection step onto $\Ad$ can be performed. For example, \cite{alouges1997new} proposes a stable projection method for the Oseen-Frank model. With the spherical constraint $|\vu|^2=1$, a natural projection step defined as $\vu^n_{proj} := \frac{\vu^n}{|\vu^n|}$ can be employed after each iteration $\vu^n$. Additionally, in the case of an $L^2$ normalizing constraint, techniques combining projection with Lagrange multipliers for normalizing gradient methods are presented in \cite{LiuCai21}, aiming to make the entire trajectory sitting on the spherical manifold. For broader constraints, a coupled primal-dual accelerated flow based on augmented Lagrange multipliers is discussed in \cite{CheDonLiuXie23}.


However, a stable projection step is not always feasible. For instance in the large bending deformations of nonlinear plates, the matrix-valued isometry constraint $\nabla\vu^T\nabla\vu=I_2$ poses challenges in designing a stable projection step, to the best of our knowledge. Even for vector models of nematic liquid crystals with the spherical constraint, ensuring stability in the projection approach may require spatial discretization that satisfies specific restrictive conditions to maintain certain monotonicity properties \cite{bartels2005stability,nochetto2017finite}.

Taking into account these limitations, the projection-free method, \shuo{namely solving \eqref{eq:disc-proj-free-gf} for $d_t\vu^{n+1}\in\mathcal{F}_{\vu^n}$ without an additional projection step,} becomes a proper alternative in more general scenarios, and it has been investigated for various models, such as \cite{bartels2013approximation,bartels2016projection,nochetto2022gamma,bonito2023numerical,bartels2017bilayer,bartels2020stable,bonito2023gamma,hrkac2019convergent,feischl2017eddy}.
Introducing a pseudo time $t$, a continuous \shuo{counterpart of \eqref{eq:disc-proj-free-gf} reads as follows: $\dot{\vu}$ belongs to $\mathcal{F}_{\vu}$ and } 
\begin{equation}\label{eq:cont-proj-free-gf}
\innerprocuct{\dot{\vu},\vv}_{U}+\delta E[\vu](\vv) =0,
\end{equation}
for all $\vv\in\mathcal{F}_{\vu}$. Additionally, \eqref{eq:disc-proj-free-gf} can be viewed as a semi-implicit temporal discretization of \eqref{eq:cont-proj-free-gf} by approximating $\dot{\vu}$ via a BDF-1 scheme.

In such projection-free approaches, the constraint cannot be satisfied exactly. However, the constraint violation is controlled linearly by $s$ independent of the iteration number \shuo{$n$ as 
\begin{equation}\label{eq:linear-control-cons}
\|B(\vu^n,\vu^n)-\vu_c\|_Z\lesssim s, 
\end{equation}
for any $n\in\mathbb{N}$. The constraint violation formally converges to $0$ in the limit $s\to0$.
Remarkably, in the projection-free methods, the constraint violation is closely tied to optimization accuracy. In practice, choosing a small $s$ may yield a reasonably accurate solution, but this typically increases the number of total iterations with higher computational costs. Therefore, a trade-off arises between accuracy and efficiency when selecting $s$. Consequently, developing projection-free methods with higher-order constraint consistency is highly desirable.    
}
   
Recently, Akrivis, Bartels and Palus \cite{bartelsakrivis2023quadratic} investigate a variant of projection-free gradient flow by approximating $\dot{\vu}$ via a BDF-2 scheme and also modify the tangent space to be $\mathcal{F}_{\hat{\vu}^n}$ at $\hat{\vu}^n:=2\vu^{n-1}-\vu^{n-2}$. They prove an unconditional $\order{s}$ estimate for constraint violation, and an $\order{s^2}$ estimate under a discrete regularity assumption
\begin{equation}\label{eq:disc-reg-assump-ABP}
s^2\sum_{n=2}^N\|d^2_t\vu^n\|_U^2\le C,
\end{equation}
with a uniform constant $C>0$.

Despite its simplicity and stability, the gradient descent type method may converge slowly in the presence of no strong convexity or significant anisotropy. To address this limitation, accelerated gradient flows in Sobolev spaces have gained much attention, fueled by progress in second-order dynamics for convex optimization and their applications in variational problems. 
This concept traces back to foundational works in unconstrained convex optimization by Polyak \cite{Pol64}, Nesterov \cite{Nesterov1983}, Attouch et al. \cite{AttGouRed2000}, and Su et al. \cite{SuBoyCan2016JMLR}. Recent research highlights advantages in efficiency and robustness of these dynamics over traditional gradient flows in various variational contexts, as noted in \cite{CheDonLiuXie23, DonHinZha2021SIIMS, calder2019pde,bao2024convergence}.

In particular, for models of nonlinear plates characterized by non-convex constrained minimization problems in the form \eqref{eq:cons_min}, Nesterov-type and heavy-ball acceleration techniques are integrated with the projection-free strategy as discussed in \cite{DonGuoYan24}. The result there demonstrates that projection-free accelerated gradient flows significantly reduce the required number of iterations compared to conventional gradient flows for complex models, such as bilayer or prestrained plates with substantial anisotropy (the methods have comparable speed for simpler models). Furthermore, using the BDF-1 scheme, it is proved that constraint violation remains $\mathcal{O}(s)$, while the BDF-2 scheme achieves a convergence rate of $\order{s^3}$ for constraint violations as a computational observation. However, there is a lack of analysis regarding the BDF-2 scheme for accelerated gradient flows, not to mention systematic investigations into higher-order BDF schemes.

Inspired by technique tools in \cite{bartelsakrivis2023quadratic} and computational phenomenon in \cite{DonGuoYan24}, we aim to \shuo{investigate constraint violations for general BDF-$k$ projection-free iterative methods. Specifically, we design and analyze accelerated gradient flows with BDF-1/2/3/4 schemes in a general set-up in this work.} 
We write a continuous version of projection-free accelerated gradient flow for \eqref{eq:cons_min} as follows: $\dot{\vu}$ belongs to the tangent space $\mathcal{F}_{\vu}$ and 
\begin{equation}\label{eq:second order flow problem}
\innerprocuct{\ddot{\vu},\vv}_{U}+\frac{\alpha}{t}\innerprocuct{\dot{\vu},\vv}_{U}+\delta E[\vu](\vv) =0,
\end{equation}
for all $\vv\in\mathcal{F}_{\vu}$ \shuo{and $\dot{\vu}(0)=\bz$}. The damping coefficient is $\alpha/t$. 
\shuo{Following established convex optimization theory (see e.g. \cite{AttChbPeyRed2018}), we adopt the parameter range $\alpha \geq 3$ in this work, although we face non-convex constrained minimization problems \eqref{eq:cons_min}. For comprehensive discussion of the choice of $\alpha$ and the damping term, we refer to \cite{AttChbPeyRed2018,AttBotCse2023JEMS,BotDonElbSch2022FCM}.}  
It is worth noting that \eqref{eq:second order flow problem} is formally a weak formulation of dissipative hyperbolic type PDEs analogous to the second-order dynamics (ODE) proposed by Su et al. \cite{SuBoyCan2016JMLR}.

Backward differentiation formulas (BDFs) are frequently used for the temporal discretization of evolutionary equations, providing derivative approximations at the current time-step with varying orders of accuracy based on historical function values. 
Significant contributions on high-order BDFs for parabolic equations can be found in, e.g., both quasi-linear and nonlinear cases \cite{Buyang-IMPLICIT-EXPLICIT-BDF-METHODS-FOR-A-CLASS-OF-PARABOLIC-EQUATIONS-WITH-NON-SELFADJOINT-OPERATORS,BuyangAllen–CahnEquation,Akrivis-BDF-quasilinearParabolicEquations,Akrivis-Lubich-BDF-quasi-linear-parabolicEquations,Akrivis-Stability-BDF-Nonlinear-ParabolicEquations}, as well as applications to parabolic equations on evolving surfaces \cite{Lubich-BDF-parabolicPDE-on-EvolvingSurfaces,Lubich-Linearly-implicit-FullDiscretization-of-SurfaceEvolution} and fractional evolutionary equations \cite{BuyangFractionalEvolutionEquations}.
 Moreover, in \cite{LLG} the Landau–Lifshitz–Gilbert (LLG) equation is examined using implicit BDF time discretizations of orders 1 to 5, where an approximate tangent space enforcing the orthogonality constraint is employed. For BDF methods of orders 3 to 5, quasi-optimal error estimates
are proved, while these results hinge on the existence of sufficiently regular solutions to the LLG equation. 


\subsection{Contribution and structure of the paper}
\shuo{
In this work, we propose and analyze accelerated gradient flow methods for approximating solutions to the non-convex constrained minimization problem \eqref{eq:cons_min} within an abstract setting. We formulate a general framework to estimate constraint violations in projection-free schemes, where linearized constraints are enforced via BDF approximations.
Novel projection-free accelerated gradient flow schemes are developed, and higher-order constraint violations of such accelerated schemes are shown, compared to existing projection-free gradient flows. 
In addition, modified energy stability and weak convergence are established for these accelerated flows. 
We emphasize that the computational acceleration effect of these flows has been discussed thoroughly in \cite{DonGuoYan24}, and our focus here is on analyzing constraint violations, which is critical for approximation accuracy, as indicated in Section~\ref{sec:background}.
The results and structure of this paper are summarized as follows.
}

\shuo{Section \ref{sec:assump-ex} states assumptions on operators and spaces for \eqref{eq:cons_min} and provides examples. Section \ref{sec:diff} introduces BDF formulas and useful results about linear difference equations.}

\shuo{
In Section \ref{sec:cons-vio-bdfn}, we develop a general framework to estimate the constraint violation 
$\|B(\vu^n,\vu^n)-\vu_c\|_Z$ in arbitrary projection-free schemes, where the linearized constraint,  
\begin{equation}\label{eq:linear-cons-0}
B(\dot{\vu}^n,\hat{\vu}^n)=0,
\end{equation}
is imposed at each iteration using the BDF-$k$ approximation $\dot{\vu}^n$ of time derivatives and the BDF-$k$ extrapolation $\hat{\vu}^n$.    
The analysis proceeds in two steps:
(1) In Section \ref{sec:roadmap} we state the crucial algebraic identity \eqref{eq:abstract-identity} relating a sum of quadratic terms $B(\vu^n,\vu^n)$ to discrete temporal derivatives $d^j_t\vu^n$, and its detailed derivation is provided in Appendix~\ref{appendix:appendix-A}.
(2) In Section \ref{sec:cons-vio-general}, we employ \eqref{eq:abstract-identity} to obtain a general constraint violation expression in Theorem \ref{thm:bn-expression} and further an estimate in the form of 
\begin{equation}\label{eq:cons-upper-general-0}
\|B(\vu^n,\vu^n)-\vu_c\|_Z\lesssim \sum_{(j,n)\in\mathcal{J}}s^{2j}\|d^j_t \vu^{n}\|_U^2,
\end{equation}
where $\mathcal{J}\subset\mathbb{N}\times\mathbb{N}$ is an index set.
In particular, under the discrete regularity conditions
\begin{equation}\label{eq:discrete-regularity-general}
\sum_{(j,n) \in \mathcal{J}} s^{2j-q} \|d^j_t \vu^{n}\|_U^2 \leq C \text{ and/or } \max_{1\leq n\leq N}\|d^j_t \vu^{n}\|_U^2\leq C,
\end{equation}
with $q=3,3,4$ for BDF-$2,3,4$ respectively, we establish $\order{s^3}$ (BDF-2/3) and $\order{s^4}$ (BDF-4) estimates of constraint violations.
Notably, these results apply to any iterative scheme satisfying \eqref{eq:linear-cons-0}.
}

\shuo{
In Section \ref{sec:BDF1} we introduces a BDF-$1$ projection-free accelerated gradient flow scheme inspired by \eqref{eq:second order flow problem}. 
Then Section \ref{sec:bdf2-stability} presents the BDF-$2$ version, which enforces \eqref{eq:linear-cons-0} by computing $\dot{\vu}^n\in\mathcal{F}_{\hat{\vu}^n}$ at each iteration with the BDF-2 extrapolation $\hat{\vu}^n$.
The BDF-2 method satisfies the total energy stability
\begin{equation}\label{eq:GM-stab-0}
	\GM{\vu^n,\vu^{n-1}}+\norm{\dot{\vu}^n}^2_U\leq \GM{\vu^{n-1},\vu^{n-2}}+\norm{\dot{\vu}^{n-1}}^2_U,
\end{equation}
using a BDF-$2$ adapted bilinear form $\mathcal{G}_\mathcal{M}$, following the idea from \cite{bartelsakrivis2023quadratic} and extending the notion of $G$-stability from numerical ODEs. 
This stability ensures existence of a weak limit $\vu^*$ of $\{\vu^n\}_{n\in\mathbb{N}}$, and we characterize $\vu^*$ as a local minimizer along the limiting tangent direction.
Additionally, Section \ref{sec:cons-vio-BDF2} derives an unconditional $\order{s^2}$ constraint violation estimate by combining the stability \eqref{eq:GM-stab-0} and the framework presented in Section \ref{sec:cons-vio-bdfn}.
Our results improve upon the estimates for the BDF-2 projection-free gradient flows in \cite{bartelsakrivis2023quadratic}, gaining one order in both conditional and unconditional constraint violation estimates.
The key ingredient is that the trajectory of accelerated gradient flow is automatically endowed with stronger regularity in its second-order time derivative.
}

\shuo{
In Section \ref{sec:modified}, we further propose a new class of energy stable projection-free accelerated gradient methods with general BDF-$k$ approximations. The key modification replaces $\mathcal{M}(\vu^n,\vw)$ in \eqref{eq:discrete_flow_BDF2} with $\mathcal{M}(\widetilde{\vu}^n,\vw)$ in \eqref{eq:discrete_flow_BDFk_modified}, where $\widetilde{\vu}^n$ is defined in \eqref{eq:def-tilde-un}. 
The new scheme achieves stability with a modified total energy $E[\widetilde{\vu}^n]+\frac{1}{2}\|\dot{\vu}^n\|_U^2$, differing from the method in Section \ref{sec:BDF2}. The stability also ensures the existence of a weak limit $\vu^*$ and the unconditional $\order{s^2}$ constraint violation estimates for $2\leq k\leq 4$.
} 

Finally, we present numerical simulations in Section \ref{sec:numerical results}, focusing on an anisotropic Dirichlet energy and a model for prestrained plates. In the first example, we demonstrate that the constraint violation converges at rates of $\order{s}$, $\order{s^3}$, $\order{s^3}$, and $\order{s^4}$ for accelerated flows using BDF-1/2/3/4 schemes respectively. We also validate the satisfaction to the corresponding discrete regularity assumptions for BDF-2/3/4 methods and highlight the higher-order accuracy of our proposed methods compared to existing projection-free gradient flow methods. In the second example, we further explore the impact of spatial discretization refinement on the computational performance of the new algorithms, concluding that a sufficiently fine mesh or higher-order spatial discretization is necessary to maximize the potential of capabilities of the projection-free accelerated flows for more complicated problems like prestrained plates. 

\section{\shuo{Preliminaries}}\label{sec:setting}
\setcounter{equation}{0}
\subsection{\shuo{Assumptions and examples}}\label{sec:assump-ex}

\shuo{We first make the following assumptions on function spaces and operators appearing in the constrained minimization problem \eqref{eq:cons_min}.}
\shuo{
\begin{assumption}\label{assum:space-U}
Let $\Omega\subset\mathbb{R}^d$ be an open bounded Lipschitz domain. 
We assume the Hilbert space $U$ consists of functions $\vu:\Omega\to \mathbb{R}^m$ ($m\geq 1$) whose Jacobian $\nabla \vu:\Omega\to\mathbb{R}^{m\times d}$ exist in the weak sense. 
We assume that $U$ is embedded in $H^1(\Omega;\mathbb{R}^m)$, i.e.,
	\begin{equation}\label{assumption of embeddedness}
		\norm{\vu}_{H^1}\leq c_U\norm{\vu}_U,
	\end{equation}
where $c_U>0$ is a constant depending only on $U$.
\end{assumption}    
}

\shuo{ 
\begin{assumption}\label{assum:B}
For the generic bilinear operator $B:U\times U\to Z$ that describes the quadratic constraint, we assume its boundedness as
\begin{equation}\label{eq:B-upper}
 	\norm{\B{\vu,\vu}}_Z\leq c_Z\norm{\vu}_U^2,
 \end{equation}
 for all $\vu\in U$.
\end{assumption}     
}    

\begin{assumption}\label{assum:bilinear}
	The bilinear form $\mathcal{M}(\cdot,\cdot)$ satisfies the following properties:
	\begin{itemize}
    \item Coercivity: $\mathcal{M}(\vu,\vu)\ge C_1\|\vu\|_U^2$ for all $\vu\in U$.
    \item Continuity: $\mathcal{M}(\vu,\vv)\le C_2\|\vu\|_U\|\vv\|_U$ for each pair of $\vu,\vv\in U$.
  	\end{itemize}
\end{assumption}

Under Assumption \ref{assum:bilinear}, the direct method of the calculus of variations ensures the existence of the solutions of Problem \eqref{eq:cons_min} if the admissible set $\mathcal{A}$ is non-empty.

\shuo{
Using binominal formula $2a(a+b)=a^2+(a+b)^2-b^2$, we readily derive a useful identity for generic bilinear forms as
}
	\begin{equation}\label{binominal formula}
		2\mathcal{M} (d_t\vu^n,\vu^n)=\frac{1}{s}\left(\M{\vu^n,\vu^n}-\M{\vu^{n-1},\vu^{n-1}}\right)+s\M{d_t\vu^n,d_t\vu^n}.
	\end{equation}

Moreover, throughout the paper, we use $A_1:A_2$ to represent contraction between any pair of tensors $A_1,A_2$ with compatible orders and dimensions.

We next present a few examples to illustrate the general setup of the constrained minimization problem \eqref{eq:cons_min}. These examples will be utilized later in Section \ref{sec:numerical results} to validate the properties of the proposed algorithms. 
\begin{example}[anisotropic Dirichlet energy]\label{exampjle:anisotropic Dirichlet energy}
		We consider an anisotropic variant of the well-known Dirichlet energy, where the bilinear form $\mathcal{M}$ is defined as
        \begin{equation}\label{eq:dirichlet-energy}
		\mathcal{M}(\vu,\vu)=\innerprocuct{\nabla\vu,\nabla\vu \vM}_{L^2},
        \end{equation}
		where $\vu\in U:= H^1(\Omega;\mathbb{R}^m)$ and $\vM$ is a $d\times d$ diagonal positive definite matrix. 
        In this example, we consider $d=2$ or $3$ and $m=2$ or $3$ with $m\ge d$.
        When $\vM=I_d$ (identity matrix), the energy \eqref{eq:cons_min} with \eqref{eq:dirichlet-energy} reduces to the standard Dirichlet energy $E[\vu]=\frac12\|\nabla\vu\|_{L^2}^2$. 
        Moreover, we consider 
        \shuo{
        \begin{equation}\label{eq:dirichlet-energy-cons}
        B(\vu,\vv):=\vu\cdot\vv, \quad \vu_c := 1,
        \end{equation}
        and naturally we take $Z:=L^1(\Omega)$ in this case. 
        }
        \shuo{This is a point-wise unit length constraint for the vector field $\vu$.}
        \shuo{Furthermore, we impose Dirichlet boundary condition so that $\vu=\vu_D\text{ on }\Gamma_D\subset\partial\Omega$ with a given data $\vu_D\in H^1(\Omega;\mathbb{R}^m)$ satisfying $|\vu_D|=1$ a.e. in $\Omega$.} 
        \shuo{The Assumptions \ref{assum:space-U}, \ref{assum:B}, and \ref{assum:bilinear} are obviously satisfied.}
	\end{example}

\begin{example}[prestrained plates]\label{ex:plates}
We consider a simplified model of prestrained plates that entails the minimization of a bending energy subject to metric constraints, corresponding to the model studied in \cite{bonito2022ldg,bonito2023numerical} with the choice of Lam\'e coefficient $\mu=12$ and $\lambda=0$. We take $d=2$, $m=3$ and the bilinear form $\mathcal{M}$ reads 
\begin{equation}\label{eq:M-plates}
\mathcal{M}(\vu,\vv)=2(A^{-\frac{1}{2}}D^2\vu A^{-\frac{1}{2}},A^{-\frac{1}{2}}D^2\vv A^{-\frac{1}{2}})_{L^2(\Omega)},
\end{equation}
with $\vu,\vv\in U$ and $U:=H^2(\Omega;\mathbb{R}^3)$. 
We take 
\shuo{
\begin{equation}\label{eq:plates-cons}
 B(\vu,\vv):=\frac12(\nabla\vu^T\nabla\vv+\nabla\vv^T\nabla\vu),\quad \vu_c := A,
\end{equation}
and $Z:=L^1(\Omega;\mathbb{R}^{2\times2})$. 
}
\shuo{This is} a metric constraint with the target metric $A\in Z$ given as a symmetric positive definite matrix-valued function. 
In this example, we consider the Dirichlet boundary conditions 
\shuo{
\begin{equation}\label{eq:plates-BC}
 \vu = \phi,\quad \nabla\vu = \psi \quad \text{on }\Gamma_D,
\end{equation}
where} 
the boundary data $\phi\in [H^2(\Omega)]^3$ and $\psi \in [H^1(\Omega)]^{3\times2}$ are compatible with $A$, namely $\psi=\nabla\phi$ and $\psi^T\psi=A$ on $\Gamma_D$.
\shuo{The Assumptions \ref{assum:space-U}, \ref{assum:B}, and \ref{assum:bilinear} are obviously satisfied.}
\end{example}




\subsection{\shuo{Difference operators and difference equations}}\label{sec:diff}
We introduce the backward difference operator $d_t$, which approximates the first-order time derivative, as follows
 \begin{equation}\label{eq:BDF1-formula}
 d_t a_n=\frac{a_n-a_{n-1}}{s},
 \end{equation}
where {$a_n$ (with $n\in\mathbb{N}$) represents a function sequence and $s$ is an artificial time-step size}. Similarly, we define higher order discrete time derivative
as 
\begin{equation}\label{eq:Backward-formula-higher}
d_t^ka_n=\frac{d_t^{k-1} a_n-d_t^{k-1} a_{n-1}}{s},
 \end{equation}	
with $k\ge2$, $d^1_t:=d_t$ and $n\ge k$.
\shuo{
We readily express $d^k_ta_{n}$ for any $k\ge1$ and $n\ge k$ as  
\begin{equation}\label{eq:Backward-formula-expression}
    d^k_ta_{n}=s^{-k}\sum_{m=0}^k(-1)^m\binom{k}{m}a_{n-m}. 
\end{equation}
}

The general BDF-k approximation of time derivative and \shuo{$k$-th} extrapolation are defined as follows:
\begin{equation}\label{eq:bdf-formula-extrapolation}
\dot{\vu}^n:=\frac{1}{s}\sum_{j=0}^k \delta_j \vu^{n-j}, \quad \hat{\vu}^n:=\sum_{j=0}^{k-1} \gamma_j \vu^{n-j-1},
\end{equation}
where the coefficients $\{\delta_j\}_{j=0}^k$ and $\{\gamma_j\}_{j=0}^{k-1}$ \shuo{are defined as}
\shuo{
\begin{equation}\label{eq:def-bdf-k-coeff}
    \delta_0=\sum_{r=1}^k\frac{1}{r},\quad\delta_i=\frac{(-1)^i}{i}\binom{k}{i} \text{ for } i=1,\ldots,k,\quad \gamma_{j}=(-1)^{j}\binom{k}{j+1},
\end{equation}
where $j=0,\ldots,k-1$. The following properties of these coefficients and a combinatorial identity as follows will be useful in our analysis:
\begin{equation}\label{eq:useful-relation}
    \sum_{j=0}^k\delta_{j}=0,\quad\sum_{j=0}^{k-1}\gamma_{j}=1,\quad\sum_{j=0}^k(-1)^j\binom{k}{j}=0.
\end{equation}   
}
\shuo{We next define related coefficients $\{\tilde\delta_{j}\}_{j=0}^{k-1}$ that will be useful in our analysis.  
\begin{equation}\label{eq:def-tilde-delta}
\tilde\delta_0 := \delta_0,\quad \tilde\delta_{j} := \sum_{\ell=0}^{j}\delta_{\ell} \text{ for }1\leq j\leq k-1,
\end{equation}
and they satisfy that  
\begin{equation}\label{eq:sum-tilde-delta}
    \sum_{j=0}^{k-1} \tilde\delta_j=1.
\end{equation}  
}
\shuo{
We list values of these coefficients for BDF-$k$ schemes with $k=2,3,4$ as in Table \ref{tab:bdf-coefficients}. 
\begin{table}[ht]
\centering
\begin{tabular}{|*{10}{c|}} 
\hline
 & \multicolumn{3}{c|}{$\delta_j$} & \multicolumn{3}{c|}{$\tilde\delta_j$ }  & 
 \multicolumn{3}{c|}{ $\gamma_j$} 
 \\ \hline
j & $k=2$ & $k=3$ & $k=4$ &  $k=2$ &$k=3$ & $k=4$ &  $k=2$ &  $k=3$  &$k=4$ \\ \hline
$0$ & $3/2$ & $11/6$ & $25/12$ & $3/2$ & $11/6$ & $25/12$ & $2$ & $3$ & $4$ \\ \hline
$1$ & $-2$ & $-3$ & $-4$ & $-1/2$ & $-7/6$ & $-23/12$ & $-1$ & $-3$ & $-6$ \\ \hline
$2$ & $1/2$ & $3/2$ &  $3$ & \diagbox{}{} & $1/3$ & $13/12$ & \diagbox{}{} & $1$ & $4$ \\ \hline
$3$ & \diagbox{}{} & $-1/3$ & $-4/3$ & \diagbox{}{} & \diagbox{}{} & $-1/4$ & \diagbox{}{} & \diagbox{}{} & $-1$ \\ \hline
$4$ & \diagbox{}{} & \diagbox{}{} & $1/4$ & \diagbox{}{} & \diagbox{}{} & \diagbox{}{} & \diagbox{}{} & \diagbox{}{} & \diagbox{}{} \\ \hline
\end{tabular}
\caption{Coefficients for BDF-$k$ schemes with $k=2,3,4$.}
\label{tab:bdf-coefficients}
\end{table}
}

\shuo{
We next define the characteristic polynomial $\delta(z)$ with $z\in\mathbb{C}$ for the k-step BDF method as 
\begin{equation}\label{eq:delta-polynomial}
\delta(z):=\sum_{j=0}^k\delta_jz^j. 
\end{equation}
For $k\le 6$, $\delta(z)$ has only one unimodular root $1$ and all other roots lie outside the unit disc in the complex plane \cite{wanner1996solving}. 
Therefore, we factor out $1-z$ from $\delta(z)$ so that $\delta(z)=(1-z)\tilde\delta(z)$, with 
\begin{equation}\label{eq:tilde-delta-polynomial}
\tilde\delta(z):=\sum_{j=0}^{k-1}\tilde\delta_jz^j,
\end{equation}
where $\tilde\delta_j$ is defined as \eqref{eq:def-tilde-delta}, and all the roots of $\tilde\delta(z)$ have modulus bigger than $1$. 
Whence the rational function $1/\tilde\delta(z)$ is holomorphic in the open unit disk and admits a Taylor expansion for $|z|<1$ about the origin as
\begin{equation}\label{eq:taylor-expansion}
\frac{1}{\tilde\delta(z)}=\sum_{j=0}^{\infty}\eta_jz^j.
\end{equation}
Multiplying \eqref{eq:taylor-expansion} by $\tilde\delta(z)$ and comparing coefficients lead to linear homogeneous difference equations for $\eta_j$: 
\begin{equation}\label{eq:gamma-diff-eq}
\eta_0=\tilde\delta_0^{-1},\quad\sum_{j=0}^{k-1}\tilde\delta_j\eta_{n-j}=0,
\end{equation}
for all $n\geq 1$ with the convention that $\eta_j=0$ if $j<0$.
The solution can be expressed as  
\begin{equation}\label{eq:gamma-sol}
\eta_n=\sum_{i=1}^kC_ir_i^{-n},
\end{equation}
where $r_1,\ldots,r_k$ are $k$ roots of the characteristic polynomial \eqref{eq:tilde-delta-polynomial}, and constants $C_i$ are determined by initial values $\eta_0,\ldots,\eta_{k-1}$; see \cite{henrici1962discrete,wanner1996solving,gautschi2011numerical}. We note that these initial values can be computed recursively by \eqref{eq:gamma-diff-eq} with $n=0,1,\ldots,k-1$. Since $|r_i|>1$ for every $1\leq i\leq k$, it is clear that for all $N\ge0$ 
\begin{equation}\label{eq:sum_gamma_bdd}
\left|\sum_{n=0}^N\eta_n\right|\leq C,
\end{equation}
with a uniform constant $C$ independent of $N$. 
}

\shuo{
The solution of inhomogeneous difference equations with coefficients $\{\tilde\delta_{j}\}_{j=0}^{k-1}$ can be expressed in terms of the initial values, $\{\tilde\delta_{j}\}_{j=0}^{k-1}$ and $\{\eta_{j}\}_{j=0}^{\infty}$. We state the result in the next Lemma and refer to \cite{henrici1962discrete,gautschi2011numerical} for classical theory of difference equations.  
\begin{lemma}\label{lem:diff-eq-sol-general}
    For every fixed $2\le k\le 6$ and a given sequence $\{f_n\}_{n=0}^{\infty}$, suppose a sequence $\{a_n\}_{n=0}^{\infty}$ satisfy a linear inhomogeneous difference equation as 
    \begin{equation}\label{eq:diff-eq-general}
        \sum_{j=0}^{k-1} \tilde\delta_j a_{n-j}=f_n,\;\quad   \text{ for all  } n\geq k.
    \end{equation}
    Then the equation \eqref{eq:diff-eq-general} admits a solution 
    \begin{equation}\label{eq:diff-eq-sol-general}
    a_n=-\sum_{m=1}^{k-1}\sum_{\ell=k-m}^{k-1}\tilde\delta_{\ell}\eta_{n-\ell-m}a_m+\sum_{j=k}^n\eta_{n-j}f_j,\;\quad \text{ for all  } n\geq k.
    \end{equation}
\end{lemma}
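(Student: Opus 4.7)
The plan is to solve the inhomogeneous difference equation by convolving the forcing with the Green's function $\{\eta_n\}$, and to recover the boundary contribution from the initial data $a_1,\ldots,a_{k-1}$ via a generating function argument. Concretely, I would introduce the formal power series $\tilde A(z):=\sum_{n\ge0}a_nz^n$ and $\tilde F(z):=\sum_{n\ge k}f_nz^n$. Multiplying \eqref{eq:diff-eq-general} by $z^n$, summing over $n\ge k$, and rearranging the double sum so as to isolate the missing initial indices produces the algebraic identity
\begin{equation*}
\tilde\delta(z)\,\tilde A(z)=\tilde F(z)+\sum_{j=0}^{k-1}\tilde\delta_j\,z^j\sum_{p=0}^{k-j-1}a_p\,z^p.
\end{equation*}
Since $\tilde\delta(z)$ has no zeros in the open unit disk, dividing by $\tilde\delta(z)$, substituting the expansion \eqref{eq:taylor-expansion}, and reading off the coefficient of $z^n$ for $n\ge k$ yields the raw representation
\begin{equation*}
a_n=\sum_{j=k}^n\eta_{n-j}\,f_j\;+\;\sum_{p=0}^{k-1}a_p\sum_{j=0}^{k-p-1}\tilde\delta_j\,\eta_{n-j-p}.
\end{equation*}

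The second task is to massage this into the form stated in \eqref{eq:diff-eq-sol-general}. Two applications of the homogeneous recurrence \eqref{eq:gamma-diff-eq} satisfied by $\{\eta_n\}$ accomplish this. First, the $p=0$ contribution equals $a_0\sum_{j=0}^{k-1}\tilde\delta_j\eta_{n-j}$, which vanishes for $n\ge k\ge 1$ by \eqref{eq:gamma-diff-eq}; this is precisely why $a_0$ does not appear in the stated formula. Second, for each $1\le p\le k-1$ the identity $\sum_{j=0}^{k-1}\tilde\delta_j\eta_{n-j-p}=0$ (valid since $n-p\ge 1$) allows me to flip the truncated sum $\sum_{j=0}^{k-p-1}$ into its complement $-\sum_{j=k-p}^{k-1}$. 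Relabeling $p\mapsto m$ and $j\mapsto\ell$ then reproduces exactly the double sum in \eqref{eq:diff-eq-sol-general}.

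The main obstacle I anticipate is organizational rather than analytical: one has to verify at each step that the argument of $\eta$ indeed lies in the range $\ge 1$ so that \eqref{eq:gamma-diff-eq} is applicable, while separately handling the boundary cases $\eta_0=\tilde\delta_0^{-1}$ and $\eta_j=0$ for $j<0$. Beyond this book-keeping, the derivation never invokes the explicit spectral representation \eqref{eq:gamma-sol}. As a sanity check I would verify the base case $n=k$ directly: the factor $\eta_{k-\ell-m}$ is nonzero only when $\ell=k-m$ (giving $\eta_0=\tilde\delta_0^{-1}$), so the formula collapses to $a_k=\tilde\delta_0^{-1}\bigl(f_k-\sum_{j=1}^{k-1}\tilde\delta_ja_{k-j}\bigr)$, which is exactly \eqref{eq:diff-eq-general} at $n=k$. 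An equivalent alternative is a strong induction on $n\ge k$ that substitutes \eqref{eq:diff-eq-sol-general} into the recurrence, but this requires splitting $\sum_{j=0}^{k-1}\tilde\delta_ja_{n-j}$ according to whether $n-j\ge k$ (formula applies) or $n-j<k$ (initial data), which I find less transparent than the generating function route.
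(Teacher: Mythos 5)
Your argument is correct. Note that the paper itself gives no proof of this lemma; it simply defers to classical references on linear difference equations (Henrici, Gautschi), so there is nothing in the text to compare against, and your generating-function derivation is a legitimate self-contained substitute. The key steps all check out: the identity $\tilde\delta(z)\tilde A(z)=\tilde F(z)+\sum_{j=0}^{k-1}\tilde\delta_j z^j\sum_{p=0}^{k-j-1}a_pz^p$ is a valid formal-power-series manipulation (division by $\tilde\delta(z)$ is legitimate purely formally since $\tilde\delta_0\neq 0$, so no appeal to the root location or to \eqref{eq:gamma-sol} is needed); the cancellation of the $p=0$ block and the flip of the truncated sums both rest on $\sum_{j=0}^{k-1}\tilde\delta_j\eta_{(n-p)-j}=0$, which is applicable because $n-p\geq k-(k-1)=1$ for every $0\leq p\leq k-1$ and $n\geq k$, exactly as required by \eqref{eq:gamma-diff-eq}; and your base-case check at $n=k$ correctly recovers $\tilde\delta_0 a_k+\sum_{j=1}^{k-1}\tilde\delta_j a_{k-j}=f_k$. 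The disappearance of $a_0$ from the final formula is also consistent with the structure of \eqref{eq:diff-eq-general}, whose left-hand side never involves $a_0$ for $n\geq k$. The only caveat worth recording explicitly is the one you already flag: the statement as written presumes the sequence $\{a_n\}$ is determined by \eqref{eq:diff-eq-general} together with the data $a_1,\dots,a_{k-1}$ (which it is, since $\tilde\delta_0\neq 0$), so "admits a solution" really means the unique solution with those initial values is given by \eqref{eq:diff-eq-sol-general}; your derivation establishes precisely that.
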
 
}
  
\shuo{
Expression \eqref{eq:diff-eq-sol-general} immediately yields an upper bound for $|a_n|$ in terms of the initial values $|a_1|,\ldots,|a_{k-1}|$ and $\sum_{j=k}^n|f_j|$. However, our subsequent analysis requires a sharper estimate, which we establish in the following lemma.
\begin{lemma}\label{lem:diff-eq-sol-sharper-estimate}
    For every fixed $2\le k\le 4$ and a sequence $\{a_n\}_{n=0}^{\infty}$ satisfying the linear inhomogeneous difference equation \eqref{eq:diff-eq-general}, the following estimate is valid: 
    \begin{equation}\label{eq:diff-eq-sol-sharper-estimate}
    \sum_{n=k}^N|a_n|^2\lesssim\sum_{n=1}^{k-1}|a_n|^2+\sum_{n=k}^N|f_n|^2,
    \end{equation}
    for all $N\ge k$ and the hidden constant depends on the BDF order $k$.  
\end{lemma}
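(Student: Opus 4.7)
The plan is to leverage the explicit solution representation from Lemma~\ref{lem:diff-eq-sol-general} and then separately bound the contribution from the initial data and from the inhomogeneous forcing. Writing
\[
a_n = \underbrace{-\sum_{m=1}^{k-1}\sum_{\ell=k-m}^{k-1}\tilde\delta_{\ell}\,\eta_{n-\ell-m}\,a_m}_{=:I_n} \;+\; \underbrace{\sum_{j=k}^{n}\eta_{n-j}f_j}_{=:II_n},
\]
the task reduces to showing $\sum_{n=k}^N |I_n|^2 \lesssim \sum_{m=1}^{k-1}|a_m|^2$ and $\sum_{n=k}^N |II_n|^2 \lesssim \sum_{n=k}^N |f_n|^2$.

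The decisive input is the decay of the coefficient sequence $\{\eta_n\}$. Since $2\le k\le 4\le 6$ and all roots $r_1,\dots,r_k$ of $\tilde\delta(z)$ satisfy $|r_i|>1$, the representation $\eta_n=\sum_{i=1}^k C_i r_i^{-n}$ in \eqref{eq:gamma-sol} yields $|\eta_n|\le C\rho^n$ with $\rho:=\max_i |r_i|^{-1}\in(0,1)$. Consequently $\{\eta_n\}_{n\ge 0}\in\ell^1(\mathbb{N})\cap\ell^2(\mathbb{N})$, with both norms depending only on $k$. This geometric decay is the key property we will exploit; strictly speaking, establishing that $|r_i|>1$ for $2\le k\le 4$ is the most delicate point, but it is a classical (and tabulated) root condition of the BDF characteristic polynomial and is already invoked in the excerpt just before \eqref{eq:sum_gamma_bdd}.

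For the initial-value contribution $I_n$, the finite double sum over $m,\ell$ involves only $O(1)$ terms (depending on $k$), so the Cauchy--Schwarz inequality gives
\[
|I_n|^2 \;\lesssim\; \sum_{m=1}^{k-1}\sum_{\ell=k-m}^{k-1}|\eta_{n-\ell-m}|^2 |a_m|^2.
\]
Summing over $k\le n\le N$ and using $\sum_{n\ge 0}|\eta_n|^2 \le C(k)$ produces the bound $\sum_{n=k}^N |I_n|^2\lesssim \sum_{m=1}^{k-1}|a_m|^2$. For the forcing contribution $II_n$, observe that $II_n$ is the truncation of the discrete convolution $(\eta\ast \tilde f)_n$, where $\tilde f_j:=f_j$ for $j\ge k$ and $0$ otherwise. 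Young's convolution inequality on $\ell^1\ast\ell^2\to\ell^2$ then yields
\[
\sum_{n=k}^N |II_n|^2 \;\le\; \|\eta\|_{\ell^1}^2 \sum_{n=k}^N |f_n|^2 \;\lesssim\; \sum_{n=k}^N |f_n|^2,
\]
with a constant depending only on $k$ through $\|\eta\|_{\ell^1}$. Combining the two estimates via $|a_n|^2\le 2|I_n|^2+2|II_n|^2$ yields \eqref{eq:diff-eq-sol-sharper-estimate}. The only genuinely non-routine ingredient is the root location of $\tilde\delta(z)$; once the geometric decay of $\eta_n$ is in hand, the remainder is a short Young/Cauchy--Schwarz argument.
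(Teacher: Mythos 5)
Your proof is correct, but it takes a genuinely different route from the paper. The paper argues directly on the recursion: for $k=2$ it telescopes $2f_n=3a_n-a_{n-1}$, and for $k=3,4$ it forms the combination $f_n+\beta f_{n-1}$ with $\beta=-\tilde\delta_1/\delta_0$ to eliminate the $a_{n-1}$ term, then applies the Cauchy inequality and sums, which requires verifying a numerical smallness condition on the BDF coefficients (condition \eqref{eq:an-sq-cond}) case by case; the paper notes this condition fails for $k=5$. You instead invoke the representation formula of Lemma~\ref{lem:diff-eq-sol-general}, the geometric decay $|\eta_n|\lesssim\rho^n$ coming from the root condition $|r_i|>1$, and Young's inequality $\ell^1\ast\ell^2\to\ell^2$ for the forcing term plus Cauchy--Schwarz for the finitely many initial-data terms. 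Your route is structurally cleaner, avoids all coefficient computations, and in fact yields the lemma uniformly for every $k\le 6$ for which the root condition holds (so it would also cover $k=5,6$, where the paper's smallness condition needs repair); it also exposes the result as nothing more than $\ell^2$-stability of convolution with an $\ell^1$ kernel. The paper's argument, by contrast, is fully elementary and self-contained with explicit constants and does not route through the solution formula. One small point you should make explicit: Lemma~\ref{lem:diff-eq-sol-general} is phrased as ``admits a solution,'' so to apply the formula to the \emph{given} sequence you should note that, since $\tilde\delta_0=\delta_0\neq 0$, the recursion \eqref{eq:diff-eq-general} determines $a_n$ uniquely from $a_{n-1},\dots,a_{n-k+1}$ and $f_n$, hence the given sequence coincides with the one produced by \eqref{eq:diff-eq-sol-general}; this is consistent with how the paper itself uses that lemma in Theorem~\ref{thm:bn-expression}, so it is a remark rather than a gap.
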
 
\begin{proof}
When $k=2$, the difference equation in the general form \eqref{eq:diff-eq-general} can be written as $2f_n=3a_n-a_{n-1}$, which implies that $|a_n|^2-\frac29|a_{n-1}|^2\leq \frac89|f_n|^2$ by the Cauchy-Schwarz inequality. Summing from $n=2$ to $N$ we observe that
 \begin{equation}\label{eq:anfn-diff-sum-1}
 |a_N|^2+\frac79\sum_{n=2}^{N-1}|a_n|^2-\frac{2}{9}|a_1|^2\leq\frac{8}{9}\sum_{n=2}^{N}|f_n|^2,
 \end{equation}
 which immediately leads to an inequality in the form of \eqref{eq:diff-eq-sol-sharper-estimate} for $k=2$. 

 When $k\ge3$, with $\beta:=-\tilde\delta_1/\delta_0$ and the equation \eqref{eq:diff-eq-general} we compute for $n\geq k+1$ 
 \begin{equation}\label{eq:fn-beta}
 f_n+\beta f_{n-1} = \delta_0a_n+\sum_{j=2}^{k-1}(\tilde\delta_j+\beta\tilde\delta_{j-1})a_{n-j}+\beta\tilde\delta_{k-1}a_{n-k}. 
 \end{equation}
Then by the Cauchy inequality, we derive
\begin{equation}\label{eq:an-sq-k-beta}
|a_n|^2\leq k\left(\sum_{j=2}^{k-1}\frac{(\tilde\delta_j+\beta\tilde\delta_{j-1})^2}{\delta_0^2}|a_{n-j}|^2+\frac{\beta^2\tilde\delta_{k-1}^2}{\delta_0^2}|a_{n-k}|^2+\left(\frac{1}{\delta_0}f_n+\frac{\beta}{\delta_0}f_{n-1}\right)^2\right). 
\end{equation}
Summing from $n=k+1$ to $N$, we derive  
\begin{equation}\label{eq:an-sq-sum-1}
\sum_{n=k+1}^N|a_n|^2\leq k\left(\sum_{j=2}^{k-1}\sum_{n=k+1-j}^{N-j}\frac{(\tilde\delta_j+\beta\tilde\delta_{j-1})^2}{\delta_0^2}|a_n|^2+\sum_{n=1}^{N-k}\frac{\beta^2\tilde\delta_{k-1}^2}{\delta_0^2}|a_{n}|^2\right)+C_k\sum_{n=k}^N|f_n|^2, 
\end{equation}
where $C_k$ is a constant depending on $k$. This further yields 
\begin{equation}\label{eq:an-sq-sum-2}
\left(1-k\left(\sum_{j=2}^{k-1}\frac{(\tilde\delta_j+\beta\tilde\delta_{j-1})^2}{\delta_0^2}+\frac{\beta^2\tilde\delta_{k-1}^2}{\delta_0^2}\right)\right)\sum_{n=k+1}^N|a_n|^2\lesssim \sum_{n=1}^{k}|a_n|^2+\sum_{n=k}^N|f_n|^2. 
\end{equation}
With the condition 
\begin{equation}\label{eq:an-sq-cond}
k\left(\sum_{j=2}^{k-1}\frac{(\tilde\delta_j+\beta\tilde\delta_{j-1})^2}{\delta_0^2}+\frac{\beta^2\tilde\delta_{k-1}^2}{\delta_0^2}\right)<1,
\end{equation}
the estimate \eqref{eq:an-sq-sum-2} immediately implies that 
\begin{equation}\label{eq:an-sq-sum-3}
\sum_{n=k+1}^N|a_n|^2\lesssim \sum_{n=1}^{k}|a_n|^2+\sum_{n=k}^N|f_n|^2. 
\end{equation}
Using \eqref{eq:diff-eq-sol-general} for $n=k$, we further derive that  
\begin{equation}\label{eq:an-sq-N=k}
|a_k|^2\lesssim \sum_{n=1}^{k-1}|a_n|^2+|f_k|^2. 
\end{equation}
We combine \eqref{eq:an-sq-sum-3} and \eqref{eq:an-sq-N=k} to obtain \eqref{eq:diff-eq-sol-sharper-estimate}, and the hidden constant depends on $k$. It remains to validate the condition \eqref{eq:an-sq-cond}. By a direct calculation with the values of $\tilde\delta_j$ given in Table \ref{tab:bdf-coefficients}, we verify that \eqref{eq:an-sq-cond} holds for $k=3,4$. 
\end{proof}
}

\shuo{
\begin{remark}
Notably, while \eqref{eq:an-sq-cond} fails for $k=5$, the analysis can be extended by rewriting $f_n + \beta_1 f_{n-1} + \beta_2 f_{n-2}$ (replacing the LHS of \eqref{eq:fn-beta}) to establish \eqref{eq:diff-eq-sol-sharper-estimate}. For conciseness, we restrict our analysis to $k~=~2,3,4$, omitting higher-order cases ($k \geq 5$).

It is worth noting that the validity of Lemmas \ref{lem:diff-eq-sol-general} and \ref{lem:diff-eq-sol-sharper-estimate} extends naturally to the case where $a_n$ and $f_n$ represent sequences of Sobolev functions, with the absolute values $|a_n|$, $|f_n|$ replaced by their corresponding function norms. This extension follows directly from the purely algebraic nature of the underlying arguments and norm-based estimates involved in both results. 
\end{remark}
}

\setcounter{equation}{0}
\section{Constraint violations for BDF-k schemes}\label{sec:cons-vio-bdfn}
\shuo{
In this section, we prove the constraint consistency orders for arbitrary projection-free iterative schemes. Notably, our analysis is independent of specific iterative scheme formulations, relying solely on the linearization \eqref{eq:linear-cons-0} of the quadratic constraint in \eqref{eq:admissible_set} through BDF approximations and extrapolations.    
}



\subsection{\shuo{Key algebraic identity}}\label{sec:roadmap}
The first step in estimating constraint violation involves deriving the crucial algebraic identity \eqref{eq:abstract-identity}. 

\shuo{
\begin{lemma}\label{lem:alg-identity}
    For any sequence of functions $\{a_n\}_{n\in\mathbb{N}}$, BDF order $k\ge1$ and any $n\ge k$, there exists unique coefficients $\beta_{j\ell}$ for $1\le j\le k$ and $0\le \ell\le k-j$ such that 
    \begin{equation}\label{eq:alg-identity}
    \sum_{j=0}^k \delta_ja_{n-j}^2-2\left(\sum_{j=0}^k\delta_ja_{n-j}\right)\left(\sum_{j=0}^{k-1}\gamma_ja_{n-j-1}\right)=\sum_{j=1}^k\sum_{\ell=0}^{k-j} \beta_{j\ell}s^{2j}(d^j_ta_{n-\ell})^2.
    \end{equation}
\end{lemma}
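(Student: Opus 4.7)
My plan is to view both sides of \eqref{eq:alg-identity} as quadratic forms in the consecutive increments $x_i := a_{n-i+1} - a_{n-i}$ ($i=1,\ldots,k$) and to prove existence and uniqueness of the $\beta_{j\ell}$ via a dimension count combined with linear independence of the squares on the right-hand side.

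The first step is to check that the LHS is invariant under simultaneous shifts $a_{n-j} \mapsto a_{n-j} + c$ for all $j=0,\ldots,k$. Using $\sum_{j=0}^{k} \delta_j = 0$ and $\sum_{j=0}^{k-1} \gamma_j = 1$ from \eqref{eq:useful-relation}, the linear correction to $\sum_j \delta_j a_{n-j}^2$ is exactly $2c \sum_j \delta_j a_{n-j}$, while the correction to the cross term $-2\bigl(\sum_j \delta_j a_{n-j}\bigr)\bigl(\sum_j \gamma_j a_{n-j-1}\bigr)$ is $-2c \sum_j \delta_j a_{n-j}$, and these cancel. Hence, under the change of variables $(a_n, \ldots, a_{n-k}) \leftrightarrow (a_n, x_1, \ldots, x_k)$, the LHS does not depend on $a_n$ and is a quadratic form on the space $V := \operatorname{Sym}^2(\mathbb{R}^k)$ of dimension $k(k+1)/2$. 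In the same coordinates, a direct calculation from \eqref{eq:Backward-formula-expression} gives
\[
L_{j,\ell} := s^j d_t^j a_{n-\ell} \;=\; \sum_{i=0}^{j-1}(-1)^i \binom{j-1}{i}\, x_{\ell+1+i},
\]
a linear form supported on the consecutive window $\{x_{\ell+1}, \ldots, x_{\ell+j}\}$. The number of admissible pairs $(j,\ell)$ with $j \ge 1$, $\ell \ge 0$, $j+\ell \le k$ equals $\sum_{j=1}^k (k-j+1) = k(k+1)/2$, matching $\dim V$ exactly.

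The crux is then to show that the $k(k+1)/2$ squares $\{L_{j,\ell}^2\}$ are linearly independent in $V$. I would order the pairs by decreasing $j$ (ties broken by increasing $\ell$) and associate each $L_{j,\ell}^2$ with the \emph{extreme} monomial $x_{\ell+1} x_{\ell+j}$ (which is $x_{\ell+1}^2$ when $j=1$). The coefficient of this monomial in $L_{j,\ell}^2$ is $2(-1)^{j-1}$ when $j \ge 2$ and $1$ when $j=1$, both nonzero. For any later square $L_{j',\ell'}^2$ in the ordering, either $j' < j$ (window too short to span both endpoints) or $j' = j$ and $\ell' > \ell$ (window shifted past $\ell+1$); in either case the window $[\ell'+1, \ell'+j']$ cannot contain both $\ell+1$ and $\ell+j$, so the monomial $x_{\ell+1} x_{\ell+j}$ does not appear in $L_{j',\ell'}^2$. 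The resulting coefficient matrix is therefore upper triangular with nonzero diagonal, proving linear independence. Combined with the dimension count, the squares form a basis of $V$, so the decomposition of the LHS into squares exists and the coefficients $\beta_{j\ell}$ are unique. The main obstacle is the combinatorial bookkeeping in this triangular argument, namely verifying that later squares in the chosen ordering cannot contain the extreme monomial of an earlier square; once this is done, the conclusion is immediate from elementary linear algebra.
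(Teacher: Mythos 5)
Your proof is correct, but it follows a genuinely different route from the paper's. The paper compares coefficients of $a_{n-m}a_{n-p}$ directly in the original variables, obtaining an overdetermined $\tfrac{(k+2)(k+1)}{2}\times\tfrac{(k+1)k}{2}$ linear system; it must then prove $k+1$ explicit redundancy identities among the rows (its Steps (ii)--(iii), which require the combinatorial identities in \eqref{eq:useful-relation} applied twice, once to each side), and finally show the reduced square matrix is lower triangular under a permutation. You instead exploit translation invariance of the left-hand side (which follows from the same identities $\sum_j\delta_j=0$, $\sum_j\gamma_j=1$, used once and up front) to pass to the increment variables $x_i=a_{n-i+1}-a_{n-i}$, where the problem becomes: expand a quadratic form on $\mathbb{R}^k$ in the $\tfrac{k(k+1)}{2}$ squares $L_{j,\ell}^2$, with $L_{j,\ell}$ supported on the window $\{x_{\ell+1},\dots,x_{\ell+j}\}$. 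The dimension count is exact, and your extreme-monomial argument (associating $L_{j,\ell}^2$ with $x_{\ell+1}x_{\ell+j}$, a bijection onto all monomials $x_px_q$, $p\le q$) yields a triangular change-of-basis matrix with nonzero diagonal, so the squares form a basis and both existence and uniqueness follow at once. I verified the key computations: the cancellation of the linear corrections under $a\mapsto a+c$, the window formula $s^jd_t^ja_{n-\ell}=\sum_{i=0}^{j-1}(-1)^i\binom{j-1}{i}x_{\ell+1+i}$, the nonvanishing of the extreme coefficient $2(-1)^{j-1}$ (resp.\ $1$ for $j=1$), and that no later square in your ordering can contain both endpoints $x_{\ell+1}$ and $x_{\ell+j}$. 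What your approach buys is a conceptually cleaner argument that replaces the paper's redundancy bookkeeping with a single invariance observation and reduces immediately to a square, manifestly nonsingular system; the paper's approach, while heavier, produces the explicit linear system \eqref{eq:linear-system-identity} that the authors then solve numerically to obtain the values in Remark \ref{rmk:value-beta}, so it is more directly tied to how the coefficients are computed in practice. Both proofs ultimately rest on a triangularity observation, and both extend verbatim to the bilinear-operator version mentioned after the lemma (yours via polarization of the squares).
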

While requiring only elementary linear algebra and combinatorial identities, the proof contains substantial non-trivial details. 
We defer the proof to Appendix \ref{appendix:appendix-A} to maintain the article's flow, as it involves rather technical arguments. 
Notably, Lemma \ref{lem:alg-identity} remains valid when the quadratic terms of $\{a_n\}_{n\in\mathbb{N}}$ in \eqref{eq:alg-identity} are replaced by arbitrary bilinear operators, as the proof hinges completely on the algebraic structure of bilinearity. 
}

\shuo{
\begin{remark}\label{rmk:value-beta}
    For each BDF-$k$ scheme, we compute the coefficients $\beta_{j\ell}$ by solving the linear system \eqref{eq:linear-system-identity}, which arises in the proof of Lemma \ref{lem:alg-identity}, using the corresponding $\delta_j$ and $\gamma_j$ values from Table \ref{tab:bdf-coefficients}.
    We summarize these values for $k\le4$ as follows. 
    \begin{align}\label{eq:value-beta}
    & k=1:\quad\beta_{10}=1;\quad k=2:\quad\beta_{20}=3/2;\quad k=3:\quad\beta_{30}=11/6,\quad \beta_{21}=-3/2; \\ \nonumber 
    & k=4:\quad\beta_{40}=25/12,\quad\beta_{31}=-10/3,\quad\beta_{21}=-2,\quad\beta_{22}=2.
    \end{align}
    We note that only a few $\beta_{j\ell}$'s are non-zero in each case.
\end{remark}
}

\shuo{
Recalling the definition \eqref{eq:bdf-formula-extrapolation} and \eqref{eq:Backward-formula-expression}, applying Lemma \ref{lem:alg-identity} to $a_n:=\vu^{n}$ and exploiting the fact that $B$ is a symmetric bilinear form, we obtain the following equation for any $n\geq k$ and any $k\ge1$:  
\begin{equation}\label{eq:abstract-identity}
\sum_{j=0}^k \delta_j B(\vu^{n-j},\vu^{n-j})-2sB(\dot{\vu}^n,\hat{\vu}^n)=\sum_{j=1}^k\sum_{\ell=0}^{k-j} \beta_{j\ell}s^{2j}B(d^j_t\vu^{n-\ell},d^j_t\vu^{n-\ell})=:\phi_n.
\end{equation}
Notably, this identity \eqref{eq:abstract-identity} is derived purely algebraically, independent of the iterative methods used for solving \eqref{eq:cons_min}, and is essential to the subsequent analysis.

We let $\{\vu^n\}_{n=k}^{\infty}\subset U$ be a sequence of outputs produced in a BDF-$k$ projection-free iterative scheme. Namely, they satisfy the linearized constraint 
\begin{equation}\label{eq:linear-cons}
B(\dot{\vu}^n,\hat{\vu}^n)=0,
\end{equation}
at each iteration step $n\ge k$. 
We suppose the $k$ initial values $\vu^0,\ldots,\vu^{k-1}$ are given.  
Substituting the coefficients $\delta_j$ from Table~\ref{tab:bdf-coefficients} and $\beta_{j\ell}$ from \eqref{eq:value-beta} into identity \eqref{eq:abstract-identity} yields the algebraic identities as follows. 
When $k=2$, \eqref{eq:abstract-identity} becomes  
\begin{equation}\label{eq:identity-BDF2}
	\frac{3}{2}\B{\vu^n,\vu^n}-2\B{\vu^{n-1},\vu^{n-1}}+\frac{1}{2}\B{\vu^{n-2},\vu^{n-2}}=\frac{3}{2}s^4\B{d_t^2\vu^n,d_t^2\vu^n},
\end{equation}
and for $k=3$ it reads as 
\begin{equation}\label{eq:identity-BDF3}
	\begin{aligned}
		&\frac{11}{6}\B{\vu^n,\vu^n}-3\B{\vu^{n-1},\vu^{n-1}}+\frac32\B{\vu^{n-2},\vu^{n-2}}-\frac13\B{\vu^{n-3},\vu^{n-3}}\\
		&=\frac{11}{6}s^6\B{d_t^3\vu^n,d_t^3\vu^n}-\frac32s^4\B{d_t^2\vu^{n-1},d_t^2\vu^{n-1}},
	\end{aligned}
\end{equation}
and when $k=4$ the identity becomes
{\small
\begin{equation}\label{eq:identity-BDF4}
\begin{aligned}
    &\frac{25}{12}\B{\vu^n,\vu^n}-4\B{\vu^{n-1},\vu^{n-1}}+3\B{\vu^{n-2},\vu^{n-2}}-\frac43\B{\vu^{n-3},\vu^{n-3}}+\frac14\B{\vu^{n-4},\vu^{n-4}}\\
    &=\frac{25}{12}s^8\B{d_t^4\vu^n,d_t^4\vu^n}-\frac{10}{3}s^6\B{d_t^3\vu^{n-1},d_t^3\vu^{n-1}}-2{s^4}\B{d_t^2\vu^{n-1},d_t^2\vu^{n-1}}+2{s^4}\B{d_t^2\vu^{n-2},d_t^2\vu^{n-2}}.
\end{aligned}
\end{equation}
}
}

\subsection{\shuo{Constraint violations}}\label{sec:cons-vio-general}
\shuo{
We denote the constraint violation at each step $n$ as $b^n$, and it is defined by  
\begin{equation}
b_n:=B(\vu^{n},\vu^{n})- \vu_c.
\end{equation}
We aim to establish estimates on $b_n$. 
}

\shuo{
\begin{thm}[Exact expression of constraint violations]\label{thm:bn-expression}
Let $\{\vu^n\}_{n=0}^\infty\subset U$ be a sequence such that the linearized constraint \eqref{eq:linear-cons} holds for any $n\geq k$, where $\dot\vu^n$ and $\hat\vu^n$ are the BDF-$k$ approximation of time derivatives and extrapolation respectively, defined in \eqref{eq:bdf-formula-extrapolation}. Then for every fixed $2\le k\le 6$, we have  
    \begin{equation}\label{eq:diff-eq-sol-2}
    b_N=\sum_{m=1}^{k-1}\left(\tilde\delta_{k-1-m}\sum_{n=k}^N\eta_{N-n}-\sum_{\ell=k-m}^{k-1}\tilde\delta_{\ell}\eta_{N-\ell-m}\right)b_m+\sum_{n=k}^N\eta_{N-n}\sum_{m=k}^n\phi_m,
    \end{equation}
    for $N\geq k$, where $\tilde\delta_{j}$, $\eta_{j}$ and $\phi_m$ are defined in \eqref{eq:def-tilde-delta}, \eqref{eq:taylor-expansion} and \eqref{eq:abstract-identity} respectively.     
\end{thm}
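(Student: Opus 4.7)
The plan is to reduce the statement to a scalar linear inhomogeneous difference equation on $\{b_n\}$ whose coefficients match those appearing in Lemma~\ref{lem:diff-eq-sol-general}, and then to reorganize the Lemma's output into the stated closed form.

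First I would combine the algebraic identity \eqref{eq:abstract-identity} with the linearized constraint \eqref{eq:linear-cons} to drop the cross term $2sB(\dot\vu^n,\hat\vu^n)$, and then subtract $\vu_c$ from every quadratic term using the relation $\sum_{j=0}^k \delta_j=0$ from \eqref{eq:useful-relation}. This reduces the identity to the scalar recursion
\begin{equation*}
\sum_{j=0}^k \delta_j b_{n-j} = \phi_n, \qquad n \geq k.
\end{equation*}
The factorization $\delta(z)=(1-z)\tilde\delta(z)$ translates on the coefficient side into the telescoping identity $\sum_{j=0}^k \delta_j b_{n-j}=F_n-F_{n-1}$, where $F_n:=\sum_{j=0}^{k-1}\tilde\delta_j b_{n-j}$. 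Summing this from $k$ to $n$ yields $F_n=F_{k-1}+\sum_{m=k}^n\phi_m$, i.e.,
\begin{equation*}
\sum_{j=0}^{k-1}\tilde\delta_j b_{n-j} = F_{k-1}+\sum_{m=k}^n \phi_m, \qquad n\geq k.
\end{equation*}

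The key step is to apply Lemma~\ref{lem:diff-eq-sol-general} with $a_n=b_n$ and $f_n=F_{k-1}+\sum_{m=k}^n\phi_m$, which gives
\begin{equation*}
b_N \;=\; -\sum_{m=1}^{k-1}\sum_{\ell=k-m}^{k-1}\tilde\delta_\ell\,\eta_{N-\ell-m}\,b_m \;+\; F_{k-1}\sum_{n=k}^N \eta_{N-n} \;+\; \sum_{n=k}^N \eta_{N-n}\sum_{m=k}^n\phi_m.
\end{equation*}
The last term already appears verbatim in \eqref{eq:diff-eq-sol-2}. The middle term is expanded by reindexing $F_{k-1}=\sum_{j=0}^{k-1}\tilde\delta_j b_{k-1-j}=\sum_{m=0}^{k-1}\tilde\delta_{k-1-m}b_m$. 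Under the natural initialization $\vu^0\in\Ad$, so that $b_0=0$, the $m=0$ contribution drops out; collecting the coefficients of $b_m$ for $1\leq m\leq k-1$ from the two remaining sources then produces exactly $\tilde\delta_{k-1-m}\sum_{n=k}^N\eta_{N-n}-\sum_{\ell=k-m}^{k-1}\tilde\delta_{\ell}\eta_{N-\ell-m}$, matching \eqref{eq:diff-eq-sol-2}.

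The main obstacle I anticipate is the index bookkeeping in the final assembly step: one must carefully align the inner summation range $\ell=k-m,\ldots,k-1$ coming from Lemma~\ref{lem:diff-eq-sol-general} with the reindexed form of $F_{k-1}$, and verify that the two contributions to the coefficient of each $b_m$ combine into the single stated expression. Once this alignment is done, all other steps are direct algebraic consequences of the factorization $\delta(z)=(1-z)\tilde\delta(z)$ together with the solution formula for linear difference equations provided by Lemma~\ref{lem:diff-eq-sol-general}.
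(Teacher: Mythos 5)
Your proposal is correct and follows essentially the same route as the paper: the paper introduces the auxiliary variable $z_n:=\sum_{j=0}^{k-1}\tilde\delta_j B(\vu^{n-j},\vu^{n-j})-\vu_c$, which coincides with your $F_n$, telescopes it via $z_n-z_{n-1}=\phi_n$, and then applies Lemma~\ref{lem:diff-eq-sol-general} with $f_n=z_n$ exactly as you do. Your explicit remark that $b_0=0$ (from $\vu^0\in\Ad$) is needed to drop the $m=0$ contribution of $\tilde\delta_{k-1}b_0\sum_{n=k}^N\eta_{N-n}$ is a point the paper leaves implicit, so the bookkeeping you flag as the main obstacle is handled correctly.
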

\begin{proof}
We first introduce an auxiliary variable for $n\geq k-1$: 
\begin{equation}\label{eq:def-zn-general}
z_n:=\sum_{j=0}^{k-1} \tilde\delta_j B(\vu^{n-j},\vu^{n-j}) - \vu_c,
\end{equation}
where coefficients are defined in \eqref{eq:def-tilde-delta},  
and it satisfies the following relation for $n\geq k$:
\begin{equation}\label{eq:zn-zn1}
z_n-z_{n-1}=\sum_{j=0}^k \delta_j B(\vu^{n-j},\vu^{n-j}).
\end{equation}
Substituting \eqref{eq:linear-cons} into \eqref{eq:abstract-identity} and using the fact that $\sum_{j=0}^k \delta_j=0$ yield 
\begin{equation}\label{eq:difference-equation}
z_n-z_{n-1}=\sum_{j=0}^k \delta_j b_{n-j}=\phi_n:=\sum_{j=1}^k\sum_{\ell=0}^{k-j} \beta_{j\ell}s^{2j}B(d^j_t\vu^{n-\ell},d^j_t\vu^{n-\ell}),
\end{equation}
for all $n\ge k$. 
Therefore, summing \eqref{eq:difference-equation} over $n$ from $k$ to $N$, we obtain an explicit expression of $z_N$ with $N\geq k$ as 
\begin{equation}\label{eq:express-zN}
z_N=z_{k-1}+\sum_{n=k}^N\phi_n.
\end{equation}
By \eqref{eq:sum-tilde-delta}, we rewrite \eqref{eq:def-zn-general} as  
\begin{equation}\label{eq:zn-diff-eq}
\sum_{j=0}^{k-1} \tilde\delta_j b_{n-j}=z_n,
\end{equation}
with $z_n$ given as \eqref{eq:express-zN}. 
Applying Lemma \ref{lem:diff-eq-sol-general} to the linear inhomogeneous difference equation \eqref{eq:zn-diff-eq}, we obtain an exact expression of $b_N$ as \begin{equation}\label{eq:diff-eq-sol}
    b_N=-\sum_{m=1}^{k-1}\sum_{\ell=k-m}^{k-1}\tilde\delta_{\ell}\eta_{N-\ell-m}b_m+\sum_{n=k}^N\eta_{N-n}z_n,
    \end{equation}
for all $N\ge k$. Then substituting \eqref{eq:express-zN} into \eqref{eq:diff-eq-sol} and using \eqref{eq:zn-diff-eq} for $n=k-1$, we conclude with the expression \eqref{eq:diff-eq-sol-2}.   
\end{proof}
}


\shuo{
We note that the first term in RHS of \eqref{eq:diff-eq-sol-2} is a linear combination of initial values of constraint violations $b_1,\ldots,b_{k-1}$, with coefficients completely determined by BDF coefficients $\delta_0,\ldots,\delta_k$. 
The second term is a linear combination of quadratic terms of discrete time derivatives $d^j_t\vu^{n-\ell}$. 
Theorem \ref{thm:bn-expression} provides an exact expression of constraint violations $b_N$, from which we derive upper bounds of $\|b_N\|_Z$ in the next steps.    
}

\shuo{
\begin{remark}[Initialization and estimates]
    Theorem~\ref{thm:bn-expression} requires \eqref{eq:linear-cons} to hold for $\vu^n$ ($n \geq k$), while the initial values $\vu^1,\ldots,\vu^{k-1}$ influencing the first term in $b_N$ remain unspecified. 
    In practice, we construct initial states $\vu^p$ ($p=1,\ldots,k-1$) to satisfy the BDF-$p$ version of \eqref{eq:linear-cons}:
    \begin{equation}\label{eq:linear-cons-p}
    B(\dot{\vu}^p,\hat{\vu}^p) = 0,
    \end{equation}
    where $\dot{\vu}^p$ and $\hat{\vu}^p$ are the BDF-$p$ approximations of time derivatives and extrapolations respectively from \eqref{eq:bdf-formula-extrapolation} (with $k=p$ in \eqref{eq:bdf-formula-extrapolation}), using the coefficients $\{\delta_j\}_{j=0}^p$ and $\{\gamma_j\}_{j=0}^{p-1}$ defined in \eqref{eq:def-bdf-k-coeff} with $k=p$.
    
    Therefore, in order to obtain the initial values $b_p:=B(\vu^p,\vu^p)-c$ for $p=1,\ldots,k-1$, we apply \eqref{eq:diff-eq-sol-2} to $k=N=p$, and use it recursively for $p=k-1,\ldots,1$. Recalling the definition \eqref{eq:difference-equation} of $\phi_n$, taking norm in $Z$ and applying \eqref{eq:B-upper}, we conclude that 
    \begin{equation}\label{eq:diff-eq-initial}
    \|b_{p}\|_Z\leq\sum_{j=1}^p\sum_{\ell=0}^{p-j} C_{j\ell}s^{2j}\|d^j_t\vu^{p-\ell}\|_U^2, 
    \end{equation}
    where $p=1,\ldots,k-1$ and non-negative constants $C_{j\ell}$ are determined by coefficients of BDF approximations of orders $1,\ldots,p$ (as $\beta_{j\ell}$ appearing in \eqref{eq:difference-equation} and the Taylor expansion coefficients $\eta_j$ both depend on the coefficients of utilized BDF methods).   
\end{remark}
}

\shuo{
We can further exchange the sum in the second term in \eqref{eq:diff-eq-sol-2} as 
\begin{equation}\label{eq:sum-exchange}
\sum_{n=k}^N\eta_{N-n}\sum_{m=k}^n\phi_m=\sum_{m=k}^N\phi_m\left(\sum_{n=m}^N\eta_{N-n}\right). 
\end{equation}
Combining this with the identity \eqref{eq:diff-eq-sol-2}, the estimates \eqref{eq:diff-eq-initial} for initial values and the uniform boundedness \eqref{eq:sum_gamma_bdd}, we derive upper bounds of constraint violations for any BDF-k projection-free methods with $k\le6$ in the following form: 
\begin{equation}\label{eq:cons-upper-general}
\|b_N\|_Z\lesssim \sum_{(j,n)\in\mathcal{J}}s^{2j}\|d^j_t \vu^{n}\|_U^2,
\end{equation}
where $\mathcal{J}\subset\mathbb{N}\times\mathbb{N}$ is an index set. 
We emphasize that no specific iterative schemes have been utilized except for the BDF-k approximated linearized constraint \eqref{eq:linear-cons}, which plays a central role in our analysis. This shows that the proposed methodology is applicable to any projection-free iterative method based on the linearized constraint \eqref{eq:linear-cons}.
}

\shuo{
Building on Theorem~\ref{thm:bn-expression} and the subsequent analysis, we now present explicit constraint violation estimates in the form of \eqref{eq:cons-upper-general} for cases $k=2,3,4$.
\begin{cor}[Case $k=2$]\label{cor:BDF2-cons-vio}
    Suppose $k=2$. Let $\{\vu^n\}_{n=0}^\infty\subset U$ be a sequence such that the BDF-$2$ version of the linearized constraint \eqref{eq:linear-cons} holds for any $n\geq 2$ and the BDF-$1$ version \eqref{eq:linear-cons-p} ($p=1$) holds for $n=1$. 
    Then, the following estimate is valid: 
    \begin{equation}\label{eq:bN-upper with initial velocity}
        \norm{\B{\vu^N,\vu^N}-\vu_c}_Z\lesssim s^2\norm{d_t\vu^0}_U^2+s^4\sum_{n=1}^N\norm{d_t^2\vu^n}_U^2.
    \end{equation}
    If we further assume $d_t\vu^0=\bz$, we have a conditional estimate for constraint violations as follows.
    Suppose for any $N>0$ and $s>0$ the following discrete regularity is satisfied with a constant $c_r>0$: 
    \begin{equation}\label{eq:discrete regularity}
        s\sum_{n=1}^{N}\norm{d_t^2\vu^n}_U^2\leq c_{r}.
    \end{equation}
    Then the constraint violation is of $\order{s^3}$ as 
    \begin{equation}\label{eq:constraint violation O(s^3)}
        \norm{\B{\vu^N,\vu^N}-\vu_c}_Z\lesssim s^3.
    \end{equation}
    \end{cor}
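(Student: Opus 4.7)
The plan is to specialize Theorem~\ref{thm:bn-expression} to $k=2$. Here only $m=1$ appears in the first sum of the exact expression \eqref{eq:diff-eq-sol-2}, so the coefficient multiplying the initial violation $b_1$ reduces to $\tilde\delta_0\sum_{n=2}^N\eta_{N-n}-\tilde\delta_1\eta_{N-2}=\tfrac{3}{2}\sum_{n=2}^N\eta_{N-n}+\tfrac{1}{2}\eta_{N-2}$, which is uniformly bounded in $N$ via a change of summation index together with \eqref{eq:sum_gamma_bdd} and the exponential decay of $\eta_j$ implicit in \eqref{eq:gamma-sol}. By Remark~\ref{rmk:value-beta}, the source reduces to $\phi_n=\tfrac{3}{2}s^4 B(d_t^2\vu^n,d_t^2\vu^n)$.

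Next I would swap the order of summation as in \eqref{eq:sum-exchange}, rewriting the second term of \eqref{eq:diff-eq-sol-2} as $\sum_{m=2}^N\phi_m\bigl(\sum_{n=m}^N\eta_{N-n}\bigr)$; each weight $\sum_{n=m}^N\eta_{N-n}$ is again uniformly bounded by \eqref{eq:sum_gamma_bdd}. Taking the $Z$-norm, using the triangle inequality, and invoking Assumption~\ref{assum:B} to replace $\|B(d_t^2\vu^n,d_t^2\vu^n)\|_Z$ by a constant multiple of $\|d_t^2\vu^n\|_U^2$, I would arrive at the skeleton bound
\begin{equation*}
\|b_N\|_Z\lesssim\|b_1\|_Z+s^4\sum_{n=2}^N\|d_t^2\vu^n\|_U^2.
\end{equation*}
The initial contribution is then controlled through \eqref{eq:diff-eq-initial} at $p=1$: under the BDF-$1$ initialization \eqref{eq:linear-cons-p} used to produce $\vu^1$, this gives $\|b_1\|_Z\lesssim s^2\|d_t\vu^0\|_U^2$. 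Combining these delivers the unconditional estimate \eqref{eq:bN-upper with initial velocity}.

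For the conditional $\order{s^3}$ bound, the extra hypothesis $d_t\vu^0=\bz$ makes the initial term vanish (so that $b_1=\bz$ and the first summand above drops out), and it remains only to factor the residual contribution as $s^4\sum_{n=1}^N\|d_t^2\vu^n\|_U^2=s^3\cdot\bigl(s\sum_{n=1}^N\|d_t^2\vu^n\|_U^2\bigr)$ and invoke the discrete regularity \eqref{eq:discrete regularity} to conclude \eqref{eq:constraint violation O(s^3)}. I expect no serious analytical obstacle: the essential work has already been absorbed into Theorem~\ref{thm:bn-expression}, Assumption~\ref{assum:B}, and the boundedness \eqref{eq:sum_gamma_bdd}. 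The only point requiring care is the bookkeeping of the initial step, namely verifying that the BDF-$1$ initialization actually produces $\|b_1\|_Z\lesssim s^2\|d_t\vu^0\|_U^2$ with the correct $s^2$ scaling that vanishes under the hypothesis $d_t\vu^0=\bz$.
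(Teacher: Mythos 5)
Your overall route is the same as the paper's: specialize Theorem~\ref{thm:bn-expression} to $k=2$, note that $\beta_{20}=3/2$ is the only nonzero coefficient so $\phi_n=\tfrac32 s^4 B(d_t^2\vu^n,d_t^2\vu^n)$, exchange the order of summation via \eqref{eq:sum-exchange}, invoke the uniform bound \eqref{eq:sum_gamma_bdd} together with Assumption~\ref{assum:B}, and control the initial violation $b_1$ through \eqref{eq:diff-eq-initial}. Up to the skeleton bound $\|b_N\|_Z\lesssim\|b_1\|_Z+s^4\sum_{n=2}^N\|d_t^2\vu^n\|_U^2$, your argument coincides with the paper's.

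The one step that would fail is your treatment of $b_1$. The BDF-$1$ initialization \eqref{eq:linear-cons-p} gives $B(d_t\vu^1,\vu^0)=0$, hence $b_1=s^2B(d_t\vu^1,d_t\vu^1)$ and, by \eqref{eq:diff-eq-initial} with $p=1$, $\|b_1\|_Z\lesssim s^2\|d_t\vu^1\|_U^2$ --- the bound involves $d_t\vu^1$, not $d_t\vu^0$. In particular $b_1$ does \emph{not} vanish when $d_t\vu^0=\bz$: the first iterate still moves, so $d_t\vu^1\neq\bz$ in general. Your claims that $\|b_1\|_Z\lesssim s^2\|d_t\vu^0\|_U^2$ and that $b_1=\bz$ under the hypothesis $d_t\vu^0=\bz$ are therefore incorrect as stated, and this is exactly the ``bookkeeping of the initial step'' you flagged as the only delicate point. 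The fix is the one the paper uses: write $d_t\vu^1=d_t\vu^0+sd_t^2\vu^1$, so that $\|d_t\vu^1\|_U^2\le 2\|d_t\vu^0\|_U^2+2s^2\|d_t^2\vu^1\|_U^2$; this splits $\|b_1\|_Z$ into the $s^2\|d_t\vu^0\|_U^2$ term of \eqref{eq:bN-upper with initial velocity} plus an $s^4\|d_t^2\vu^1\|_U^2$ contribution, which is precisely why the sum in \eqref{eq:bN-upper with initial velocity} starts at $n=1$ rather than $n=2$. With this correction the remainder of your argument (factoring $s^4=s^3\cdot s$ and invoking the discrete regularity \eqref{eq:discrete regularity}) goes through and yields \eqref{eq:constraint violation O(s^3)}.
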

    \begin{proof}
    We first note that the only nonzero element of $\{\beta_{j\ell}\}$ is $\beta_{20}$, according to \eqref{eq:value-beta}. 
    We then derive an estimate for the constraint violation from the exact expression \eqref{eq:diff-eq-sol-2} for the case $k=2$ as follows.  
    By applying the summation exchange technique \eqref{eq:sum-exchange}, utilizing the estimate \eqref{eq:diff-eq-initial} for initial states, and incorporating the uniform boundedness \eqref{eq:sum_gamma_bdd} of Taylor coefficients, we derive
    \begin{equation}\label{eq:cons-upper-BDF2}
        \|b_N\|_Z\lesssim \sum_{n=2}^Ns^{4}\|d^2_t \vu^{n}\|_U^2+s^2\|d_t \vu^{1}\|_U^2. 
    \end{equation}
    Since $d_t\vu^1=d_t\vu^0+sd_t^2\vu^1$, it is clear that $\norm{d_t\vu^1}_U^2\leq 2\norm{d_t\vu^0}_U^2+2s^2\norm{d_t^2\vu^1}_U^2$. Therefore, we can rewrite the estimate \eqref{eq:cons-upper-BDF2} into \eqref{eq:bN-upper with initial velocity}. 
    Moreover, using $d_t\vu^0=\bz$ and the discrete regularity assumption \eqref{eq:discrete regularity} immediately leads to \eqref{eq:constraint violation O(s^3)}.   
    \end{proof}
}

\shuo{Apparently, if $d_t\vu^0\neq\mathbf{0}$, then the term $s^2\norm{d_t\vu^0}_U^2$ in \eqref{eq:bN-upper with initial velocity} limits the upper bound of the constraint violation so that it cannot be better than $\order{s^2}$. This analysis also justifies the choice of zero initial velocity $d_t\vu^0=\mathbf{0}$ throughout the paper.}

\shuo{
\begin{cor}[Case $k=3$]\label{cor:BDF3-cons-vio}
Suppose $k=3$. Let $\{\vu^n\}_{n=0}^\infty\subset U$ be a sequence such that the BDF-$3$ version of linearized constraint \eqref{eq:linear-cons} holds for any $n\geq 3$ and the BDF-$p$ version \eqref{eq:linear-cons-p} with $p=1,2$ holds for $n=1,2$ respectively. Assume $d_t\vu^0=\bz$.  
Then, the following estimate is valid: 
   \begin{equation}\label{eq:bn-upper-BDF3-simpler version}
	\norm{\B{\vu^N,\vu^N}-\vu_c}_Z\lesssim s^4\sum_{n=1}^N\norm{d_t^2\vu^n}_U^2.
   \end{equation}
Moreover, under the discrete regularity assumption \eqref{eq:discrete regularity}, the constraint violation admits the conditional estimate $\order{s^3}$ as in \eqref{eq:constraint violation O(s^3)}.
\end{cor}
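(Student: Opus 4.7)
The plan is to mimic the proof of Corollary \ref{cor:BDF2-cons-vio}, but now applied to the case $k=3$ where the explicit identity \eqref{eq:identity-BDF3} shows that only $\beta_{30}=11/6$ and $\beta_{21}=-3/2$ are nonzero in \eqref{eq:value-beta}. Starting from the exact expression of $b_N$ given by Theorem \ref{thm:bn-expression} with $k=3$, I would first rewrite the double sum $\sum_{n=3}^N\eta_{N-n}\sum_{m=3}^n\phi_m$ via the exchange \eqref{eq:sum-exchange} and invoke the uniform boundedness \eqref{eq:sum_gamma_bdd} of partial sums of $\{\eta_j\}$ to reduce the estimate to a single sum $\sum_{m=3}^N\|\phi_m\|_Z$. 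Using Assumption \ref{assum:B} (the bound \eqref{eq:B-upper}) on each term of $\phi_m$, this yields a control of the form $\sum_{m=3}^N(s^6\|d_t^3\vu^m\|_U^2 + s^4\|d_t^2\vu^{m-1}\|_U^2)$.

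The main obstacle is that the resulting bound still contains the third-order discrete derivative term $s^6\|d_t^3\vu^m\|_U^2$, which does not appear on the right-hand side of \eqref{eq:bn-upper-BDF3-simpler version}. To eliminate it, I would exploit the elementary identity $s\,d_t^3\vu^m = d_t^2\vu^m-d_t^2\vu^{m-1}$, which gives
\begin{equation*}
s^6\|d_t^3\vu^m\|_U^2 = s^4\|d_t^2\vu^m-d_t^2\vu^{m-1}\|_U^2 \lesssim s^4\bigl(\|d_t^2\vu^m\|_U^2+\|d_t^2\vu^{m-1}\|_U^2\bigr).
\end{equation*}
This trades one order of $s$ in the time step for an extra factor of $\|d_t^2\vu\|_U$, reducing everything to the $s^4\|d_t^2\vu^n\|_U^2$ scale, and the summation over $m$ absorbs neighboring indices into $\sum_{n=2}^N\|d_t^2\vu^n\|_U^2$.

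Next I would treat the initial terms $b_1$ and $b_2$ appearing in \eqref{eq:diff-eq-sol-2}. Applying \eqref{eq:diff-eq-initial} with $p=1$ and $p=2$ bounds $\|b_1\|_Z \lesssim s^2\|d_t\vu^1\|_U^2$ and $\|b_2\|_Z \lesssim s^2(\|d_t\vu^1\|_U^2+\|d_t\vu^2\|_U^2) + s^4\|d_t^2\vu^2\|_U^2$. Under the hypothesis $d_t\vu^0=\bz$, telescoping $d_t\vu^n = \sum_{j=1}^n sd_t^2\vu^j$ converts $s^2\|d_t\vu^p\|_U^2$ into $s^4$ times a sum of $\|d_t^2\vu^j\|_U^2$, so the initial contributions are absorbed into $s^4\sum_{n=1}^N\|d_t^2\vu^n\|_U^2$ as well. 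Combining these pieces with the bound on $\sum\|\phi_m\|_Z$ gives exactly \eqref{eq:bn-upper-BDF3-simpler version}.

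Finally, applying the discrete regularity assumption \eqref{eq:discrete regularity} to the right-hand side, I would write $s^4\sum_{n=1}^N\|d_t^2\vu^n\|_U^2 = s^3\cdot s\sum_{n=1}^N\|d_t^2\vu^n\|_U^2 \leq c_r s^3$, which yields the conditional $\order{s^3}$ bound \eqref{eq:constraint violation O(s^3)} and completes the proof.
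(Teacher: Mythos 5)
Your proposal is correct and follows essentially the same route as the paper's proof: the exact expression from Theorem \ref{thm:bn-expression} combined with the sum exchange \eqref{eq:sum-exchange}, the uniform bound \eqref{eq:sum_gamma_bdd}, the reduction $s^{2}\norm{d_t^3\vu^m}_U^2\leq 2\norm{d_t^2\vu^m}_U^2+2\norm{d_t^2\vu^{m-1}}_U^2$, the treatment of $b_1,b_2$ via \eqref{eq:diff-eq-initial} and $d_t\vu^0=\bz$, and finally the regularity assumption \eqref{eq:discrete regularity}. The only cosmetic difference is your slightly looser bound on $\|b_2\|_Z$ (retaining an $s^2\|d_t\vu^2\|_U^2$ term), which you correctly absorb by telescoping over the fixed indices $p=1,2$.
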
 
\begin{proof}
For the case $k=3$, the only nonzero elements in $\{\beta_{j\ell}\}$ are $\beta_{30}$ and $\beta_{21}$ as in \eqref{eq:value-beta}. Following the methodology employed in Corollary~\ref{cor:BDF2-cons-vio}, we combine equations \eqref{eq:diff-eq-sol-2}, \eqref{eq:sum-exchange}, \eqref{eq:diff-eq-initial}, and \eqref{eq:sum_gamma_bdd} to derive
\begin{equation}\label{eq:cons-upper-BDF3}
    \|b_N\|_Z\lesssim \sum_{n=3}^N(s^{4}\|d^2_t \vu^{n-1}\|_U^2+s^{6}\|d^3_t \vu^{n}\|_U^2)+s^2\|d_t \vu^{1}\|_U^2+s^4\|d^2_t \vu^{2}\|_U^2. 
\end{equation}
Recalling $d_t\vu^1=sd_t^2\vu^1$ with zero initial velocity and using the estimate $s^{2}\norm{d_t^3\vu^n}_U^2\leq 2\norm{d_t^2\vu^n}_U^2+2\norm{d_t^2\vu^{n-1}}_U^2$, we further simplify the estimate to \eqref{eq:bn-upper-BDF3-simpler version}. The conditional estimate $\order{s^3}$ of constraint violations is readily obtained after employing \eqref{eq:discrete regularity} in \eqref{eq:bn-upper-BDF3-simpler version}.  
\end{proof}
}

\shuo{
\begin{cor}[Case $k=4$]\label{cor:BDF4-cons-vio}
Suppose $k=4$. Let $\{\vu^n\}_{n=0}^\infty\subset U$ be a sequence such that the BDF-$4$ version of linearized constraint \eqref{eq:linear-cons} holds for any $n\geq 4$ and the BDF-$p$ version \eqref{eq:linear-cons-p} with $p=1,2,3$ holds for $n=1,2,3$ respectively. Assume $d_t\vu^0=\bz$.  
Then, the following estimate is valid: 
  \begin{equation}\label{eq:bn-upper-BDF4-simpler version}
	\norm{\B{\vu^N,\vu^N}-\vu_c}_Z\lesssim s^6\sum_{n=3}^N\norm{d_t^3\vu^n}_U^2+s^4\max_{1\leq n\leq N-1}\|d^2_t \vu^{n}\|_U^2.
\end{equation}
Moreover, under the discrete regularity assumption 
\begin{equation}\label{eq:discrete-regularity-BDF4}
\sum_{n=3}^N s^{2} \|d^3_t \vu^{n}\|_U^2 \leq C, \text{ and }\max_{1\leq n\leq N-1}\|d^2_t \vu^{n}\|_U^2\leq C,
\end{equation}
the constraint violation admits the conditional estimate $\order{s^4}$ as 
 \begin{equation}\label{eq:constraint violation O(s^4)}
        \norm{\B{\vu^N,\vu^N}-\vu_c}_Z\lesssim s^4.
    \end{equation}
\end{cor}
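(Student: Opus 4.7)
The plan is to follow the methodology of Corollaries \ref{cor:BDF2-cons-vio} and \ref{cor:BDF3-cons-vio}: apply Theorem \ref{thm:bn-expression} with $k=4$, together with the summation exchange \eqref{eq:sum-exchange} and the uniform boundedness \eqref{eq:sum_gamma_bdd}. Setting $C_m:=\sum_{n=m}^{N}\eta_{N-n}$, this yields a decomposition $b_N = \sum_{m=1}^{3}\alpha_m b_m + \sum_{m=4}^{N} C_m \phi_m$, where the coefficients $\alpha_m$ are uniformly bounded (a finite combination of $\tilde\delta_\ell$ and bounded sums of $\{\eta_j\}$) and $\phi_m$ is given by the right-hand side of the $k=4$ algebraic identity \eqref{eq:identity-BDF4}, with the four nonzero coefficients $\beta_{40},\beta_{31},\beta_{21},\beta_{22}$ read off from \eqref{eq:value-beta}.

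The structural feature specific to $k=4$ is that $\beta_{21}+\beta_{22}=0$, so the $s^4$-contribution to $\phi_m$ takes the telescoping form $2s^4\bigl[B(d_t^2\vu^{m-2},d_t^2\vu^{m-2})-B(d_t^2\vu^{m-1},d_t^2\vu^{m-1})\bigr]$. I would apply Abel summation to this part, using $C_{m+1}-C_m=-\eta_{N-m}$, which leaves two endpoint terms (at $m=4$ and $m=N$) plus a residual $\sum_{m=4}^{N-1}\eta_{N-m}\,B(d_t^2\vu^{m-1},d_t^2\vu^{m-1})$. Invoking \eqref{eq:B-upper}, this residual is controlled in $Z$ by $c_Z\bigl(\sum_{j\ge 0}|\eta_j|\bigr)\max_{1\le m\le N-1}\|d_t^2\vu^m\|_U^2$. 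Absolute summability of $\{\eta_j\}$ follows from the representation \eqref{eq:gamma-sol}, since every root $r_i$ satisfies $|r_i|>1$, forcing $|\eta_j|$ to decay geometrically. Hence the total $s^4$-order contribution is bounded by $s^4\max_{1\le m\le N-1}\|d_t^2\vu^m\|_U^2$.

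For the $s^6\|d_t^3\|_U^2$ and $s^8\|d_t^4\|_U^2$ contributions in $\phi_m$, no telescoping is required: \eqref{eq:B-upper} together with the uniform boundedness of $C_m$ gives a direct pointwise bound, and the elementary inequality $s^2\|d_t^4\vu^m\|_U^2\le 2\|d_t^3\vu^m\|_U^2+2\|d_t^3\vu^{m-1}\|_U^2$ absorbs the $s^8\|d_t^4\|_U^2$ term into $s^6\sum_{n=3}^N\|d_t^3\vu^n\|_U^2$. The initial-value contributions $\sum_{m=1}^{3}\alpha_m b_m$ are bounded using \eqref{eq:diff-eq-initial} recursively for $p=1,2,3$; the assumption $d_t\vu^0=\bz$ then rewrites each $\|d_t\vu^p\|_U$ in terms of higher-order differences, giving $\|b_p\|_Z\lesssim s^4\max_{1\le n\le 3}\|d_t^2\vu^n\|_U^2$. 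Combining all contributions produces \eqref{eq:bn-upper-BDF4-simpler version}, and the conditional $\order{s^4}$ estimate \eqref{eq:constraint violation O(s^4)} is then immediate under \eqref{eq:discrete-regularity-BDF4}. The principal obstacle distinguishing this proof from the $k=2,3$ cases is the correct Abel-summation treatment of the telescoping $d_t^2$ part, together with the absolute summability of $\{\eta_j\}$ it requires; this is precisely what produces the $\max$-type bound in \eqref{eq:bn-upper-BDF4-simpler version} rather than the sum bound seen in Corollaries \ref{cor:BDF2-cons-vio} and \ref{cor:BDF3-cons-vio}.
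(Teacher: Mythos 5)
Your proposal is correct and follows essentially the same route as the paper: the paper likewise isolates the $\beta_{21},\beta_{22}$ terms, exploits $\beta_{22}=-\beta_{21}$ to telescope the $s^4$ contribution inside $\sum_{m=4}^n\phi_m$ \emph{before} summing against $\eta_{N-n}$ (which is algebraically identical to your exchange-then-Abel-summation organization), applies the sum exchange only to the $\beta_{40},\beta_{31}$ terms, and absorbs $s^8\|d_t^4\vu^m\|_U^2$ into the $d_t^3$ sum exactly as you do. Your explicit appeal to the absolute summability of $\{\eta_j\}$ (geometric decay from \eqref{eq:gamma-sol}) to obtain the $\max$-type bound is a point the paper leaves implicit behind \eqref{eq:sum_gamma_bdd}, but it is needed and correct.
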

\begin{proof}
For $k=4$, the only nonzero elements of $\{\beta_{j\ell}\}$ are $\beta_{40}$, $\beta_{31}$, $\beta_{21}$ and $\beta_{22}$ as in \eqref{eq:value-beta}. In particular, $\beta_{22}=-\beta_{21}=2$ in \eqref{eq:identity-BDF4}, which leads to a telescopic cancellation in parts of $\sum_{m=4}^n\phi_m$, namely 
\begin{align}\label{eq:telescopic-cancellation}
&\sum_{m=4}^n-2s^4B(d^2_t\vu^{m-1},d^2_t\vu^{m-1})+2s^4B(d^2_t\vu^{m-2},d^2_t\vu^{m-2}) \\ \nonumber
&=-2s^4B(d^2_t\vu^{n-1},d^2_t\vu^{n-1})+2s^4B(d^2_t\vu^{2},d^2_t\vu^{2}).
\end{align}
Notably, in order to exploit the cancellation as in \eqref{eq:telescopic-cancellation}, we do not use the exchange of sum \eqref{eq:sum-exchange} for the terms with coefficients $\beta_{21}$ and $\beta_{22}$. We still apply \eqref{eq:sum-exchange} to the other terms with coefficients $\beta_{40}$ and $\beta_{31}$.     
Together with the estimates \eqref{eq:diff-eq-sol-2}, \eqref{eq:diff-eq-initial} and \eqref{eq:sum_gamma_bdd}, this leads to
\begin{align}\label{eq:cons-upper-BDF4}
    \|b_N\|_Z &\lesssim \sum_{n=4}^N(s^{6}\|d^3_t \vu^{n-1}\|_U^2+s^{8}\|d^4_t \vu^{n}\|_U^2)+s^2\|d_t \vu^{1}\|_U^2+s^4\|d^2_t \vu^{2}\|_U^2+s^6\|d^3_t \vu^{3}\|_U^2 \\ \nonumber
    &+\max_{4\leq n\leq N}s^4\|d^2_t \vu^{n-1}\|_U^2.
\end{align}
Noting that $d_t\vu^1=sd_t^2\vu^1$, $s^{2}\norm{d_t^3\vu^3}_U^2\leq 2\norm{d_t^2\vu^3}_U^2+2\norm{d_t^2\vu^{2}}_U^2$, and $s^{2}\norm{d_t^4\vu^n}_U^2\leq 2\norm{d_t^3\vu^n}_U^2+2\norm{d_t^3\vu^{n-1}}_U^2$, 
we further simplify the estimate to \eqref{eq:bn-upper-BDF4-simpler version}. 
Moreover, substituting \eqref{eq:discrete-regularity-BDF4} into \eqref{eq:bn-upper-BDF4-simpler version} yields \eqref{eq:constraint violation O(s^4)}.   
\end{proof}
}

\shuo{
\begin{remark}[Discrete regularity assumptions]\label{rmk:bn-sq}
Formally, \eqref{eq:discrete regularity} represents a discrete analogue of the regularity condition $\partial_{tt}\vu\in L^2(0,T;U)$, where $T:=Ns$ and $\vu^n\sim \vu(ns)$, in the regime as $s\to0$. This is the optimal bound achievable: from \eqref{eq:bN-upper with initial velocity}, $\order{s^4}$ convergence would demand uniform boundedness of $\sum_{n=1}^N\norm{d_t^2\vu^n}_U^2$, which typically diverges as $s\to0$.

Notably, the constraint violation estimates for BDF-2 and BDF-3 differ only by a constant, suggesting no inherent advantage of BDF-3 over BDF-2 in constraint consistency. Moreover, \eqref{eq:discrete-regularity-BDF4} approximates the conditions $s^{1/2}\partial_{ttt}\vu\in L^2(0,T;U)$ and $\partial_{tt}\vu\in L^{\infty}(0,T;U)$ asymptotically.

Section~\ref{sec:numerical results} will demonstrate that our BDF-$k$ accelerated gradient flow methods (that will be discussed in the subsequent sections) achieve the predicted $\order{s^3}$ (BDF-2,3) and $\order{s^4}$ (BDF-4) constraint violations, confirming that \eqref{eq:discrete regularity} and \eqref{eq:discrete-regularity-BDF4} hold for our test cases.
\end{remark}
}

\section{\shuo{Accelerated flows with BDF schemes and linearized constraints}}\label{sec:BDF2}
\setcounter{equation}{0}
\subsection{\shuo{BDF-1 version and its properties}}\label{sec:BDF1}
\shuo{We first introduce the BDF-1 projection-free accelerated gradient flow method to solve the constrained minimization problem \eqref{eq:cons_min}.
Recall that for the BDF-1 approximation, $\dot\vu^n=d_t\vu^n=s^{-1}(\vu^n-\vu^{n-1})$ and $\hat\vu^n=\vu^{n-1}$. 
Motivated by the second order dynamics \eqref{eq:second order flow problem}, we propose the iterative scheme as follows. Given $\vu^0\in\mathcal{A}$, for $n\ge1$ we compute $d_t\vu^n\in\mathcal{F}_{\vu^{n-1}}$ such that 
 \begin{equation}\label{eq:discrete_flow}
			\innerprocuct{d_t^2\vu^n,\vw}_U+\frac{\alpha}{t_n}\innerprocuct{d_t\vu^n,\vw}_U+\mathcal{M} (\vu^{n-1}+sd_t\vu^n,\vw)=0,
\end{equation}
for all $\vw\in\mathcal{F}_{\vu^{n-1}}$, and update $\vu^n=\vu^{n-1}+sd_t\vu^n$. We take $t_n:=ns$ and the initial velocity $d_t\vu^0=\bz$. 
Notably, the first step of \eqref{eq:discrete_flow} with $n=1$ is a step of gradient flow with an additional constant $(1+\alpha)/s$ multiplied to the flow metric term $\innerprocuct{d_t\vu^n,\vw}_U$.    
We summarize the proposed method in Algorithm \ref{Algorithm 1}.  
}
  
    \begin{algorithm}
		\leavevmode\newline(0)\textbf{Input}: $\vu^0\in\mathcal{A}$, $d_t\vu^0=\bz$, $s>0$, $\alpha\ge3$, $\varepsilon>0$ and a $T_{max}>0$ that is sufficiently large. \textbf{Set} $n=1$.\\
		(1)\textbf{Solve} $d_t\vu^n\in\mathcal{F}_{\vu^{n-1}}$ by \eqref{eq:discrete_flow}. \\
		(2)\textbf{Update} $\vu^n=\vu^{n-1}+sd_t\vu^n$. \\
		(3)\textbf{Stop when} $\seminorm{\frac12d_t\left.\left( \M{\vu^n,\vu^n}+\norm{d_t\vu^n}^2_U\right)\right.}\leq \varepsilon$ or $n>[T_{max}/s]$, \textbf{ otherwise set} $n\to n+1$ and \textbf{continue} with step (1). \\
        (4)\textbf{Output}: $\vu^n$.
        \caption{BDF1 discretization of accelerated gradient flow with tangent space update}\label{Algorithm 1}
	\end{algorithm}

\shuo{
We now establish the total energy stability and unconditional constraint violation for the proposed method. We view $E[\vu^n]=\frac12\M{\vu^n,\vu^n}$ as the potential energy, $\frac12\norm{d_t\vu^n}^2_U$ as the kinetic energy and the sum of them as the total energy. 
\begin{thm}\label{thm:BDF1-property}
 Let $\{\vu^n\}_{n\in\mathbb{N}}$ be a sequence of outputs produced in Algorithm \ref{Algorithm 1}. \\
		(a) \textup{Energy decay}: For every $n\geq 1$, 
  \begin{equation}\label{eq:energy-decay-BDF1}
		\M{\vu^n,\vu^n}+\norm{d_t\vu^n}^2_U\leq \M{\vu^{n-1},\vu^{n-1}}+\norm{d_t\vu^{n-1}}^2_U,
  \end{equation}
  and for $N\geq 1$
  	\begin{equation}\label{eq:sum-BDF1}
		\begin{aligned}
	\M{\vu^N,\vu^N}+\norm{d_t\vu^N}^2_U+s^2\sum_{n=1}^{N}\left(\norm{d^2_t\vu^n}^2_U+\M{d_t\vu^n,d_t\vu^n}+\frac{\alpha}{ns^2}\norm{d_t\vu^n}^2_U\right)\leq\M{\vu^0,\vu^0}.
		\end{aligned}		
	\end{equation}
		(b) \textup{Unconditional constraint violation}: 
\begin{equation}\label{eq:BDF1-cons-vio}
     \norm{B(\vu^N,\vu^N)-\vu_c }_Z\lesssim Ns^2.
 \end{equation}
\end{thm}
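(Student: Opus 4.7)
Part (a) will follow from a standard energy argument: the key is that the update direction $d_t\vu^n$ itself lies in the tangent space $\mathcal{F}_{\vu^{n-1}}$ by construction, so it is an admissible test function in \eqref{eq:discrete_flow}. Testing with $\vw = d_t\vu^n$ yields
\[
\innerprocuct{d_t^2\vu^n,d_t\vu^n}_U + \frac{\alpha}{t_n}\|d_t\vu^n\|_U^2 + \mathcal{M}(\vu^n,d_t\vu^n) = 0,
\]
where I have used $\vu^{n-1} + sd_t\vu^n = \vu^n$. I then apply the binomial identity \eqref{binominal formula} to $\mathcal{M}(\vu^n,d_t\vu^n)$ and apply the analogous identity (for the bilinear form $\innerprocuct{\cdot,\cdot}_U$, with $a_n := d_t\vu^n$, so $d_t a_n = d_t^2\vu^n$) to the first term. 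Multiplying through by $2s$ and rearranging gives an exact equality whose left-hand side is $\mathcal{M}(\vu^n,\vu^n) + \|d_t\vu^n\|_U^2$ plus non-negative dissipation terms of the form $s^2\|d_t^2\vu^n\|_U^2$, $s^2\mathcal{M}(d_t\vu^n,d_t\vu^n)$, and $\frac{2\alpha}{n}\|d_t\vu^n\|_U^2$, and whose right-hand side is $\mathcal{M}(\vu^{n-1},\vu^{n-1}) + \|d_t\vu^{n-1}\|_U^2$. Dropping the dissipation terms delivers \eqref{eq:energy-decay-BDF1}; telescoping the equality from $n=1$ to $N$ and using $d_t\vu^0 = \bz$ delivers \eqref{eq:sum-BDF1} (with the factor $\alpha/n$ in place of $2\alpha/n$ absorbed by the inequality).

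Part (b) will follow from the algebraic framework of Section~\ref{sec:cons-vio-bdfn} specialized to $k=1$. With $\delta_0 = 1$, $\delta_1 = -1$, $\gamma_0 = 1$, $\beta_{10} = 1$, identity \eqref{eq:abstract-identity} collapses to
\[
B(\vu^n,\vu^n) - B(\vu^{n-1},\vu^{n-1}) - 2s\,B(d_t\vu^n,\vu^{n-1}) = s^2 B(d_t\vu^n,d_t\vu^n).
\]
Since $d_t\vu^n \in \mathcal{F}_{\vu^{n-1}}$, the middle term vanishes by \eqref{eq:tangent_F}, yielding the clean telescoping recursion $b_n - b_{n-1} = s^2 B(d_t\vu^n,d_t\vu^n)$. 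Summing from $n=1$ to $N$ and using $b_0 = 0$ (since $\vu^0 \in \mathcal{A}$) gives $b_N = s^2 \sum_{n=1}^N B(d_t\vu^n,d_t\vu^n)$.

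The final step is to control the sum. Taking norms and applying Assumption~\ref{assum:B} gives $\|b_N\|_Z \leq c_Z s^2 \sum_{n=1}^N \|d_t\vu^n\|_U^2$. The energy decay \eqref{eq:energy-decay-BDF1} combined with $d_t\vu^0 = \bz$ and the coercivity of $\mathcal{M}$ (Assumption~\ref{assum:bilinear}) yields a uniform kinetic energy bound $\|d_t\vu^n\|_U^2 \leq \mathcal{M}(\vu^0,\vu^0)$ independent of $n$, so $\sum_{n=1}^N \|d_t\vu^n\|_U^2 \lesssim N$, and \eqref{eq:BDF1-cons-vio} follows.

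\textbf{Main obstacle.} There is no real analytic difficulty here; the only points requiring care are (i) verifying that $d_t\vu^n$ is an admissible test function (which holds by construction of the scheme), and (ii) noting that the sum $\sum_{n=1}^N \frac{\alpha}{n}\|d_t\vu^n\|_U^2$ coming from the damping term is insufficient on its own to bound $\sum \|d_t\vu^n\|_U^2$ uniformly in $N$, which is why one must instead appeal to the pointwise bound on kinetic energy from \eqref{eq:energy-decay-BDF1}. This pointwise bound is precisely what produces the linear factor $N$ on the right-hand side of \eqref{eq:BDF1-cons-vio}, consistent with the fact that BDF-1 gives only the classical $\mathcal{O}(s)$ estimate after $N \sim 1/s$ iterations.
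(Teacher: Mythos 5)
Your proposal is correct and follows essentially the same route as the paper, which only sketches this proof (testing \eqref{eq:discrete_flow} with $\vw=d_t\vu^n$ for the energy identity, then controlling $\sum_{n=1}^N\|d_t\vu^n\|_U^2\lesssim N$ to get the $Ns^2$ bound via the telescoped constraint identity). The only cosmetic difference is that you bound $\sum_{n=1}^N\|d_t\vu^n\|_U^2$ by $N$ times the pointwise kinetic-energy bound from \eqref{eq:energy-decay-BDF1}, whereas the paper's outline weights the summable series $\sum_n n^{-1}\|d_t\vu^n\|_U^2$ from \eqref{eq:sum-BDF1} by $n\le N$ — both are immediate consequences of the same energy estimate and yield the identical conclusion.
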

The proof follows by a straightforward generalization of the theory in \cite{DonGuoYan24}, and thus we omit details of the proof and only outline the key steps as follows. Taking $\vw = d_t\vu^n$ in \eqref{eq:discrete_flow} yields the energy stability. The summability of $n^{-1}\|d_t\vu^n\|_U^2$ in \eqref{eq:sum-BDF1} implies \eqref{eq:BDF1-cons-vio}. 

For a fixed stopping tolerance $\varepsilon$ in Algorithm~\ref{Algorithm 1}, the total iteration count satisfies $N \approx \order{s^{-1}}$ (see \cite{DonGuoYan24} for detailed discussion and numerical verification). Consequently, the constraint violation \eqref{eq:BDF1-cons-vio} simplifies to
\begin{equation}\label{eq:BDF1-cons-vio-2}
    \norm{B(\vu^N,\vu^N) - \vu_c}_Z \lesssim s.
\end{equation}
}

\subsection{\shuo{BDF-2 version and its adapted stability}}\label{sec:bdf2-stability}

\shuo{We next present the BDF-2 projection-free accelerated gradient flow scheme to solve the constrained minimization problem \eqref{eq:cons_min}, and it employs BDF-2 approximations to enforce the linearized constraint \eqref{eq:linear-cons}.}

\shuo{Recalling the definition \eqref{eq:bdf-formula-extrapolation} and \eqref{eq:def-bdf-k-coeff}, the BDF-2 approximation of $\dot{\vu}(t)$ and corresponding extrapolation (approximation of $\vu(t)$) at $t=t_n:=ns$ are
\begin{equation}\label{eq:BDF-2}
	\dot{\vu}^n=\frac{3\vu^n-4\vu^{n-1}+\vu^{n-2}}{2s},\quad \hat{\vu}^n:=2\vu^{n-1}-\vu^{n-2}.
\end{equation}
}

\shuo{We propose the BDF-2 accelerated gradient flows as follows.}
We compute $\dot{\vu}^n$ in the tangent space $ \mathcal{F}_{\hat{\vu}^n}$ through the following equation 
\begin{equation}\label{eq:discrete_flow_BDF2}
	\innerprocuct{(\dot{\vu}^n-\dot\vu^{n-1})/s,\vw}_U+\innerprocuct{\frac{\alpha}{t_n}\dot{\vu}^n,\vw}_U+\frac13\mathcal{M} \left(4\vu^{n-1}-\vu^{n-2}+2s\dot{\vu}^n,\vw\right)=0,
\end{equation}
for all $\vw\in\mathcal{F}_{\hat{\vu}^n}$, \shuo{which is motivated by \eqref{eq:second order flow problem}.} 
It is worthy noting that the first term in \eqref{eq:discrete_flow_BDF2} requires $n\ge3$. Therefore, we initialize the flow by solving $\vu^1$ with one step of the BDF-1 scheme \shuo{as follows.  
We compute $d_t\vu^1\in\mathcal{F}_{\vu^{0}}$ such that 
 \begin{equation}\label{eq:BDF2-initial}
			\innerprocuct{d_t^2\vu^1,\vw}_U+\frac{\alpha}{s}\innerprocuct{d_t\vu^1,\vw}_U+\mathcal{M} (\vu^{0}+sd_t\vu^1,\vw)=0,
\end{equation}
for all $\vw\in\mathcal{F}_{\vu^{0}}$ and with the convention $d_t\vu^0=\bz$. We then update $\vu^1=\vu^{0}+sd_t\vu^1$.  
Moreover, we} compute $\dot{\vu}^2 \in \mathcal{F}_{\hat{\vu}^2}$ such that 
\begin{equation}\label{eq:transition_step}
	\innerprocuct{(\dot{\vu}^2-d_t\vu^1)/s,\vw}_U+\frac{\alpha}{2s}\innerprocuct{\dot{\vu}^2,\vw}_U+\frac13\mathcal{M} \left(4\vu^{1}-\vu^{0}+2s\dot{\vu}^2,\vw\right)=0,\; \text{ for all } \;\vw\in\mathcal{F}_{\hat{\vu}^2}.
\end{equation}
We summarize the proposed method in Algorithm \ref{Algorithm 2}.

\begin{algorithm}
		\leavevmode\newline(0)\textbf{Input}: $\vu^0\in\mathcal{A}$ (in particular $\B{\vu^0,\vu^0}=\vu_c$), $s>0$, $d_t\vu^0=\mathbf{0}$, $\alpha\ge3$, $\varepsilon>0$ and a $T_{max}>0$ that is sufficiently large.\\
        (1)\textbf{Initialization}: Compute $d_t\vu^1\in\mathcal{F}_{\vu^{0}}$ by \eqref{eq:BDF2-initial} with $n=1$ . \textbf{Update} $\vu^1=\vu^{0}+sd_t\vu^1$.\\
        (2)\textbf{Transition step}: Set $\hat{\vu}^2=2\vu^1-\vu^0$ and compute $\dot{\vu}^2 \in \mathcal{F}_{\hat{\vu}^2}$ by \eqref{eq:transition_step}. \textbf{Update} $\vu^2=(4\vu^{1}-\vu^{0}+2s \dot{\vu}^2)/3$. \textbf{Set} $n=3$. \\    
		(3)\textbf{Compute} $\dot{\vu}^n\in\mathcal{F}_{\hat{\vu}^n}$ by \eqref{eq:discrete_flow_BDF2} with $\hat{\vu}^n=2\vu^{n-1}-\vu^{n-2}$. \\
		(4)\textbf{Update} $\vu^n=\left(4\vu^{n-1}-\vu^{n-2}+2s\dot{\vu}^n\right)/3$. \\
		(5)\textbf{Stop when} $\seminorm{\frac{d_t}{2}\left.\left( \GM{\vu^n,\vu^{n-1}}+\norm{\dot{\vu}^n}^2_U\right)\right.}\leq \varepsilon$ or $n>[T_{max}/s]$, \textbf{ otherwise set} $n\to n+1$ and \textbf{continue} with step (3). \\
        (6)\textbf{Output}: $\vu^n$.
        \caption{BDF-2 discretization of accelerated gradient flows with tangent space update}\label{Algorithm 2}
	\end{algorithm}
 
 The BDF-2 scheme exhibits an energy stability property known as $G$-stability in the context of numerical solutions for ODEs, as discussed in \cite{wanner1996solving}. The following identity is crucial in proving the energy stability of the BDF-2 scheme for the accelerated gradient flow in this section
\begin{equation}\label{eq:A-GA}
	2s\mathcal{M} (\dot{\vu}^n,\vu^n)=\GM{\vu^n,\vu^{n-1}}-\GM{\vu^{n-1},\vu^{n-2}}+\frac{s^4}{2}\mathcal{M}(d_t^2\vu^n,d_t^2\vu^n).
\end{equation}
Here $\mathcal{G}_{\mathcal{M}}$ is another bilinear form depending on $\mathcal{M}$ as
\begin{equation}\label{eq:GA}
	\mathcal{G}_{\mathcal{M}}(\vu,\vv)=g_{11}\mathcal{M}(\vu,\vu)+g_{12}\mathcal{M}(\vu,\vv)+g_{21}\mathcal{M}(\vv,\vu)+g_{22}\mathcal{M}(\vv,\vv),
\end{equation}
for arbitrary $\vu,\vv\in U$ with the coefficients $g_{11}=5/2$, $g_{12}=g_{21}=-1$, $g_{22}=1/2$.
Then substituting the identity $2\mathcal{M} (\vu,\vv)=\M{\vu,\vu}+\M{\vv,\vv}-\M{\vu-\vv,\vu-\vv}$ into \eqref{eq:GA} leads to an equivalent expression of $\GM{\vu,\vv}$:
\begin{equation}\label{eq:GA_2}
	\mathcal{G}_{\mathcal{M}}(\vu,\vv):=\mathcal{M}(\vu-\vv,\vu-\vv)+\frac{3}{2}\mathcal{M}(\vu,\vu)-\frac{1}{2}\mathcal{M}({\vv,\vv}).
\end{equation}
One can verify \eqref{eq:A-GA} via a direct calculation using \eqref{eq:GA_2}, as well as the definitions \eqref{eq:Backward-formula-higher} and \eqref{eq:BDF-2} of $d_t^2\vu^n$ and $\dot{\vu}^n$.
Moreover, the symmetric matrix $G=(g_{ij})_{i,j=1}^2$ is positive definite and in particular has two positive eigenvalues $\lambda_1=(3-2\sqrt{2})/2$ and $\lambda_2=(3+2\sqrt{2})/2$. Therefore, $\GM{\vu,\vv}$ can be further estimated as follows: 
\begin{equation}\label{eq:GA-equivalence}
	0\leq\lambda_1\left(\M{\vu,\vu}+\M{\vv,\vv}\right)\leq  \mathcal{G}_{\mathcal{M}}(\vu,\vv)\leq \lambda_2\left(\M{\vu,\vu}+\M{\vv,\vv}\right).
\end{equation}

We proceed now to prove energy stability of the BDF-2 scheme for the accelerated gradient flow. 
The following propositions give useful estimates for the initialization step and transition step. 
\begin{proposition}[Initialization step]\label{prop:init-step}
Given $\vu^0\in\mathcal{A}$ and $d_t\vu^0=\mathbf{0}$, let $d_t\vu^1\in\mathcal{F}_{\vu^{0}}$ be the solution to \eqref{eq:BDF2-initial} with $n=1$ and $\vu^1=\vu^{0}+sd_t\vu^1$. Recall that $c_Z$ is the constant appearing in \eqref{eq:B-upper}. 
Then the following estimates are valid:
\begin{align}
	&\norm{d_t\vu^1}^2_U\leq \frac{1}{2\alpha}\M{\vu^0,\vu^0} , \label{eq:init-est-1} \\
	&\mathcal{G}_{\mathcal{M}}(\vu^1,\vu^0)\leq 2\lambda_2\M{\vu^0,\vu^0}. \label{eq:init-est-2} 
 \end{align}
\end{proposition}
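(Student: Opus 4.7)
The plan is to test the initialization equation \eqref{eq:BDF2-initial} with $\vw = d_t\vu^1$ itself, which is an admissible test function since $d_t\vu^1 \in \mathcal{F}_{\vu^0}$ by construction. Because $d_t\vu^0 = \bz$, the second-order difference simplifies to $d_t^2\vu^1 = d_t\vu^1/s$, and $\vu^0 + s d_t\vu^1 = \vu^1$, so the equation collapses to
\begin{equation*}
\frac{1+\alpha}{s}\norm{d_t\vu^1}_U^2 + \mathcal{M}(\vu^1, d_t\vu^1) = 0.
\end{equation*}

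Next I would apply the binomial identity \eqref{binominal formula} with $n=1$ to rewrite $2\mathcal{M}(d_t\vu^1,\vu^1)$ in terms of $\mathcal{M}(\vu^1,\vu^1)-\mathcal{M}(\vu^0,\vu^0)$ and $s\mathcal{M}(d_t\vu^1,d_t\vu^1)$. Substituting and multiplying by $2s$ yields the clean energy identity
\begin{equation*}
\mathcal{M}(\vu^1,\vu^1) + 2(1+\alpha)\norm{d_t\vu^1}_U^2 + s^2 \mathcal{M}(d_t\vu^1, d_t\vu^1) = \mathcal{M}(\vu^0,\vu^0).
\end{equation*}
Since the coercivity in Assumption \ref{assum:bilinear} makes $\mathcal{M}(\vu^1,\vu^1)$ and $s^2 \mathcal{M}(d_t\vu^1, d_t\vu^1)$ both non-negative, dropping them and using $2(1+\alpha) \geq 2\alpha$ produces \eqref{eq:init-est-1} immediately.

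For \eqref{eq:init-est-2}, I would start from the upper bound in the equivalence \eqref{eq:GA-equivalence}, which gives $\mathcal{G}_\mathcal{M}(\vu^1,\vu^0) \leq \lambda_2 (\mathcal{M}(\vu^1,\vu^1) + \mathcal{M}(\vu^0,\vu^0))$. The same energy identity above also implies $\mathcal{M}(\vu^1,\vu^1) \leq \mathcal{M}(\vu^0,\vu^0)$, and combining these two bounds yields the factor of $2\lambda_2$ as claimed.

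Both estimates therefore come out as direct consequences of a single testing step, and there is no real obstacle here. The only subtle point to verify is that $d_t\vu^1$ is genuinely in $\mathcal{F}_{\vu^0}$ so that it can be used as a test function — this holds by the very formulation of \eqref{eq:BDF2-initial} — and that the constant $2\alpha$ (rather than the slightly sharper $2(1+\alpha)$) is intentionally chosen for later convenience in Algorithm \ref{Algorithm 2}.
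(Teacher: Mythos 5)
Your proposal is correct and follows essentially the same route as the paper: test \eqref{eq:BDF2-initial} with $\vw=d_t\vu^1$, use the binomial identity \eqref{binominal formula} to obtain the energy identity yielding both \eqref{eq:init-est-1} and $\M{\vu^1,\vu^1}\leq\M{\vu^0,\vu^0}$, and then conclude \eqref{eq:init-est-2} via the equivalence \eqref{eq:GA-equivalence}. Your write-up simply makes explicit the intermediate identity that the paper leaves implicit, including the sharper constant $2(1+\alpha)$ that is relaxed to $2\alpha$.
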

\begin{proof}
\shuo{Note that} the initialization step in Algorithm \ref{Algorithm 2} is a step of BDF-1 scheme. 
\shuo{Taking $\vw=d_t\vu^1$ in \eqref{eq:BDF2-initial}} leads to \eqref{eq:init-est-1} and also the fact that $\M{\vu^1,\vu^1}\leq \M{\vu^0,\vu^0}$. 
Then combining this with \eqref{eq:GA-equivalence}, we derive
\begin{equation}\label{eq:GM-est}
\GM{\vu^1,\vu^0}\leq \lambda_2(\M{\vu^0,\vu^0}+\M{\vu^1,\vu^1})\leq 2\lambda_2\M{\vu^0,\vu^0},
\end{equation}
which validates \eqref{eq:init-est-2}. 
\end{proof}

\begin{proposition}[Transition step]\label{prop:trans-step}
Given $\vu^0\in\mathcal{A}$ and $d_t\vu^0=\bz$, let $\vu^1$ be the output of the initialization step in Algorithm \ref{Algorithm 2}, $\dot{\vu}^2 \in \mathcal{F}_{\hat{\vu}^2}$ be the solution of \eqref{eq:transition_step} with $\hat{\vu}^2=2\vu^1-\vu^0$ and $\vu^2:=(4\vu^{1}-\vu^{0}+2s \dot{\vu}^2)/3$. Then the following estimates are valid:
\begin{align}
	&\norm{\dot{\vu}^2}^2_U+\mathcal{G}_{\mathcal{M}}(\vu^2,\vu^1)\leq \left(2\lambda_2+\frac{1}{2\alpha}\right)\M{\vu^0,\vu^0}, \label{eq:transition-est-1} \\
	&\norm{\dot{\vu}^2}^2_U\leq \left(\frac{1}{2\alpha^2}+\frac{2\lambda_2}{\alpha}\right)\M{\vu^0,\vu^0}. \label{eq:transition-est-2}
 \end{align}
\end{proposition}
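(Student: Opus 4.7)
The plan is to test the transition-step equation \eqref{eq:transition_step} with $\vw=\dot{\vu}^2$, which is admissible since $\dot{\vu}^2 \in \mathcal{F}_{\hat{\vu}^2}$, and then extract a clean energy balance using the $G$-stability identity \eqref{eq:A-GA} specialized at $n=2$, together with the estimates \eqref{eq:init-est-1} and \eqref{eq:init-est-2} inherited from the initialization step.

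First I would observe that the update rule gives $4\vu^1-\vu^0+2s\dot{\vu}^2=3\vu^2$, so \eqref{eq:transition_step} is equivalent to
\begin{equation*}
\innerprocuct{(\dot{\vu}^2-d_t\vu^1)/s,\vw}_U+\frac{\alpha}{2s}\innerprocuct{\dot{\vu}^2,\vw}_U+\mathcal{M}(\vu^2,\vw)=0,
\end{equation*}
for all $\vw\in\mathcal{F}_{\hat{\vu}^2}$. Choosing $\vw=\dot{\vu}^2$ and multiplying through by $2s$ yields
\begin{equation*}
2\innerprocuct{\dot{\vu}^2-d_t\vu^1,\dot{\vu}^2}_U+\alpha\norm{\dot{\vu}^2}_U^2+2s\mathcal{M}(\vu^2,\dot{\vu}^2)=0.
\end{equation*}

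Next I would apply the polarization identity $2\innerprocuct{a-b,a}=\norm{a}^2+\norm{a-b}^2-\norm{b}^2$ to the first term with $a=\dot{\vu}^2$, $b=d_t\vu^1$, and apply identity \eqref{eq:A-GA} with $n=2$ to $2s\mathcal{M}(\dot{\vu}^2,\vu^2)$. Combining these yields the central energy balance
\begin{equation*}
(1+\alpha)\norm{\dot{\vu}^2}_U^2+\norm{\dot{\vu}^2-d_t\vu^1}_U^2+\mathcal{G}_{\mathcal{M}}(\vu^2,\vu^1)+\tfrac{s^4}{2}\mathcal{M}(d_t^2\vu^2,d_t^2\vu^2)=\norm{d_t\vu^1}_U^2+\mathcal{G}_{\mathcal{M}}(\vu^1,\vu^0).
\end{equation*}
The right-hand side is bounded by $\bigl(\tfrac{1}{2\alpha}+2\lambda_2\bigr)\M{\vu^0,\vu^0}$ by adding \eqref{eq:init-est-1} and \eqref{eq:init-est-2}. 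The non-negativity of every summand on the left (using coercivity in Assumption \ref{assum:bilinear} for $\mathcal{M}(d_t^2\vu^2,d_t^2\vu^2)\ge 0$ and the lower bound in \eqref{eq:GA-equivalence} for $\mathcal{G}_{\mathcal{M}}(\vu^2,\vu^1)\ge 0$) is the key structural feature that will be exploited twice.

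For \eqref{eq:transition-est-1} I would discard $\norm{\dot{\vu}^2-d_t\vu^1}_U^2$, $\tfrac{s^4}{2}\mathcal{M}(d_t^2\vu^2,d_t^2\vu^2)$, and the excess factor $\alpha\norm{\dot{\vu}^2}_U^2$, keeping only $\norm{\dot{\vu}^2}_U^2+\mathcal{G}_{\mathcal{M}}(\vu^2,\vu^1)$ on the left. For \eqref{eq:transition-est-2} I would instead keep the full factor $(1+\alpha)\norm{\dot{\vu}^2}_U^2$ and discard $\mathcal{G}_{\mathcal{M}}(\vu^2,\vu^1)$ together with the other non-negative terms, then divide by $1+\alpha$ and use $(1+\alpha)^{-1}\le \alpha^{-1}$ to produce the prefactor $\tfrac{1}{2\alpha^2}+\tfrac{2\lambda_2}{\alpha}$. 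There is no substantive obstacle beyond correctly invoking \eqref{eq:A-GA} at $n=2$ and tracking signs; the main structural point is that the damping coefficient $\alpha/(2s)$ in \eqref{eq:transition_step} produces precisely the $(1+\alpha)$ factor that allows the sharper kinetic-energy bound in \eqref{eq:transition-est-2}.
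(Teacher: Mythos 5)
Your proof is correct and follows essentially the same route as the paper: test \eqref{eq:transition_step} with $\vw=\dot{\vu}^2$, invoke the $G$-stability identity \eqref{eq:A-GA} at $n=2$ (valid because the update rule makes $\dot{\vu}^2$ the BDF-2 derivative of $\vu^2,\vu^1,\vu^0$), arrive at the energy balance \eqref{eq:transition-est-inter}, and then combine with Proposition \ref{prop:init-step} while discarding the appropriate non-negative terms. Your write-up simply makes explicit the polarization step and the two different ways of dropping terms that the paper leaves implicit.
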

\begin{proof}
Taking $\vw=\dot{\vu}^2$ in \eqref{eq:transition_step}, and using \eqref{eq:A-GA} and $\vu^2=(4\vu^{1}-\vu^{0}+2s \dot{\vu}^2)/3$, we derive 
\begin{equation}\label{eq:transition-est-inter}
\left(\GM{\vu^2,\vu^1}+\norm{\dot{\vu}^2}^2_U\right)+\norm{\dot{\vu}^2-d_t\vu^1}^2_U+\alpha\norm{\dot{\vu}^2}^2_U+\frac{s^4}{2}\M{d^2_t\vu^2,d^2_t\vu^2}=\GM{\vu^1,\vu^0}+\norm{d_t\vu^1}^2_U.
\end{equation}
Together with Proposition \ref{prop:init-step}, this implies \eqref{eq:transition-est-1} and \eqref{eq:transition-est-2}.
\end{proof}

\shuo{The following theorem establishes that Algorithm \ref{Algorithm 2} generates iterations which monotonically decrease a modified ``total energy'' functional}, namely 
\[\GM{\vu^n,\vu^{n-1}}+\norm{\dot{\vu}^n}_U^2,\]
\shuo{where $\norm{\dot{\vu}^n}_U^2$ represents a discrete kinetic energy and $\GM{\vu^n,\vu^{n-1}}$ can be viewed as a modified potential energy.}
Moreover, it is proved that the iterations terminate in finitely many steps.
\begin{thm} 
Let $\{\vu^n\}_{n\in\mathbb{N}}$ be a sequence of the outputs produced in Algorithm \ref{Algorithm 2}. \\
	(a) \textup{(Energy decay).} For every $n\geq 3$, we have
    \begin{equation}\label{eq:GM-stab}
	\GM{\vu^n,\vu^{n-1}}+\norm{\dot{\vu}^n}^2_U\leq \GM{\vu^{n-1},\vu^{n-2}}+\norm{\dot{\vu}^{n-1}}^2_U.
    \end{equation}
    (b) \textup{(Termination in finite steps).} For every given $s>0$ and $\varepsilon>0$, Algorithm \ref{Algorithm 2} terminates in a finite number of iterations.	
\end{thm}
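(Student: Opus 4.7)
The plan for part (a) is to test the BDF-2 scheme \eqref{eq:discrete_flow_BDF2} against $\vw=\dot{\vu}^n$, which is an admissible test function since $\dot{\vu}^n\in\mathcal{F}_{\hat{\vu}^n}$ by construction of Algorithm~\ref{Algorithm 2}. Observing that the update rule $\vu^n=(4\vu^{n-1}-\vu^{n-2}+2s\dot{\vu}^n)/3$ rearranges to $3\vu^n=4\vu^{n-1}-\vu^{n-2}+2s\dot{\vu}^n$, the last term in \eqref{eq:discrete_flow_BDF2} collapses to $\mathcal{M}(\vu^n,\dot{\vu}^n)$. Multiplying the resulting identity by $2s$, I will then apply the binomial formula $2a(a-b)=a^2+(a-b)^2-b^2$ in the $U$-inner product to the first term with $a=\dot{\vu}^n$ and $b=\dot{\vu}^{n-1}$, and invoke the key $G$-identity \eqref{eq:A-GA} on the third term.

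After these substitutions, I expect an identity of the shape
\begin{equation*}
\GM{\vu^n,\vu^{n-1}}+\norm{\dot{\vu}^n}_U^2+R^n=\GM{\vu^{n-1},\vu^{n-2}}+\norm{\dot{\vu}^{n-1}}_U^2,
\end{equation*}
where the remainder $R^n$ gathers $\norm{\dot{\vu}^n-\dot{\vu}^{n-1}}_U^2$ from the binomial identity, $(2\alpha s/t_n)\norm{\dot{\vu}^n}_U^2$ from the damping term, and $\tfrac{s^4}{2}\mathcal{M}(d_t^2\vu^n,d_t^2\vu^n)$ from the $G$-identity. Each of these contributions is non-negative; in particular, the last uses coercivity of $\mathcal{M}$ from Assumption~\ref{assum:bilinear}. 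Discarding $R^n\geq 0$ then yields \eqref{eq:GM-stab}.

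For part (b), set $E^n:=\GM{\vu^n,\vu^{n-1}}+\norm{\dot{\vu}^n}_U^2$. By \eqref{eq:GA-equivalence} together with the positivity of the $U$-norm squared, $E^n\geq 0$ for all $n\geq 2$, and part (a) ensures $\{E^n\}_{n\geq 2}$ is non-increasing, while $E^2$ is finite by Proposition~\ref{prop:trans-step}. If the energy-based stopping criterion fails at iteration $n$, then $|\tfrac{1}{2}d_tE^n|>\varepsilon$, which combined with monotonicity gives $E^{n-1}-E^n>2s\varepsilon$. Telescoping from $n=3$ to $n=N$ yields $E^2\geq E^2-E^N>2(N-2)s\varepsilon$, so the number of iterations that can evade the energy criterion is bounded by $N_\star:=2+E^2/(2s\varepsilon)$. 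Together with the safeguard $n>[T_{max}/s]$, termination occurs after at most $\min\{N_\star,[T_{max}/s]+1\}$ iterations.

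The main technical step is the bookkeeping in part (a): carefully exploiting the equivalence between the BDF-2 update rule and the defining relation of $\dot{\vu}^n$ so that the nonlinearity collapses to $\mathcal{M}(\vu^n,\dot{\vu}^n)$, then applying \eqref{eq:A-GA} cleanly. The heavy algebraic work behind the $G$-identity itself has already been done in the preliminaries, so the stability proof reduces to routine manipulations. Part (b) is a standard monotone-convergence argument, whose only non-trivial ingredient is the a priori upper bound on $E^2$ supplied by Proposition~\ref{prop:trans-step}.
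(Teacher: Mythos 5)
Your proposal is correct and follows essentially the same route as the paper: part (a) is the identical computation (test with $\vw=\dot{\vu}^n$, multiply by $2s$, apply the binomial formula to the inertial term and the $G$-identity \eqref{eq:A-GA} to the $\mathcal{M}$-term, then drop the non-negative remainder), reproducing the paper's one-step identity \eqref{eq:BDF2-stab-onestep}. For part (b) the paper instead sums that identity and argues each remainder term tends to zero, but your telescoping of the energy decrements is the same monotone-bounded-energy idea and even yields an explicit iteration bound, so the two arguments are equivalent in substance.
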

\begin{proof} 
Choosing $\vw=\dot{\vu}^n$ in \eqref{eq:discrete_flow_BDF2}, using the identity \eqref{eq:A-GA} and recalling the definition \eqref{eq:BDF-2}, we obtain 
\begin{equation}\label{eq:BDF2-stab-onestep}
\begin{aligned}
	\left(\GM{\vu^n,\vu^{n-1}}+\norm{\dot{\vu}^n}^2_U\right)+\frac{2\alpha}{n}\norm{\dot{\vu}^n}^2_U&\\
	+s^2\norm{d_t\dot{\vu}^n}^2_U+\frac{s^4}{2}\M{d_t^2\vu^n,d_t^2\vu^n}& =\GM{\vu^{n-1},\vu^{n-2}}+\norm{\dot{\vu}^{n-1}}^2_U,
\end{aligned}
\end{equation}
which immediately implies \eqref{eq:GM-stab}. Then summing \eqref{eq:BDF2-stab-onestep} from $n=3$ to $N$ on both sides and using Proposition \ref{prop:trans-step} yield 
\begin{equation}\label{eq:BDF2-stab-sum}
\begin{aligned}
		\left(\GM{\vu^N,\vu^{N-1}}+\norm{\dot{\vu}^N}^2_U\right) +\sum_{n=3}^{N}\frac{2\alpha}{n}\norm{\dot{\vu}^n}^2_U&\\
		+ s^2\sum_{n=3}^{N}\norm{d_t\dot{\vu}^n}^2_U +\frac{s^4}{2}\sum_{n=3}^{N}\M{d_t^2\vu^n,d_t^2\vu^n}& \leq\left(2\lambda_2+\frac{1}{2\alpha}\right)\M{\vu^0,\vu^0}.
	\end{aligned}
 \end{equation}
\shuo{
This further implies that $(1/n)\norm{\dot\vu^n}_U^2$, $\norm{d_t\dot\vu^n}_U^2$ and $\norm{d^2_t\vu^n}_U^2$ converges to $0$ as $n\to\infty$. 
By \eqref{eq:BDF2-stab-onestep}, the stopping criteria in Algorithm \ref{Algorithm 2} requires that
	\begin{equation}\label{equation for termination}
\varepsilon\geq\seminorm{\frac12d_t\left(\GM{\vu^n,\vu^{n-1}}+\norm{d_t\vu^n}^2_U\right)}=\frac{\alpha}{ns}\norm{\dot{\vu}^n}^2_U+\frac{s}{2}\norm{d_t\dot{\vu}^n}^2_U+\frac{s^3}{4}\M{d_t^2\vu^n,d_t^2\vu^n},
	\end{equation}
 for a given $\varepsilon>0$. Since the RHS converges to $0$ as $n\to\infty$, there exists a $N\in\mathbb{N}$ depending only on $\varepsilon$ such that the stopping criteria is guaranteed when $n\ge N$, which finishes the proof of (b).
 }      
\end{proof}

\shuo{
Clearly, as the stopping tolerance $\varepsilon\to0$ in Algorithm \ref{Algorithm 2}, the total iteration number tends to infinity. 
As in the following corollary, the energy stability guarantees both convergence of the scheme and a vanishing kinetic energy in the limit $n\to\infty$. 
\begin{cor}\label{cor:limit-BDF2}
Let $\{\vu^n\}_{n\in\mathbb{N}}$ be a sequence of the outputs produced in Algorithm \ref{Algorithm 2} and $s>0$ be fixed. \\
(a) \textup{(Existence of limit).} There exists a subsequence of $\vu^n$ (without relabeling) and $\vu^*\in U$ such that $\vu^n$ converges weakly to $\vu^*$ in $U$. \\
(b) \textup{(Vanishing kinetic energy in the limit).} As $n\to\infty$, $\dot\vu^n$ converges strongly to $\bz$ in $U$, up to a subsequence.  
\end{cor}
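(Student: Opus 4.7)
The plan is to combine the energy monotonicity \eqref{eq:GM-stab} and the summability consequence of \eqref{eq:BDF2-stab-sum} with the equivalence \eqref{eq:GA-equivalence} and the coercivity in Assumption \ref{assum:bilinear}.

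For part (a), I would first observe that \eqref{eq:GM-stab} gives
\[
\GM{\vu^n,\vu^{n-1}}+\norm{\dot\vu^n}_U^2 \leq \GM{\vu^2,\vu^1}+\norm{\dot\vu^2}_U^2
\]
for every $n\ge 3$, and Proposition \ref{prop:trans-step} bounds the right-hand side by a constant depending only on $\M{\vu^0,\vu^0}$, $\alpha$, and $\lambda_2$. Applying the lower bound $\GM{\vu^n,\vu^{n-1}}\geq \lambda_1\M{\vu^n,\vu^n}$ from \eqref{eq:GA-equivalence} together with coercivity $\M{\vu^n,\vu^n}\geq C_1\norm{\vu^n}_U^2$ yields a uniform bound $\norm{\vu^n}_U\leq C$ independent of $n$. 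Since $U$ is a Hilbert space, Banach--Alaoglu then supplies a subsequence of $\vu^n$ (not relabeled) converging weakly in $U$ to some $\vu^*\in U$.

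For part (b), the key input is the uniform bound
\[
\sum_{n=3}^N \frac{2\alpha}{n}\norm{\dot\vu^n}_U^2 \leq \left(2\lambda_2+\tfrac{1}{2\alpha}\right)\M{\vu^0,\vu^0},
\]
valid for all $N\ge 3$, obtained in \eqref{eq:BDF2-stab-sum}. I would argue by contradiction: if $\liminf_{n\to\infty}\norm{\dot\vu^n}_U^2 \geq \delta > 0$, then $\norm{\dot\vu^n}_U^2 \geq \delta/2$ for all $n$ sufficiently large, and the partial sums of $(\delta/2)\sum_n 1/n$ would diverge, contradicting the uniform bound. Hence $\liminf_{n\to\infty}\norm{\dot\vu^n}_U = 0$, which delivers a subsequence along which $\dot\vu^{n_k}\to \bz$ strongly in $U$.

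Finally, by first extracting the subsequence from (b) and then applying weak compactness to extract a further subsequence, one obtains a single subsequence along which both (a) and (b) hold simultaneously. The only mildly subtle point is that, because $\sum 1/n$ diverges, the summability in \eqref{eq:BDF2-stab-sum} does not allow one to upgrade (b) to the entire sequence; this is precisely why the statement is phrased \emph{up to a subsequence}.
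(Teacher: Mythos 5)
Your proposal is correct and follows essentially the same route as the paper: part (a) via the energy stability bound, the equivalence \eqref{eq:GA-equivalence}, and coercivity to get uniform boundedness of $\norm{\vu^n}_U$ followed by weak compactness in the Hilbert space $U$, and part (b) from the summability of $n^{-1}\norm{\dot\vu^n}_U^2$ in \eqref{eq:BDF2-stab-sum}. You merely spell out the divergence-of-$\sum 1/n$ argument that the paper leaves implicit, which is a fine addition.
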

\begin{proof}
By the energy stability \eqref{eq:BDF2-stab-sum}, the equivalence estimate \eqref{eq:GA-equivalence} for $\mathcal{G}_{\mathcal{M}}$ and the coercivity \eqref{assum:bilinear} of $\mathcal{M}$, we conclude that $\|\vu^n\|_U$ is uniformly bounded. Whence, there exists a subsequence of $\vu^n$ and $\vu^*\in U$ such that $\vu^n$ converges weakly to $\vu^*$ in the Hilbert space $U$. 
Moreover, by \eqref{eq:BDF2-stab-sum} we note that $\sum_{n=3}^{\infty}\frac{\norm{\dot{\vu}^n}^2_U}{n} \leq C$, which immediately leads to (b).  
\end{proof}
}

\shuo{We now examine the limit point $\vu^*$ obtained in Corollary \ref{cor:limit-BDF2}.
In practice, to realize the linearized constraint \eqref{eq:linear-cons}, we introduce Lagrange multipliers $\lambda^n\in Z^{*}$, rewrite \eqref{eq:discrete_flow_BDF2} into the following saddle-point system, and compute both the multiplier and $\dot\vu^n\in U_0$:
\begin{align}\label{eq:discrete_flow_saddle}
			\innerprocuct{d_t\dot\vu^n,\vw}_U+\innerprocuct{\frac{\alpha}{t_n}\dot{\vu}^n,\vw}_U+\frac13\mathcal{M} \left(4\vu^{n-1}-\vu^{n-2}+2s\dot{\vu}^n,\vw\right)+\langle\lambda^n,B(\hat\vu^n,\vw)\rangle&=0, \\ \nonumber
            \langle\mu,B(\hat\vu^n,d_t\vu^n)\rangle&=0,
\end{align}
for any test function $(\vw,\mu)\in U_0\times Z^{*}$. Here, $\langle\cdot,\cdot\rangle$ denotes the dual action between $Z^{*}$ and $Z$. The classical theory on saddle-point systems \cite{boffi2013mixed} guarantees the solvability of \eqref{eq:discrete_flow_saddle} when the following inf-sup stability condition holds: Assume $B$, $Z$ and $U$ are given such that for any $n\ge0$ there exists a constant $c>0$  
\begin{equation}\label{eq:inf-sup}
\inf_{\mu\in Z^{*}}\sup_{\vw\in U_0}\frac{\langle\mu,B(\hat\vu^n,\vw)\rangle}{\|\vw\|_U\|\mu\|_{Z^{*}}} \geq c. 
\end{equation}
We characterize $\vu^*$ in the next Corollary. 
 \begin{cor}\label{cor:convergence}
 Let $\{\vu^n\}_{n=0}^N$ be a sequence of outputs produced in Algorithm \ref{Algorithm 2} with \eqref{eq:discrete_flow_BDF2} replaced by \eqref{eq:discrete_flow_saddle}. For a fixed $s>0$, $N:=N(\varepsilon)$ denotes the final iteration number determined by the stopping tolerance $\varepsilon$. When $N\to\infty$ (i.e., $\varepsilon\to0$), there exists $\vu^{*}\in U$ such that $\vu^n\rightharpoonup\vu^{*}$ weakly in $U$ up to a subsequence. Moreover, if we further assume the inf-sup stability \eqref{eq:inf-sup}, $Z\subset Z^{**}$, and that $B(\cdot,\vw):U\to Z$ is continuous with respect to the weak topology of $U$ for any $\vw\in U_0$, then $\vu^{*}$ is a local minimizer of $E$ in the tangent space $\mathcal{F}_{\vu^{*}}$, i.e., 
 \begin{equation}\label{eq:local-min-in-tangent}
 E[\vu^{*}]\le E[\vu^{*}+\vw]\quad\forall \vw\in\mathcal{F}_{\vu^{*}}. 
 \end{equation}
\end{cor}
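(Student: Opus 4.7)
The first conclusion---existence of a weak subsequential limit $\vu^n\rightharpoonup\vu^*$ in $U$---is essentially inherited from Corollary~\ref{cor:limit-BDF2}(a), whose proof uses only the total energy decay \eqref{eq:BDF2-stab-sum} and coercivity of $\mathcal{M}$; these are unaffected by rewriting \eqref{eq:discrete_flow_BDF2} as the saddle-point system \eqref{eq:discrete_flow_saddle}. The substance of the corollary is therefore identifying $\vu^*$ as a minimizer of $E$ on the affine space $\vu^*+\mathcal{F}_{\vu^*}$. Since $\mathcal{M}$ is a coercive symmetric bilinear form and $E[\vu]=\tfrac12\mathcal{M}(\vu,\vu)$, the quadratic expansion
\begin{equation*}
E[\vu^*+\vw]-E[\vu^*]=\mathcal{M}(\vu^*,\vw)+\tfrac12\mathcal{M}(\vw,\vw)
\end{equation*}
reduces \eqref{eq:local-min-in-tangent} to the Euler--Lagrange identity $\mathcal{M}(\vu^*,\vw)=0$ for all $\vw\in\mathcal{F}_{\vu^*}$. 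The plan is to derive this identity by passing to the limit in the saddle-point formulation \eqref{eq:discrete_flow_saddle}, whose flow equation is posed on all of $U_0$ rather than on the moving tangent spaces $\mathcal{F}_{\hat\vu^n}$.

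The first preparatory step would be a uniform bound on the Lagrange multipliers. Rearranging the first equation of \eqref{eq:discrete_flow_saddle}, I would write $\langle\lambda^n,B(\hat\vu^n,\vw)\rangle$ as minus the sum of the inertial, damping, and potential contributions tested against $\vw$; each is controlled by $C\|\vw\|_U$ uniformly in $n$, using continuity of $\mathcal{M}$ together with the uniform bounds on $\|\vu^{n-1}\|_U$, $\|\vu^{n-2}\|_U$, $\|\dot\vu^n\|_U$ and $\|d_t\dot\vu^n\|_U$ that follow from the summability in \eqref{eq:BDF2-stab-sum} combined with the coercivity \eqref{eq:GA-equivalence} of $\mathcal{G}_{\mathcal{M}}$. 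The inf-sup condition \eqref{eq:inf-sup} applied to $\mu=\lambda^n$ then yields $\|\lambda^n\|_{Z^*}\le C$ uniformly. Banach--Alaoglu gives a subsequence with $\lambda^n\rightharpoonup^*\lambda^*$ in $Z^*$, and the hypothesis $Z\subset Z^{**}$ legitimizes evaluating this dual limit against elements of $Z$. Along further subsequences I would extract $\dot\vu^n\to\bz$ strongly in $U$ from Corollary~\ref{cor:limit-BDF2}(b), and also $d_t\dot\vu^n\to\bz$ strongly (since, for fixed $s>0$, the summability $\sum_n\|d_t\dot\vu^n\|_U^2<\infty$ is implicit in \eqref{eq:BDF2-stab-sum}).

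With all these convergences arranged along a single subsequence, I would fix an arbitrary $\vw\in U_0$ and pass to the limit in \eqref{eq:discrete_flow_saddle}. The inertial and damping terms vanish by the strong convergence of $d_t\dot\vu^n$ and $\dot\vu^n$ to $\bz$. The weak convergence $4\vu^{n-1}-\vu^{n-2}\rightharpoonup 3\vu^*$ in $U$, together with the continuity of $\mathcal{M}(\cdot,\vw)$, sends the potential term to $\mathcal{M}(\vu^*,\vw)$. For the duality term I would invoke the assumed weak-to-strong continuity of $B(\cdot,\vw):U\to Z$ to upgrade $\hat\vu^n\rightharpoonup\vu^*$ to $B(\hat\vu^n,\vw)\to B(\vu^*,\vw)$ strongly in $Z$; pairing this with the weak-$*$ convergent $\lambda^n$ in $Z^*$ gives $\langle\lambda^*,B(\vu^*,\vw)\rangle$ in the limit. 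Altogether
\begin{equation*}
\mathcal{M}(\vu^*,\vw)+\langle\lambda^*,B(\vu^*,\vw)\rangle=0\qquad\forall\,\vw\in U_0,
\end{equation*}
and restricting to $\vw\in\mathcal{F}_{\vu^*}$ annihilates the multiplier term, yielding $\mathcal{M}(\vu^*,\vw)=0$ and hence \eqref{eq:local-min-in-tangent} via the quadratic expansion and coercivity of $\mathcal{M}$.

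The main obstacle will be the Lagrangian pairing: both $\lambda^n$ and $B(\hat\vu^n,\vw)$ a priori converge only weakly, and two weak limits cannot be combined through a duality bracket without additional compactness. The hypothesis that $B(\cdot,\vw)$ is continuous from the weak topology of $U$ to the norm topology of $Z$ is exactly the compactness ingredient that breaks this weak--weak deadlock; in concrete settings such as Examples~\ref{exampjle:anisotropic Dirichlet energy} and~\ref{ex:plates} it would typically be verified via compact Sobolev embeddings. A secondary subtlety is that the strong convergences $\dot\vu^n\to\bz$ and $d_t\dot\vu^n\to\bz$ hold only along subsequences, so all extractions must be performed simultaneously, and the characterization of $\vu^*$ is correspondingly in the subsequence sense.
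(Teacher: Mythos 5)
Your proposal is correct and follows essentially the same route as the paper's proof: weak compactness from the energy stability \eqref{eq:BDF2-stab-sum}, a uniform multiplier bound via the inf-sup condition \eqref{eq:inf-sup} plus Banach--Alaoglu, the weak-to-strong continuity of $B(\cdot,\vw)$ to resolve the weak--weak pairing $\langle\lambda^n,B(\hat\vu^n,\vw)\rangle$, and the quadratic expansion of $E$ to convert $\mathcal{M}(\vu^*,\vw)=0$ into \eqref{eq:local-min-in-tangent}. The only cosmetic differences are the order of operations (the paper passes to the limit in the flow equation before bounding $\lambda^n$) and that the paper justifies $\hat\vu^n\rightharpoonup\vu^*$ along the chosen subsequence via the identity $\hat\vu^n=\vu^n-s^2d_t^2\vu^n$ and the strong convergence $d_t^2\vu^n\to\bz$, a small step you should make explicit when asserting $4\vu^{n-1}-\vu^{n-2}\rightharpoonup 3\vu^*$.
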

\begin{proof}
By Corollary \ref{cor:limit-BDF2} there exists $\vu^{*}\in U$ such that $\vu^n\rightharpoonup\vu^{*}$ weakly in $U$ up to a subsequence (without relabeling).
From the summability of $n^{-1}\|\dot\vu^n\|_U^2$, $\|d_t\dot\vu^n\|_U^2$ and $\mathcal{M}(d^2_t\vu^n,d^2_t\vu^n)$ given in \eqref{eq:BDF2-stab-sum}, as well as the coercivity of $\mathcal{M}$, we conclude that $n^{-1/2}\|\dot\vu^n\|_U\to0$, $\|d^2_t\vu^n\|_U\to0$ and $\|d_t\dot\vu^n\|_U\to0$ as $n\to\infty$. Moreover, as $\mathcal{M}(\cdot,\vw)\in U^{*}$ and $\vu^n=4\vu^{n-1}-\vu^{n-2}+2s\dot{\vu}^n$, one can pass to the limits in the first equation of \eqref{eq:discrete_flow_saddle} to obtain 
\begin{equation}\label{eq:pass-limit-1}
\lim_{n\to\infty}\langle\lambda^n,B(\hat\vu^n,\vw)\rangle=-\mathcal{M}(\vu^{*},\vw)\quad\forall\vw\in U_0.
\end{equation}
This in conjunction with the inf-sup condition \eqref{eq:inf-sup} yields
\begin{equation}\label{eq:boundedness-lambda}
\sup_{n\ge0}\|\lambda^n\|_{Z^{*}}\le\frac{1}{\beta}\sup_{n\ge0}\sup_{\vw\in U_0}\frac{\langle\lambda^n,B(\hat\vu^n,\vw)\rangle}{\|\vw\|_U}<\infty.
\end{equation}
Applying the Banach-Alaoglu theorem for the space $Z^{*}$, we conclude that there exists $\lambda^{*}\in Z^{*}$ such that $\lambda^n\to\lambda^{*}$ in $Z^{**}$ topology up to a subsequence (without relabeling). In order to pass limits in $\hat\vu^n$, we note that 
\begin{equation}\label{eq:uhatn-rewrite}
\hat\vu^n=\vu^n-\vu^n+\hat\vu^n=\vu^n-s^2d^2_t\vu^n,
\end{equation}
and $\hat\vu^n\rightharpoonup\vu^*$ weakly up to a subsequence, due to the weak convergence of $\vu^n\rightharpoonup\vu^*$ (up to a subsequence) and the strong convergence of $d^2_t\vu^n$ to $\bz$.  
Moreover, since $B(\cdot,\vw)$ is continuous with respect to the weak topology of $U$, $B(\hat\vu^n,\vw)\to B(\vu^*,\vw)$ strongly in $Z$. 
Together with the fact that $B(\vu^*,\vw)\in Z\subset Z^{**}$, we conclude that $\lim_{n\to\infty}\langle\lambda^n,B(\hat\vu^n,\vw)\rangle=\langle\lambda^*,B(\vu^*,\vw)\rangle$, which is $0$ if $\vw\in\mathcal{F}_{\vu^{*}}$, according to the definition \eqref{eq:tangent_F}.
Substituting it into \eqref{eq:pass-limit-1} leads to 
\begin{equation}\label{eq:limit-ustar}
\delta E[\vu^{*}](\vw)=\mathcal{M}(\vu^{*},\vw)=0,
\end{equation}
for all $\vw\in\mathcal{F}_{\vu^{*}}$. As $E$ is quadratic, this verifies \eqref{eq:local-min-in-tangent} and concludes the proof.
\end{proof}

\begin{remark}\label{rmk:convergence-discrete}
Corollary \ref{cor:convergence} is a generalized and spatial-continuous version of \cite[Proposition 5.5]{bonito2023numerical}. In a fully-discretized version of Corollary \ref{cor:convergence}, strong convergence can be achieved, and the assumptions that $Z \subset Z^{**}$ and the continuity of $B$ are no longer necessary, since all norms are equivalent in finite-dimensional spaces. However, a discrete version of the inf-sup stability condition \eqref{eq:inf-sup} remains necessary.
\end{remark}
}

\subsection{\shuo{Constraint violations for the BDF-2 scheme}}\label{sec:cons-vio-BDF2}
\shuo{We first} prove an estimate for discrete time derivatives that \shuo{leads to the discrete regularity needed in the proof of constraint violation.}  
Via definitions of finite difference quotients, we show that the norms of $d_t\vu^n$ and $d^2_t\vu^n$ can be bounded by quantities in terms of $\dot{\vu}^n$ and $d_t\dot{\vu}^n$ respectively in the following lemma. Note that (a) of Lemma \ref{lem:norm-dtu-d2tu} is already proved in \cite{bartelsakrivis2023quadratic}, while \eqref{eq:norm-d2tu-1} is new. 
\begin{lemma}\label{lem:norm-dtu-d2tu}
	(a) For every integer $n\geq 1$,
	\begin{equation}\label{eq:norm-dtu-1}
		\norm{d_t\vu^n}^2_U\leq\frac{8}{9}\norm{\dot{\vu}^n}^2_U+\frac{2}{9}\norm{d_t\vu^{n-1}}^2_U.
	\end{equation}
	(b) For every integer $N\geq 3$,
	\begin{equation}\label{eq:norm-d2tu-1}
		\sum_{n=3}^{N}\norm{d_t^2\vu^n}^2_U\leq \frac{8}{7}\sum_{n=3}^{N}\norm{d_t\dot{\vu}^n}^2_U+\frac{2}{7}\norm{d_t^2\vu^2}^2_U.
	\end{equation}
\end{lemma}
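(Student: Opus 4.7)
The plan is to exploit the purely algebraic relation between the BDF-$2$ approximation $\dot\vu^n$ and the BDF-$1$ increment $d_t\vu^n$. From the definition \eqref{eq:BDF-2} we have $2s\dot\vu^n = 3(\vu^n-\vu^{n-1}) - (\vu^{n-1}-\vu^{n-2})$, which rewrites as
\begin{equation*}
2\dot\vu^n = 3\,d_t\vu^n - d_t\vu^{n-1}, \qquad \text{i.e.,}\qquad d_t\vu^n = \tfrac{2}{3}\dot\vu^n + \tfrac{1}{3}d_t\vu^{n-1}.
\end{equation*}
For part (a), I would simply apply the elementary inequality $\|a+b\|_U^2\le 2\|a\|_U^2+2\|b\|_U^2$ to this identity, obtaining
\begin{equation*}
\|d_t\vu^n\|_U^2 \le 2\cdot\tfrac{4}{9}\|\dot\vu^n\|_U^2 + 2\cdot\tfrac{1}{9}\|d_t\vu^{n-1}\|_U^2 = \tfrac{8}{9}\|\dot\vu^n\|_U^2 + \tfrac{2}{9}\|d_t\vu^{n-1}\|_U^2,
\end{equation*}
which is precisely \eqref{eq:norm-dtu-1}.

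For part (b), the plan is to apply the backward operator $d_t$ to the identity above. Since $d_t$ commutes with linear combinations, we get
\begin{equation*}
d_t^2\vu^n = \tfrac{2}{3}\,d_t\dot\vu^n + \tfrac{1}{3}\,d_t^2\vu^{n-1},
\end{equation*}
which is the natural ``second-order'' analogue. The same $2\|\cdot\|^2+2\|\cdot\|^2$ Cauchy--Schwarz bound then yields $\|d_t^2\vu^n\|_U^2\le \tfrac{8}{9}\|d_t\dot\vu^n\|_U^2+\tfrac{2}{9}\|d_t^2\vu^{n-1}\|_U^2$ for each $n\ge 3$.

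Summing this inequality from $n=3$ to $N$ creates a lagged sum $\sum_{n=3}^{N}\|d_t^2\vu^{n-1}\|_U^2 = \|d_t^2\vu^2\|_U^2 + \sum_{n=3}^{N-1}\|d_t^2\vu^n\|_U^2$, which I would reabsorb into the left-hand side. After moving the $\tfrac{2}{9}\sum_{n=3}^{N}\|d_t^2\vu^n\|_U^2$ to the left, the coefficient on the left becomes $1-\tfrac{2}{9}=\tfrac{7}{9}$, and dividing through produces the factors $\tfrac{8}{7}$ and $\tfrac{2}{7}$ that appear in \eqref{eq:norm-d2tu-1}. The only mild subtlety is bookkeeping on the index shift so that the boundary term is exactly $\|d_t^2\vu^2\|_U^2$ (and not, say, $\|d_t^2\vu^{N}\|_U^2$); this is straightforward because we iterate the recursion downward, not upward. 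I do not expect any real obstacle: the whole proof reduces to one algebraic identity plus Cauchy--Schwarz plus reabsorption, and no property of the flow \eqref{eq:discrete_flow_BDF2} or of the tangent-space constraint is needed.
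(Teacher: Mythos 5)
Your proposal is correct and follows essentially the same route as the paper: part (a) is the identity $2\dot\vu^n=3d_t\vu^n-d_t\vu^{n-1}$ plus the elementary quadratic bound, and part (b) applies $d_t$ to that identity and then sums-and-reabsorbs — which is exactly the content of the paper's Lemma on the sharper difference-equation estimate (the $k=2$ case) that the paper invokes with $a_n=d_t^2\vu^n$, $f_n=d_t\dot\vu^n$. The only cosmetic difference is that you carry out the reabsorption inline rather than citing that lemma; the arithmetic ($\tfrac{8}{9},\tfrac{2}{9}\to\tfrac{8}{7},\tfrac{2}{7}$ after dividing by $1-\tfrac29=\tfrac79$) and the boundary term $\norm{d_t^2\vu^2}_U^2$ all check out.
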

 \begin{proof}
The relation $2\dot{\vu}^n=3d_t\vu^n-d_t\vu^{n-1}$ \shuo{and Cauchy-Schwarz inequality implies \eqref{eq:norm-dtu-1}.}
Taking $d_t$ on both sides of $2\dot{\vu}^n=3d_t\vu^n-d_t\vu^{n-1}$ leads to $2d_t\dot \vu^n=3d_t^2\vu^n-d_t^2\vu^{n-1}$. \shuo{Then we apply Lemma \ref{lem:diff-eq-sol-sharper-estimate} with $a_n=d_t^2\vu^n$ and $f_n=d_t\dot \vu^n$ to obtain \eqref{eq:norm-d2tu-1}.} 
\end{proof}

\begin{remark}\label{remark:discrete-regularity}
Compared with the BDF-2 scheme for gradient flows in \cite{bartelsakrivis2023quadratic}, the BDF-2 scheme for accelerated gradient flow inherits stronger regularity in terms of temporal derivatives. More specifically, the method in \cite{bartelsakrivis2023quadratic} only guarantees an $\mathcal{O}(s^{-1})$ upper bound for $\norm{d_t\vu^n}_U^2$ or $\norm{\dot{\vu}^n}_U^2$, while the BDF-2 scheme for accelerated gradient flow  ensures an $\mathcal{O}(1)$ bound for them. 
More precisely, the sequences $\left\{\norm{\dot{\vu}^n}_U\right\}_{n\geq 2}$, $\left\{\norm{d_t\vu^n}_U\right\}_{n\geq 1}$ and $\left\{s\norm{d_t^2\vu^n}_U\right\}_{n\geq 2}$ all are uniformly bounded. To see this, we first notice that \eqref{eq:BDF2-stab-sum} guarantees the uniform boundedness of $\norm{\dot{\vu}^n}_U$. By Proposition \ref{prop:init-step}, $\norm{d_t\vu^1}_U$ is bounded. Using \eqref{eq:norm-dtu-1} recursively we conclude that $\norm{d_t\vu^n}_U$ is also uniformly bounded. The relation $\dot{\vu}^n-d_t\vu^n=(s/2)d_t^2\vu^n$ and the uniform boundedness of $\norm{d_t\vu^n}_U$ and $\norm{\dot{\vu}^n}_U$ immediately ensure the uniform boundedness of $s\norm{d_t^2\vu^n}_U$.
\end{remark}

\shuo{
The following theorem states an unconditional estimate of constraint violations for the BDF-2 scheme. We refer to Corollary \ref{cor:BDF2-cons-vio} for a conditional estimate. 
\begin{thm}\label{thm:BDF2_cons_vio_1}
    Let $\{\vu^n\}_{n\in\mathbb{N}}$ be a sequence of outputs produced by Algorithm \ref{Algorithm 2}. For every integer $N\geq 1$, it satisfies that 
 \begin{equation}\label{eq:bn-BDF2-orderO(s^2) estimate}
     \norm{B(\vu^N,\vu^N)-\vu_c }_Z\lesssim s^2.
 \end{equation}
\end{thm}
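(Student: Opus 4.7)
The plan is to combine the general constraint-violation identity from Corollary \ref{cor:BDF2-cons-vio} with the $G$-stability bound \eqref{eq:BDF2-stab-sum} and the auxiliary estimate from Lemma \ref{lem:norm-dtu-d2tu}(b), using the initial-step estimates of Propositions \ref{prop:init-step} and \ref{prop:trans-step} to close the initial contributions. Since Algorithm \ref{Algorithm 2} initializes with $d_t\vu^0=\bz$ and performs a BDF-1 initialization step followed by the transition step before running \eqref{eq:discrete_flow_BDF2}, the BDF-2 linearized constraint \eqref{eq:linear-cons} holds for $n\ge 2$ and the BDF-1 version \eqref{eq:linear-cons-p} holds for $n=1$, so Corollary \ref{cor:BDF2-cons-vio} applies.

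The first step is to apply \eqref{eq:bN-upper with initial velocity}; because $d_t\vu^0=\bz$, this reduces the task to establishing the uniform bound
\begin{equation*}
s^2\sum_{n=1}^{N}\norm{d_t^2\vu^n}_U^2\lesssim 1.
\end{equation*}
I would split this sum as $n=1$, $n=2$, and $n\geq 3$. For $n=1$, the identity $d_t^2\vu^1=d_t\vu^1/s$ (using $d_t\vu^0=\bz$) gives $s^2\norm{d_t^2\vu^1}_U^2=\norm{d_t\vu^1}_U^2$, which is bounded by Proposition \ref{prop:init-step}. For $n=2$, write $d_t^2\vu^2=(d_t\vu^2-d_t\vu^1)/s$ and use $2\dot\vu^2=3d_t\vu^2-d_t\vu^1$ together with Propositions \ref{prop:init-step} and \ref{prop:trans-step} to obtain $s^2\norm{d_t^2\vu^2}_U^2\lesssim\mathcal{M}(\vu^0,\vu^0)$. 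For the bulk term $n\geq 3$, apply Lemma \ref{lem:norm-dtu-d2tu}(b) to get
\begin{equation*}
s^2\sum_{n=3}^{N}\norm{d_t^2\vu^n}_U^2\leq\frac{8}{7}\,s^2\sum_{n=3}^{N}\norm{d_t\dot{\vu}^n}_U^2+\frac{2}{7}\,s^2\norm{d_t^2\vu^2}_U^2,
\end{equation*}
where the first sum on the right is uniformly controlled by $\mathcal{M}(\vu^0,\vu^0)$ via the $G$-stability estimate \eqref{eq:BDF2-stab-sum}, and the second term has already been handled. Multiplying the combined bound by $s^2$ and substituting into the reduction of \eqref{eq:bN-upper with initial velocity} yields \eqref{eq:bn-BDF2-orderO(s^2) estimate}.

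The main obstacle will be the proper accounting of the initial contributions at $n=1,2$, where the BDF-2 scheme is not yet in effect and hence \eqref{eq:BDF2-stab-sum} does not directly control $\norm{d_t^2\vu^n}_U^2$. The key observation making this work is that the acceleration framework automatically endows $\dot\vu^n$ (and thus $d_t\vu^n$) with $\mathcal{O}(1)$ control rather than the $\mathcal{O}(s^{-1/2})$ control typical of standard gradient flows — this is the content of Remark \ref{remark:discrete-regularity}. Everything else is a bookkeeping combination of the initialization estimates and the $G$-stability, with no further conditions on $s$ or $N$ required, so the resulting bound is genuinely unconditional.
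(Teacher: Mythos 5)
Your proposal is correct and follows essentially the same route as the paper's proof: reduce via \eqref{eq:bN-upper with initial velocity} to the uniform bound $s^2\sum_{n=1}^N\norm{d_t^2\vu^n}_U^2\lesssim 1$, handle $n=1,2$ with $d_t\vu^0=\bz$, the relation $2\dot{\vu}^2=3d_t\vu^2-d_t\vu^1$ and Propositions \ref{prop:init-step}--\ref{prop:trans-step}, and control the bulk $n\ge3$ by Lemma \ref{lem:norm-dtu-d2tu}(b) together with the stability estimate \eqref{eq:BDF2-stab-sum}. No gaps.
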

\begin{proof}
Using Lemma \ref{lem:norm-dtu-d2tu} (a) for $n=2$ and the Cauchy-Schwarz inequality, we estimate
\begin{equation}\label{eq:d2tu2}
		s^2\norm{d_t^2\vu^2}^2_U=\norm{d_t\vu^2-d_t\vu^1}^2_U\lesssim \norm{\dot{\vu}^2}^2_U+\norm{d_t\vu^1}^2_U.
\end{equation}
Note that $sd_t^2\vu^1=d_t\vu^1$ with the condition $d_t\vu^0=\bz$. 
Then by Lemma \ref{lem:norm-dtu-d2tu} (b), Proposition \ref{prop:init-step}, Proposition \ref{prop:trans-step} and the energy stability \eqref{eq:BDF2-stab-sum}, we derive 
\begin{equation}\label{eq:s2-d2tu}
		s^2\sum_{n=1}^{N}\norm{d_t^2\vu^n}^2_U\lesssim s^2\sum_{n=3}^{N}\norm{d_t\dot{\vu}^n}^2_U+s^2\norm{d_t^2\vu^2}^2_U+s^2\norm{d_t^2\vu^1}^2_U\leq C,
\end{equation}
with a generic constant $C>0$ independent of $s,N$. 
Substituting the discrete regularity \eqref{eq:s2-d2tu} into the constraint violation estimates \eqref{eq:bN-upper with initial velocity} yields \eqref{eq:bn-BDF2-orderO(s^2) estimate}.  
\end{proof}
}


\shuo{To conclude, Algorithm~\ref{Algorithm 2} extends naturally to higher-order BDF schemes by modifying $\dot{\vu}$ and $\hat{\vu}$ to their BDF-$k$ counterparts ($k \geq 3$). However, proving energy stability becomes challenging for these higher-order schemes due to the lack of $G$-stability. Consequently, only conditional estimates for constraint violations can be obtained, following the discussions in Section~\ref{sec:cons-vio-general} (see Corollary~\ref{cor:BDF3-cons-vio} and~\ref{cor:BDF4-cons-vio}). Unconditional estimates and convergence results analogous to Corollary~\ref{cor:limit-BDF2} remain unattainable for higher-order BDF versions of the Algorithm~\ref{Algorithm 2}.}

\setcounter{equation}{0}
\section{\shuo{Energy stable BDF-$k$ accelerated gradient methods}}\label{sec:modified} 
\shuo{This section presents a BDF-$k$ generalized ($k\geq2$) and modified version of Algorithm~\ref{Algorithm 2} in order to retain energy stability for higher-order BDF-$k$ schemes, namely when $k>2$. Sticking to the linearized constraint \eqref{eq:linear-cons}, the analysis in Section~\ref{sec:cons-vio-general} remains valid. The key adjustment adapts the third term in \eqref{eq:discrete_flow_BDF2} so that modified energy stability for arbitrary BDF-$k$ approximations is available.}

\shuo{This new form of energy stable accelerated gradient flow methods is as follows.
For an integer $k\geq1$, we compute $\dot{\vu}^n$ in the tangent space $\mathcal{F}_{\hat{\vu}^n}$, with definitions \eqref{eq:bdf-formula-extrapolation} and \eqref{eq:tangent_F}, through the following equation 
\begin{equation}\label{eq:discrete_flow_BDFk_modified}
	\innerprocuct{(\dot{\vu}^n-\dot\vu^{n-1})/s,\vw}_U+\innerprocuct{\frac{\alpha}{t_n}\dot{\vu}^n,\vw}_U+\mathcal{M} \left(\widetilde\vu^n,\vw\right)=0,
\end{equation}
for all $\vw\in\mathcal{F}_{\hat{\vu}^n}$, $t_n:=ns$, and $\widetilde\vu^n$ is defined as  
\begin{equation}\label{eq:def-tilde-un}
\widetilde\vu^n:=\sum_{j=0}^{k-1} \tilde\delta_j \vu^{n-j},
\end{equation}
with $\tilde\delta_j$ defined as in \eqref{eq:def-tilde-delta}. Compared to the scheme \eqref{eq:discrete_flow_BDF2}, \eqref{eq:discrete_flow_BDFk_modified} replaces $\vu^{n}$ in the third term by $\widetilde\vu^n$. Using the definition \eqref{eq:bdf-formula-extrapolation} for $\dot\vu^n$, it is straightforward to verify that
\begin{equation}\label{eq:udot-utilde}
s\dot\vu^n = \widetilde\vu^n - \widetilde\vu^{n-1}.
\end{equation}
With the choice $d_t\vu^0=\bz$, we formally have the convention that $\vu^{-1}:=\vu^0$, and therefore \eqref{eq:discrete_flow_BDFk_modified} defines for $n\geq k$. In practice, given $\vu^0\in\Ad$, we sequentially compute the initial values $\vu^1,\ldots,\vu^{k-1}$ using the corresponding BDF-$1$ to BDF-$(k-1)$ versions of \eqref{eq:discrete_flow_BDFk_modified}.
Furthermore, by substituting the expression $\delta_0\vu^n = s\dot\vu^n - \sum_{j=1}^k \delta_j \vu^{n-j}$ into the definition \eqref{eq:def-tilde-un} of $\widetilde\vu^n$ and utilizing the identities $\tilde\delta_j - \delta_j = \tilde\delta_{j-1}$ and $\delta_k = -\tilde\delta_{k-1}$, we obtain
\begin{equation}\label{eq:rewrite-tilde-un}
\widetilde\vu^n = s\dot\vu^n + \sum_{j=1}^{k-1} \tilde\delta_j \vu^{n-j}-\sum_{j=1}^{k} \delta_j \vu^{n-j}=s\dot\vu^n +  \sum_{j=1}^{k} \tilde\delta_{j-1} \vu^{n-j},
\end{equation}
in which $\dot\vu^n$ is the unknown for the equation in each iteration step. 
}

\shuo{
We summarize this generalized method in Algorithm \ref{Alg:modified}. 
\begin{algorithm}
		\leavevmode\newline(0)\textbf{Input}: $\vu^0\in\mathcal{A}$, $s>0$, $d_t\vu^0=\mathbf{0}$, $\alpha\ge3$, $\varepsilon>0$ and a $T_{max}>0$ that is sufficiently large.\\
        (1)\textbf{Initialization}: Compute $\vu^1,\ldots,\vu^{k-1}$ using the corresponding BDF-$1$ to BDF-$(k-1)$ versions of \eqref{eq:discrete_flow_BDFk_modified}. \\ \textbf{Set} $n=k$.\\   
		(2)\textbf{Compute} $\dot{\vu}^n\in\mathcal{F}_{\hat{\vu}^n}$ by the BDF-$k$ version of \eqref{eq:discrete_flow_BDFk_modified} with $\widetilde\vu^n$ given by \eqref{eq:rewrite-tilde-un}. \\
		(3)\textbf{Update} $\vu^n=s\delta_0^{-1}\dot\vu^n-\sum_{j=1}^{k}\delta_0^{-1}\delta_j \vu^{n-j}$. \\
		(4)\textbf{Stop when} $\seminorm{\frac{d_t}{2}\left.\left( \mathcal{M}(\widetilde\vu^n,\widetilde\vu^n)+\norm{\dot{\vu}^n}^2_U\right)\right.}\leq \varepsilon$ or $n>[T_{max}/s]$, \textbf{ otherwise set} $n\to n+1$ and \textbf{continue} with step (3). \\
        (5)\textbf{Output}: $\vu^n$.
        \caption{Energy stable BDF-$k$ accelerated gradient flow methods with tangent space update}\label{Alg:modified}
	\end{algorithm}
}    

\shuo{We next show that this method satisfies a modified energy stability.
\begin{thm}\label{thm:modified-stab} 
Let $\{\vu^n\}_{n\in\mathbb{N}}$ be a sequence of the outputs produced in Algorithm \ref{Alg:modified}. 
For every $n\geq k$, we have
    \begin{equation}\label{eq:modified-stab}
	\mathcal{M}(\widetilde\vu^n,\widetilde\vu^n)+\norm{\dot{\vu}^n}^2_U\leq \mathcal{M}(\widetilde\vu^{n-1},\widetilde\vu^{n-1})+\norm{\dot{\vu}^{n-1}}^2_U.
    \end{equation}
\end{thm}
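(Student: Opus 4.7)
The plan is to test the scheme \eqref{eq:discrete_flow_BDFk_modified} with $\vw = \dot\vu^n$, which is an admissible test function since Algorithm~\ref{Alg:modified} produces $\dot\vu^n \in \mathcal{F}_{\hat\vu^n}$ by construction. The key structural ingredient is the identity $s\dot\vu^n = \widetilde\vu^n - \widetilde\vu^{n-1}$ from \eqref{eq:udot-utilde}: the modification \eqref{eq:def-tilde-un} is engineered precisely so that the BDF-$k$ approximation $\dot\vu^n$ equals the backward difference of the new potential variable $\widetilde\vu^n$, which is exactly what makes the potential term telescope.

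After testing and multiplying through by $2s$, I would apply the standard binomial identity $2\innerprocuct{a, a-b}_U = \norm{a}_U^2 - \norm{b}_U^2 + \norm{a-b}_U^2$ to the inertial term (with $a = \dot\vu^n$ and $b = \dot\vu^{n-1}$), and its bilinear analogue $2\mathcal{M}(a, a-b) = \mathcal{M}(a,a) - \mathcal{M}(b,b) + \mathcal{M}(a-b,a-b)$, valid by the symmetry of $\mathcal{M}$, to the potential term after using \eqref{eq:udot-utilde} to rewrite $2s\,\mathcal{M}(\widetilde\vu^n, \dot\vu^n) = 2\mathcal{M}(\widetilde\vu^n, \widetilde\vu^n - \widetilde\vu^{n-1})$. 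The damping contribution $\tfrac{2s\alpha}{t_n}\norm{\dot\vu^n}_U^2$ is manifestly non-negative.

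Summing the three resulting pieces and rearranging yields the one-step energy identity
\begin{equation*}
\norm{\dot\vu^n}_U^2 + \mathcal{M}(\widetilde\vu^n,\widetilde\vu^n) + R_n = \norm{\dot\vu^{n-1}}_U^2 + \mathcal{M}(\widetilde\vu^{n-1},\widetilde\vu^{n-1}),
\end{equation*}
with remainder $R_n := \norm{\dot\vu^n - \dot\vu^{n-1}}_U^2 + \tfrac{2s\alpha}{t_n}\norm{\dot\vu^n}_U^2 + s^2\mathcal{M}(\dot\vu^n,\dot\vu^n)$, where the last term came from $\mathcal{M}(\widetilde\vu^n - \widetilde\vu^{n-1}, \widetilde\vu^n - \widetilde\vu^{n-1}) = s^2\mathcal{M}(\dot\vu^n,\dot\vu^n)$ by \eqref{eq:udot-utilde} and bilinearity. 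Each summand in $R_n$ is non-negative (the last by the coercivity in Assumption~\ref{assum:bilinear}), so $R_n \geq 0$ and \eqref{eq:modified-stab} follows immediately. The only genuine insight needed is recognizing that \eqref{eq:def-tilde-un} is calibrated precisely so that $\widetilde\vu^n - \widetilde\vu^{n-1} = s\dot\vu^n$; once this identity is in hand, the argument reduces to a clean one-step telescoping and bypasses entirely the $G$-matrix machinery underlying \eqref{eq:GM-stab}, which does not extend to arbitrary $k \geq 3$. I do not anticipate any further obstacle.
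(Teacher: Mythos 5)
Your proposal is correct and follows essentially the same route as the paper's proof: test \eqref{eq:discrete_flow_BDFk_modified} with $\vw=\dot\vu^n$, use the identity \eqref{eq:udot-utilde} together with the binomial formula on both the inertial and potential terms, and discard the non-negative remainder (your $R_n$ matches the paper's terms $\frac{2\alpha}{n}\norm{\dot\vu^n}_U^2+s^2\norm{d_t\dot\vu^n}_U^2+s^2\M{\dot\vu^n,\dot\vu^n}$ exactly). No issues.
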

\begin{proof} 
Choosing $\vw=\dot{\vu}^n$ in \eqref{eq:discrete_flow_BDFk_modified}, using the relation \eqref{eq:udot-utilde} and employing the binomial formula \eqref{binominal formula}, 
we obtain 
\begin{equation}\label{eq:modified-stab-onestep}
\begin{aligned}
	\left(\mathcal{M}(\widetilde\vu^n,\widetilde\vu^n)+\norm{\dot{\vu}^n}^2_U\right)+\frac{2\alpha}{n}\norm{\dot{\vu}^n}^2_U
	+s^2\norm{d_t\dot{\vu}^n}^2_U+s^2\M{\dot{\vu}^n,\dot{\vu}^n} =\mathcal{M}(\widetilde\vu^{n-1},\widetilde\vu^{n-1})+\norm{\dot{\vu}^{n-1}}^2_U,
\end{aligned}
\end{equation}
which immediately implies \eqref{eq:modified-stab}. 
\end{proof}
}

\shuo{
The energy stability immediately guarantees the vanishing velocity in the limit. 
\begin{cor}\label{cor:limit-BDFk-modified}
Let $\{\vu^n\}_{n\in\mathbb{N}}$ be a sequence of the outputs produced in Algorithm \ref{Alg:modified} and $s>0$ be fixed. As $n\to\infty$, $\dot\vu^n$ converges strongly to $\bz$ in $U$. For $2\leq k \leq 4$, there exists a $\vu^*\in U$ and a subsequence of $\vu^n$ such that $\vu^n\rightharpoonup\vu^*$ weakly in $U$.   
\end{cor}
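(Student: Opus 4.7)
The plan is to read both conclusions off the one-step modified-energy identity \eqref{eq:modified-stab-onestep}. Summing it from $n=k$ to $N$ and discarding the non-negative damping and dissipation terms gives, in particular, the two bounds
$$\mathcal{M}(\widetilde\vu^N,\widetilde\vu^N)\leq C,\qquad s^2\sum_{n=k}^{\infty}\mathcal{M}(\dot\vu^n,\dot\vu^n)\leq C,$$
with $C$ depending only on the (finitely many) initial values produced by the initialization loop of Algorithm \ref{Alg:modified}. For fixed $s>0$, the coercivity of $\mathcal{M}$ from Assumption \ref{assum:bilinear} turns the second estimate into $\sum_n \norm{\dot\vu^n}_U^2<\infty$, forcing each summand (and hence $\norm{\dot\vu^n}_U$) to tend to zero; this is the strong convergence $\dot\vu^n\to\bz$ in $U$.

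For the weak convergence of $\vu^n$, coercivity applied to the first bound yields $\norm{\widetilde\vu^n}_U\leq C$ uniformly in $n$. The next step is to transfer this bound to $\vu^n$ itself through the linear relation $\widetilde\vu^n=\sum_{j=0}^{k-1}\tilde\delta_j\vu^{n-j}$. Viewed as a linear inhomogeneous difference equation in $\vu^n$ driven by $\widetilde\vu^n$, Lemma \ref{lem:diff-eq-sol-general} supplies the exact representation
$$\vu^N=-\sum_{m=1}^{k-1}\sum_{\ell=k-m}^{k-1}\tilde\delta_\ell\,\eta_{N-\ell-m}\,\vu^m+\sum_{n=k}^N \eta_{N-n}\,\widetilde\vu^n.$$
Because every root $r_i$ of $\tilde\delta(z)$ satisfies $|r_i|>1$, the representation \eqref{eq:gamma-sol} forces $|\eta_n|\lesssim \rho^{-n}$ for some $\rho>1$, so $\sum_{n\ge 0}|\eta_n|<\infty$. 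Combining this summability with the uniform bound on $\norm{\widetilde\vu^n}_U$ and with the fixed initial values $\vu^1,\dots,\vu^{k-1}$, I obtain $\norm{\vu^N}_U\leq C$ independently of $N$. The Banach--Alaoglu theorem applied in the Hilbert space $U$ then extracts a subsequence converging weakly to some $\vu^*\in U$, which completes the argument.

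The main technical obstacle is justifying the summability $\sum_n|\eta_n|<\infty$ cleanly from \eqref{eq:gamma-sol}. Although the exponential decay of $\eta_n$ is morally immediate from $|r_i|>1$, one must verify that no repeated root of $\tilde\delta(z)$ spoils the bound, since repeated roots would introduce polynomial-in-$n$ prefactors (still summable, but requiring a slightly more careful estimate). The hypothesis $2\le k\le 4$ in the statement circumvents this subtlety: a direct inspection of the $\tilde\delta_j$ values in Table \ref{tab:bdf-coefficients} shows that for each such $k$ the roots of $\tilde\delta(z)$ are simple and of modulus strictly greater than one, so the purely exponential bound on $|\eta_n|$ applies verbatim and the transfer from $\widetilde\vu^n$ to $\vu^n$ goes through without further work.
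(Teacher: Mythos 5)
Your proof is correct. The first claim (strong convergence $\dot\vu^n\to\bz$) is obtained exactly as in the paper: telescope \eqref{eq:modified-stab-onestep}, keep the term $s^2\mathcal{M}(\dot\vu^n,\dot\vu^n)$, and invoke coercivity for fixed $s>0$. For the uniform bound on $\norm{\vu^n}_U$, however, you take a genuinely different route. The paper rewrites $\widetilde\vu^n$ as $\vu^n$ plus an $s$-weighted combination of $d_t\vu^{n-\ell}$ (its \eqref{eq:tildeun-rewrite}), and then controls $\sum_n\norm{d_t\vu^n}_U^2$ by applying the sharper difference-equation estimate of Lemma \ref{lem:diff-eq-sol-sharper-estimate} to the relation $\dot\vu^n=\sum_{j}\tilde\delta_j d_t\vu^{n-j}$; this is precisely where the restriction $2\le k\le 4$ enters, since the verification of condition \eqref{eq:an-sq-cond} in that lemma is only carried out for those $k$. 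You instead treat $\sum_{j=0}^{k-1}\tilde\delta_j\vu^{n-j}=\widetilde\vu^n$ directly as an inhomogeneous difference equation for $\vu^n$, apply the exact representation of Lemma \ref{lem:diff-eq-sol-general}, and use the absolute summability $\sum_n|\eta_n|<\infty$ — which you correctly note is stronger than the paper's \eqref{eq:sum_gamma_bdd} but follows from the geometric decay of the Taylor coefficients of $1/\tilde\delta(z)$, holomorphic on a disk of radius $\min_i|r_i|>1$ (Cauchy's estimates handle possible repeated roots without case inspection, so your worry there is easily dispatched). Your route buys generality: it bypasses Lemma \ref{lem:diff-eq-sol-sharper-estimate} entirely and would establish weak compactness for any $k\le 6$, not just $k\le 4$. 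What it does not deliver is the uniform $\ell^2$ bound $\sum_n\norm{d_t\vu^n}_U^2\le C$ of \eqref{eq:dtun-sum-est}, which the paper obtains as a by-product and reuses in the subsequent remark to show $d_t\vu^n\to\bz$ strongly and $\hat\vu^n\rightharpoonup\vu^*$; if you later need those facts you would have to supply that estimate separately.
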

\begin{proof}
Summing \eqref{eq:modified-stab-onestep} for $n$ and recalling the coercivity \eqref{assum:bilinear}, we derive
\begin{equation}\label{eq:dotu-sum-bound}
\sum_{n=k}^{\infty}\|\dot\vu^n\|_U^2\lesssim \mathcal{M}(\widetilde\vu^{k-1},\widetilde\vu^{k-1})+\norm{\dot{\vu}^{k-1}}^2_U. 
\end{equation}
Since $\vu^1,\ldots,\vu^{k-1}$ are generated by BDF-$1$ to BDF-$(k-1)$ versions of \eqref{eq:discrete_flow_BDFk_modified} respectively, we apply \eqref{eq:modified-stab} recursively for $n=k-1,\ldots,1$ with the BDF-$(k-1)$ to BDF-$1$ definitions of $\widetilde\vu^n$ and $\dot\vu^n$ respectively to conclude that $\mathcal{M}(\widetilde\vu^{k-1},\widetilde\vu^{k-1})+\norm{\dot{\vu}^{k-1}}^2_U\lesssim \mathcal{M}(\vu^{0},\vu^{0})$. Therefore, \eqref{eq:dotu-sum-bound} ensures that $\lim_{n\to\infty}\|\dot\vu^n\|_U=0$. 

By the definition \eqref{eq:def-tilde-un} and the relation \eqref{eq:sum-tilde-delta} of coefficients $\tilde\delta_j$, we derive 
\begin{equation}\label{eq:tildeun-rewrite}
\widetilde\vu^n=\vu^n-\vu^n+\sum_{j=0}^{k-1}\tilde\delta_j\vu^{n-j}=\vu^n+(\tilde\delta_0-1)\vu^n+\sum_{j=1}^{k-1}\tilde\delta_j\vu^{n-j}=\vu^n+s\sum_{\ell=0}^{k-2}(\sum_{m=0}^{\ell}\tilde\delta_m-1)d_t\vu^{n-\ell},
\end{equation}
which leads to 
\begin{equation}\label{eq:un-upper-tildeun}
\|\vu^n\|_U\lesssim \|\widetilde\vu^n\|_U+s\sum_{\ell=0}^{k-2}\|d_t\vu^{n-\ell}\|_U,
\end{equation}
where the hidden constant depends on $k$. 
We first note that $\|\widetilde\vu^n\|_U$ is uniformly bounded due to the modified stability \eqref{eq:modified-stab}. Towards estimating $\|d_t\vu^{n-\ell}\|_U$, we substitute \eqref{eq:def-tilde-un} into \eqref{eq:udot-utilde} to obtain 
\begin{equation}\label{eq:sdotu-rewrite}
\dot\vu^n=\sum_{j=0}^{k-1}\tilde\delta_j d_t\vu^{n-j}, 
\end{equation} 
and employ Lemma \ref{lem:diff-eq-sol-sharper-estimate} to derive that 
\begin{equation}\label{eq:dtun-sum-est}
\sum_{n=k}^{\infty}\|d_t\vu^n\|_U^2\lesssim \sum_{n=k}^{\infty}\|\dot\vu^n\|_U^2+\sum_{n=1}^{k-1}\|d_t\vu^n\|_U^2,
\end{equation}
which is uniformly bounded by summing the energy stability estimate \eqref{eq:modified-stab-onestep} and recalling the coercivity \eqref{assum:bilinear}. Therefore, $\|d_t\vu^{n}\|_U$ is uniformly bounded, and substituting it into \eqref{eq:un-upper-tildeun} yields $\|\vu^n\|_U\leq C$ with a generic constant $C$ depending on $k$ but not on $n$. Consequently, there exist a $\vu^*\in U$ and a subsequence of $\vu^n$ such that $\vu^n\rightharpoonup\vu^*$ weakly in $U$.  
\end{proof}
}

\shuo{
\begin{remark}
    The characterization of limits as in Corollary \ref{cor:convergence} extends directly to this modified algorithm \ref{Alg:modified} with $2\leq k\leq 4$ under the same assumptions. The primary technical difference involves establishing weak convergence $\widetilde{\vu}^n \rightharpoonup \vu^*$ and $\hat{\vu}^n \rightharpoonup \vu^*$ in $U$ (when $k=3,4$) for some subsequences.  
    
    In fact, by \eqref{eq:dtun-sum-est} and \eqref{eq:modified-stab-onestep} we observe that $d_t\vu^n\to\bz$ strongly in $U$. Then from \eqref{eq:tildeun-rewrite} and the fact that $\vu^n\rightharpoonup \vu^*$ up to a subsequence, we derive $\widetilde{\vu}^n \rightharpoonup\vu^*$ up to a subsequence as $n\to\infty$. 

    Moreover, we rewrite $\hat{\vu}^n$ using the definitions \eqref{eq:bdf-formula-extrapolation} and \eqref{eq:def-bdf-k-coeff} as 
    \begin{equation}\label{eq:uhatn-rewrite-k}
        \hat\vu^n=\vu^n-\vu^n+\hat\vu^n=\vu^n-s^kd^k_t\vu^n,
    \end{equation}
    with $k=3$ or $4$. Since $d^k_t\vu^n=s^{-1}(d^{k-1}_t\vu^n-d^{k-1}_t\vu^{n-1})$, we can rewrite $d^k_t\vu^n$ ($k=3$ or $4$) as linear combinations of $d_t\vu^{n-\ell}$ with $0\leq\ell\leq k-1$, which converge to $\bz$ strongly in $U$. Together with \eqref{eq:uhatn-rewrite-k}, this ensures that $\hat{\vu}^n \rightharpoonup \vu^*$ in $U$ up to a subsequence.    
\end{remark}
}

\shuo{
Conditional estimates for constraint violations of the arbitrary BDF-$k$ scheme can be established following the framework in Section \ref{sec:cons-vio-bdfn}, and in particular we refer to Corollary ~\ref{cor:BDF2-cons-vio}, ~\ref{cor:BDF3-cons-vio} and ~\ref{cor:BDF4-cons-vio} for examples of BDF-$2$ to BDF-$4$ schemes. 
We next show an unconditional estimate of constraint violations for Algorithm \ref{Alg:modified} with $2\leq k\leq 4$ in the following theorem.  
\begin{thm}\label{thm:BDFk_cons_vio_uncond}
    Let $\{\vu^n\}_{n\in\mathbb{N}}$ be a sequence of outputs produced by Algorithm \ref{Alg:modified}. For every integer $N\geq 1$, it satisfies that 
 \begin{equation}\label{eq:bn-BDFk-orderO(s^2) estimate}
     \norm{B(\vu^N,\vu^N)-\vu_c}_Z\lesssim s^2.
 \end{equation}
\end{thm}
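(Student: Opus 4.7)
The plan is to combine the general constraint violation estimates of Corollaries~\ref{cor:BDF2-cons-vio}, \ref{cor:BDF3-cons-vio}, and \ref{cor:BDF4-cons-vio} with uniform discrete regularity bounds derived from the modified energy stability (Theorem~\ref{thm:modified-stab}). Since Algorithm~\ref{Alg:modified} enforces the BDF-$k$ linearized constraint \eqref{eq:linear-cons} for $n\ge k$, enforces the BDF-$p$ counterparts \eqref{eq:linear-cons-p} at the initialization steps $p=1,\dots,k-1$, and starts with $d_t\vu^0=\bz$, the cited corollaries apply and reduce the problem to establishing, with constants independent of $s$ and $N$,
\begin{equation*}
s^2\sum_n \norm{d_t^2\vu^n}_U^2 \le C, \qquad s^4\sum_n \norm{d_t^3\vu^n}_U^2 \le C, \qquad s^2\max_n \norm{d_t^2\vu^n}_U^2 \le C.
\end{equation*}
Multiplying the first and third by $s^2$ and the second by $s^2$ as well, and substituting into the corollary estimates, then gives $\norm{B(\vu^N,\vu^N)-\vu_c}_Z \lesssim s^2$ in each of the cases $k=2,3,4$.

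First I would establish pointwise bounds. Summing the one-step modified energy identity \eqref{eq:modified-stab-onestep} from $n=k$ to $N$, and recursively using the analogous stability for the BDF-$p$ initialization substeps, yields $\sup_n \norm{\dot\vu^n}_U \le C$ together with $s^2\sum_n \norm{d_t\dot\vu^n}_U^2 \le C$. Applying Lemma~\ref{lem:diff-eq-sol-general} (extended to Sobolev-valued sequences as noted after Lemma~\ref{lem:diff-eq-sol-sharper-estimate}) to the difference equation \eqref{eq:sdotu-rewrite}, namely $\dot\vu^n = \sum_{j=0}^{k-1} \tilde\delta_j d_t\vu^{n-j}$, and exploiting the geometric decay of the Taylor coefficients $\eta_n$ together with the uniform bound \eqref{eq:sum_gamma_bdd}, I would conclude $\sup_n \norm{d_t\vu^n}_U \le C$. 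The identity $s\, d_t^2\vu^n = d_t\vu^n - d_t\vu^{n-1}$ then immediately yields $s\norm{d_t^2\vu^n}_U \le C$ uniformly, which handles the third required bound. The initial terms $\norm{d_t\vu^p}_U$ for $p=1,\dots,k-1$, needed as data in the difference-equation estimate, are controlled by testing the corresponding BDF-$p$ initialization equations against $\dot\vu^p$ and invoking the binomial identity \eqref{binominal formula}.

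For the summability bounds I would apply Lemma~\ref{lem:diff-eq-sol-sharper-estimate} to the equations obtained by differencing \eqref{eq:sdotu-rewrite}, namely $d_t\dot\vu^n = \sum_j \tilde\delta_j d_t^2\vu^{n-j}$ and $d_t^2\dot\vu^n = \sum_j \tilde\delta_j d_t^3\vu^{n-j}$. The first directly yields $s^2\sum_n \norm{d_t^2\vu^n}_U^2 \lesssim s^2\sum_{n=1}^{k-1}\norm{d_t^2\vu^n}_U^2 + s^2\sum_n \norm{d_t\dot\vu^n}_U^2 \le C$, after using the already-established pointwise bound on the initial terms. The step I expect to be the main obstacle is the second summability bound, needed only when $k=4$: Lemma~\ref{lem:diff-eq-sol-sharper-estimate} reduces $s^4\sum_n \norm{d_t^3\vu^n}_U^2$ to the control of $s^4\sum_n \norm{d_t^2\dot\vu^n}_U^2$, which the modified energy stability does not furnish directly. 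My plan there is to exploit the two-term telescoping $s^2\norm{d_t^2\dot\vu^n}_U^2 \le 2\norm{d_t\dot\vu^n}_U^2 + 2\norm{d_t\dot\vu^{n-1}}_U^2$; summing in $n$ and multiplying by $s^2$ converts this into $s^4\sum_n \norm{d_t^2\dot\vu^n}_U^2 \lesssim s^2\sum_n \norm{d_t\dot\vu^n}_U^2 \le C$, which is precisely what the energy stability provides, thereby closing the argument.
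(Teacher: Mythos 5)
Your proposal is correct and follows essentially the same route as the paper: both reduce the claim to the constraint-violation bounds of Corollaries~\ref{cor:BDF2-cons-vio}--\ref{cor:BDF4-cons-vio} and then establish the key discrete regularity bound $s^2\sum_n\norm{d_t^2\vu^n}_U^2\le C$ by applying Lemma~\ref{lem:diff-eq-sol-sharper-estimate} to the differenced form of \eqref{eq:sdotu-rewrite} combined with the modified energy stability. The only (harmless) divergence is in the $k=4$ case, where the paper converts both $s^6\sum_n\norm{d_t^3\vu^n}_U^2$ and $s^4\max_n\norm{d_t^2\vu^n}_U^2$ directly into that single regularity bound via $s^{2}\norm{d_t^3\vu^n}_U^2\leq 2\norm{d_t^2\vu^n}_U^2+2\norm{d_t^2\vu^{n-1}}_U^2$ and $\max\le\sum$, whereas you derive the two auxiliary bounds $s^4\sum_n\norm{d_t^3\vu^n}_U^2\le C$ and $\sup_n\norm{d_t\vu^n}_U\le C$ separately (via a second application of Lemma~\ref{lem:diff-eq-sol-sharper-estimate} and a telescoping bound on $d_t^2\dot\vu^n$) --- slightly longer, but equally valid.
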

\begin{proof}
Taking $d_t$ on both sides of \eqref{eq:sdotu-rewrite}, we obtain 
\begin{equation}\label{eq:sdtdotu}
d_t\dot\vu^n=\sum_{j=0}^{k-1}\tilde\delta_j d^2_t\vu^{n-j},
\end{equation}
which is a linear inhomogeneous difference equation for $d^2_t\vu^{n}$. We apply Lemma \ref{lem:diff-eq-sol-sharper-estimate} and energy stability \eqref{eq:modified-stab-onestep} to derive
\begin{equation}\label{eq:d2tun-sum-est}
s^2\sum_{n=k}^{N}\|d^2_t\vu^n\|_U^2\lesssim s^2\sum_{n=k}^{N}\|d_t\dot\vu^n\|_U^2+s^2\sum_{n=1}^{k-1}\|d^2_t\vu^n\|_U^2\leq C,
\end{equation}
with a generic constant $C>0$ independent of $s,N$. 
Substituting the discrete regularity \eqref{eq:d2tun-sum-est} into the constraint violation estimates \eqref{eq:bN-upper with initial velocity}, \eqref{eq:cons-upper-BDF3} yields \eqref{eq:bn-BDFk-orderO(s^2) estimate} for $k=2,3$. When $k=4$, we further employ the estimate $s^{2}\norm{d_t^3\vu^n}_U^2\leq 2\norm{d_t^2\vu^n}_U^2+2\norm{d_t^2\vu^{n-1}}_U^2$ and then the discrete regularity \eqref{eq:d2tun-sum-est} in \eqref{eq:cons-upper-BDF4} to conclude with \eqref{eq:bn-BDFk-orderO(s^2) estimate}.   
\end{proof}
}

\section{Numerical results}\label{sec:numerical results}
\setcounter{equation}{0}
In this section, we verify the theoretical results proved in previous sections by two benchmark examples in the set up of Example \ref{exampjle:anisotropic Dirichlet energy} and Example \ref{ex:plates}. 
\shuo{We reiterate that the computational acceleration effect of the proposed methods has been thoroughly investigated in \cite{DonGuoYan24}; our present focus is on quantifying constraint violations. First, we introduce some necessary notations.} 

The final outputs of Algorithms are denoted as $\vu_h^*$. The key quantity that we want to investigate is the constraint violation, which is defined as
\begin{equation}\label{eq:cons-vio-comp-2}
\delta^{cons}=\norm{B(\vu_h^*,\vu_h^*)-\shuo{\vu_c}}_{L^1(\Omega;\mathbb{R}^{n\times k})},
\end{equation}
\shuo{as $Z:=L^1(\Omega;\mathbb{R}^{n\times k})$ in these examples.}
The $L^1$ norm in \eqref{eq:cons-vio-comp-2} are practically evaluated with proper quadrature rules in each example, depending on the spatial discretization methods. 
We use different subscripts $GF$ and $AF$ to distinguish quantities for gradient flows or the proposed accelerated gradient flows for the purpose of comparison. 
We use $eoc$ to denote the experimental convergence order of the constraint violation. Moreover, the following quantities for accelerated gradient flows
\begin{equation}\label{eq:comp-quantity}
{\sigma^k=\sum_{n=3}^{N}\norm{d_t^k\vu^n_h}_U^2}\; \text{ and }\; \rho=\max\limits_{1\leq n\leq N}\norm{d_t^2\vu^n_h}_U^2,
\end{equation}
are calculated to reflect the discrete regularity assumptions for BDF-2/3/4 methods in the estimation of constraint violations, \shuo{where $N$ denotes the total number of iterations.} By computing \eqref{eq:comp-quantity}, we are able to validate if the regularity assumptions are satisfied or not by the considered examples.

\subsection{Anisotropic Dirichlet energy}
We start with an example of anisotropic Dirichlet energy.
	\begin{example}\label{example:anisotropic Dirichlet energy-comp}
 Consider the constrained minimization problem of anisotropic Dirichlet energy as described in Example \ref{exampjle:anisotropic Dirichlet energy}.
		For the spatial discretization we use the conforming finite element spaces of continuous elementwise affine functions.
		 
        In this example, we set $m=3,d=2,\Omega=(-1/2,1/2)^2,\Gamma^D=\partial \Omega$ and $\vu|_{\Gamma^D}=\vm|_{\partial \Omega}$, where 
        \begin{equation*}\label{eq:def-bc-dirichlet}
        \mathbf{m}(x)=(m_1(x),m_2(x),m_3(x)):=\frac{1}{1+\seminorm{x}^2}
		\begin{pmatrix}
			\sqrt{2}(x_1-x_2),\sqrt{2}(x_1+x_2),1-\seminorm{x}^2
		\end{pmatrix}^T, \; \text{ for } x\in \Omega \cup \partial \Omega.
        \end{equation*}
        We take an initial state $\vu_0\in\mathcal{A}$ where $\vu_0:=\frac{(m_1g_1,m_2g_2,m_3g_3)}{|(m_1g_1,m_2g_2,m_3g_3)|}$ with $\mathbf{g}(x)=(g_1(x),g_2(x),g_3(x))$ defined as 
        $$\mathbf{g}:=(1,1,1)-100(x_1-\frac12)(x_1+\frac12)(x_2-\frac12)(x_2+\frac12)(\sin (\frac12\pi x_1),8\sin (\frac12\pi x_2),16(x_1-x_2)\cos(8\pi (x_1+x_2)).$$
	In the tests, we choose $\vM=
		\begin{pmatrix}
			1&0\\
			0&10\\
		\end{pmatrix}$.
        A uniform triangulation $\mathcal{T}_h$ of $\Omega$ into $8192$ right-angled triangles is employed, and the discrete initial state $\vu^0_h$ is the nodal interpolation of $\vu_0$.
	\end{example}

We compare the computational performance, in particular the constraint violation in the  projection-free gradient flows and accelerated gradient flows with BDF-1 and BDF-2 scheme for this example and various time steps $s$. The \shuo{gradient flows are} discussed in \cite{bartels2016projection} and \cite{bartelsakrivis2023quadratic}. We set the stopping tolerance to be $\varepsilon=10^{-8}$ in the following simulations. Both the $L^2$-flow and $H^1$-flow are tested: in the former case, we take the flow metric to be $L^2$-inner product $(\cdot,\cdot)_{L^2}$ in the place of $(\cdot,\cdot)_U$ in equations like \eqref{eq:discrete_flow_BDF2}; for the latter case, $U=H^1(\Omega;\mathbb{R}^m)$. The results are reported in Table \ref{tab:BDF1_Anisotropic Dirichlet} and Table \ref{tab:BDF2_Anisotropic Dirichlet}. 
Moreover, we test the BDF-3 and BDF-4 methods for both projection-free gradient flows and \shuo{the energy stable accelerated gradient flows as in Algorithm \ref{Alg:modified}} with $H^1$ flow metric, and the results are presented in Table \ref{tab:BDF3_Anisotropic Dirichlet} and Table \ref{tab:BDF4_Anisotropic Dirichlet}.

\begin{center}
	\begin{longtable}{cccc cccc}
		\label{tab:BDF1_Anisotropic Dirichlet}
		$s$&  $\delta^{cons}_{GF}$& $eoc_{GF}$&$E_{GF}[\vu_h^*]$&$s$& $\delta_{AF}^{cons}$&$eoc_{AF}$&$E_{AF}[\vu_h^*]$\\
		\hline
		\hline
		\multicolumn{8}{c}{BDF-1 methods for $L^2$-gradient/accelerated gradient flow with $\alpha=25$}\\
		$2^{-10}$&1.118e-00&-&474.8&$2^{-5}$&4.870e-02&-&142.7\\
		$2^{-11}$&8.526e-01&0.39&405.1&$2^{-6}$&2.536e-02&0.94&60.26\\
		$2^{-12}$&6.145e-01&0.47&301.1&$2^{-7}$&1.311e-03&0.95&30.82\\
		$2^{-13}$&4.080e-01&0.59&199.3&$2^{-8}$&6.499e-03&1.01&21.01\\
		$2^{-14}$&2.539e-01&0.68&120.7&$2^{-9}$&3.267e-03&0.99&17.92\\
		\hline	
		\multicolumn{8}{c}{BDF-1 methods for $H^1$-gradient/accelerated gradient flow with $\alpha=25$}\\
		$2^{-1}$&1.510e-00& - &258.0&$2^{-1}$&4.004e-01&-&42.27\\
		$2^{-2}$&9.728e-01&0.63&179.2&$2^{-2}$&1.999e-01&1.00&25.07\\
		$2^{-3}$&6.081e-01&0.68&92.68&$2^{-3}$&9.917e-02&1.01&17.44\\
		$2^{-4}$&3.746e-01&0.70&47.72&$2^{-4}$&4.934e-02&1.01&17.55\\
		$2^{-5}$&1.982e-01&0.92&27.55&$2^{-5}$&2.460e-02&1.00&16.82\\
		\hline      
		\caption{Example \ref{example:anisotropic Dirichlet energy-comp}: comparison between gradient flows and \shuo{the accelerated gradient flow (Algorithm \ref{Algorithm 1})} with BDF-1 scheme.}
	\end{longtable}
\end{center}

We summarize the observations from the computational results in Tables \ref{tab:BDF1_Anisotropic Dirichlet}--\ref{tab:BDF4_Anisotropic Dirichlet} of these projection-free methods as follows. 

    For gradient flows, the constraint violation is asymptotically $\order{s}$ when using the BDF-1 discretization and not better than $\order{s^2}$ when using the BDF-2, BDF-3 and BDF-4 schemes. For accelerated gradient flows, with BDF-1 method the constraint violation achieves $\order{s}$, with BDF-2 and BDF-3 schemes it is asymptotically $\order{s^3}$, and it is $\order{s^4}$ when using the BDF-4 discretization. The observed orders are consistent with the unconditional/conditional estimates of constraint violations that we have proved in previous sections for BDF-1, BDF-2, BDF-3 and BDF-4 methods. 

\begin{center}
    \begin{longtable}{ccccc ccccc}
		\label{tab:BDF2_Anisotropic Dirichlet}
		$s$&  $\delta^{cons}_{GF}$& $eoc_{GF}$&$s^2\sigma^2_{GF}$&$E_{GF}[\vu_h^*]$&$s$& $\delta_{AF}^{cons}$&$eoc_{AF}$&$s\sigma^2_{AF}$&$E_{AF}[\vu_h^*]$\\
		\hline
		\hline
		\multicolumn{10}{c}{BDF-2 methods for $L^2$-gradient/accelerated gradient flow with $\alpha=25$}\\
		$2^{-10}$&1.974e-00&-&1.272e+06&489.7&$2^{-5}$&2.324e-01&-&4.174e+03&144.5\\
		$2^{-11}$&1.136e-00&0.58&3.348e+06&473.4&$2^{-6}$&5.769e-02&2.01&7.309e+03&35.91\\
		$2^{-12}$&7.941e-01&0.73&7.954e+06&438.3&$2^{-7}$&1.231e-02&2.23&1.119e+04&17.95\\
		$2^{-13}$&4.324e-01&0.88&1.685e+07&358.4&$2^{-8}$&2.150e-03&2.52&1.470e+04&16.45\\
		$2^{-14}$&2.129e-01&1.02&3.144e+07&206.3&$2^{-9}$&3.191e-04&2.75&1.695e+04&16.37\\
		$2^{-15}$&9.697e-02&1.13&5.300e+07&102.5&$2^{-10}$&4.298e-05&2.89&1.805e+04&16.36\\
            $2^{-16}$&4.140e-02&1.23&8.312e+07&40.23&$2^{-11}$&5.554e-06&2.95&1.856e+04&16.36\\
		\hline	
		\multicolumn{10}{c}{BDF-2 methods for $H^1$-gradient/accelerated gradient flow with $\alpha=25$}\\
		$2^{-0}$&4.493e-00&-&2.070e+03&407.8&$2^{-0}$&2.204e-01&-&112.6&28.04\\%
		$2^{-1}$&2.779e-00&0.69&5.332e+03&302.0&$2^{-1}$&3.649e-02&2.47&141.5&17.26\\
		$2^{-2}$&1.359e-00&1.03&1.143e+04&205.6&$2^{-2}$&5.688e-03&2.68&162.2&16.46\\
		$2^{-3}$&5.591e-01&1.28&2.018e+04&92.64&$2^{-3}$&7.931e-04&2.84&172.3&16.37\\
		$2^{-4}$&2.082e-01&1.43&2.920e+04&32.01&$2^{-4}$&1.039e-04&2.93&176.4&16.36\\
		$2^{-5}$&6.472e-02&1.69&3.544e+04&18.77&$2^{-5}$&1.325e-05&2.97&178.1&16.36\\
		$2^{-6}$&1.845e-02&1.81&3.851e+04&16.76&$2^{-6}$&1.670e-06&2.98&178.9&16.36\\
		\hline    		
    \caption{Example \ref{example:anisotropic Dirichlet energy-comp}: comparison between gradient flow and \shuo{the accelerated gradient flow (Algorithm \ref{Algorithm 2})} with BDF-2 scheme.}
    \end{longtable}
\end{center}

\begin{center}
\begin{longtable}{ccccc ccccc}
\label{tab:BDF3_Anisotropic Dirichlet}
$s$&  $\delta^{cons}_{GF}$& $eoc_{GF}$&$s^2\sigma^2_{GF}$&$E_{GF}[\vu^*_h]$&
$s$&  $\delta^{cons}_{AF}$& $eoc_{AF}$&$s\sigma^2_{AF}$&$E_{AF}[\vu^*_h]$
\\
\hline
\hline
\multicolumn{10}{c}{BDF-3 method for $H^1$-gradient/\shuo{accelerated gradient flow} with $\alpha=25$}\\
$2^{-1}$&1.260e-00&-&4.358e+03&130.4&$2^{-1}$&1.295e-02&-&121.7&16.24\\
$2^{-2}$&5.663e-01&1.15&8.594e+03&87.80&$2^{-2}$&3.752e-03&1.79&147.3&16.30\\
$2^{-3}$&2.361e-01&1.26&1.502e+04&42.21&$2^{-3}$&6.628e-04&2.50&162.9&16.35\\
$2^{-4}$&8.614e-02&1.45&2.193e+04&21.80&$2^{-4}$&9.581e-05&2.79&171.1&16.36\\
$2^{-5}$&2.819e-02&1.61&2.748e+04&17.32&$2^{-5}$&1.274e-05&2.91&175.4&16.36\\
$2^{-6}$&8.673e-03&1.70&3.114e+04&16.56&$2^{-6}$&1.639e-06&2.96&177.5&16.36\\
\hline
\caption{{Example \ref{example:anisotropic Dirichlet energy-comp}: comparison between gradient flow and \shuo{the accelerated gradient flow (Algoritm \ref{Alg:modified})} with BDF-3 scheme. }}
\end{longtable}
\end{center}

\begin{center}
	\begin{longtable}{ccccc ccccc}
		\label{tab:BDF4_Anisotropic Dirichlet}
		$s$&  $\delta^{cons}_{GF}$& $eoc_{GF}$&$s^2\rho_{GF}$&$s^4\sigma^3_{GF}$&
		$s$&  $\delta^{cons}_{AF}$& $eoc_{AF}$&$\rho_{AF}$&$s^2\sigma^3_{AF}$\\
		\hline
		\hline
		\multicolumn{10}{c}{BDF-4 method for $H^1$-gradient/\shuo{ accelerated gradient flow} with $\alpha=25$}\\
		$2^{-1}$&1.110e-00&-   &2.772e+03&2.558e+03&$2^{-1}$&7.765e-03&-&45.02&37.55\\
		$2^{-2}$&6.757e-01&0.72&6.366e+03&3.044e+03&$2^{-2}$&4.273e-04&4.18&51.87&35.88\\
		$2^{-3}$&2.774e-01&1.28&1.195e+04&3.211e+03&$2^{-3}$&3.718e-05&3.52&60.46&25.74\\
		$2^{-4}$&1.067e-01&1.38&1.846e+04&2.817e+03&$2^{-4}$&1.756e-06&4.40&63.02&16.00\\
		$2^{-5}$&3.400e-02&1.65&2.428e+04&1.912e+03&$2^{-5}$&7.712e-08&4.51&64.01&9.382\\
		$2^{-6}$&9.842e-03&1.79&2.848e+04&1.095e+03&$2^{-6}$&4.372e-09&4.14&64.49&6.039\\
		\hline	
		\caption{{Example \ref{example:anisotropic Dirichlet energy-comp}: comparison between gradient flow and \shuo{the accelerated gradient flow (Algoritm \ref{Alg:modified})} with BDF-4 scheme. }}
	\end{longtable}
\end{center}

    The accelerated gradient flow (AF) outperforms the gradient flow (GF) in terms of constraint violations. From Tables \ref{tab:BDF1_Anisotropic Dirichlet} and \ref{tab:BDF2_Anisotropic Dirichlet},
    GF requires a much smaller $s$ in order to observe an asymptotic linear/quadratic consistency in constraints for BDF-1/2, respectively, while AF reaches either the same or better rates with much looser step sizes.
   Moreover, AF provides higher order convergence compared to GF with BDF-2/3/4 schemes. 
    
    In all the cases, the $H^1$-flow metric provides a better computational performance than the $L^2$-flow metric, as the latter requires a much smaller $s$ to reach the constraint violation at the same level, compared to the former. 
    
    In the tests with BDF-2/3 methods, it is clear that $s\sigma^2_{AF}$ is uniformly bounded so that the discrete regularity assumption in our theory for accelerated gradient flows is satisfied. Moreover, in Table \ref{tab:BDF4_Anisotropic Dirichlet}, $\rho_{AF}$ and $s^2\sigma^3_{AF}$ are also uniformly bounded, which implies that the discrete regularity assumption \eqref{eq:discrete-regularity-BDF4} also holds for this example to attain $\order{s^4}$ convergence for the \shuo{BDF-4 version of Algorithm \ref{Alg:modified}}. 
    
    Recall that the method developed for estimating the constraint violation, as outlined in Section \ref{sec:cons-vio-bdfn}, \shuo{exploits only the algebraic structure of quadratic constraints in \eqref{eq:admissible_set} and the linearized constraint \eqref{eq:linear-cons} with BDF approximations and extrapolations; equations from iterative methods are only required} in the final step to establish discrete regularity and determine constraint consistency orders. The columns $s^2\sigma^2_{GF}$, $s^2\rho_{GF}$ and $s^4\sigma^3_{GF}$ for gradient flows with BDF-2/3/4 schemes reflect the practical discrete regularity ensured by the original gradient flow for this example, thereby justifying the reduced order $eoc_{GF}$ for these schemes compared to accelerated gradient flows. 

In this example, we infer that the exact solution may exhibit a strong fit to the linear FEM space, as we observe no obstruction to the predicted convergence rates when fixing the spatial refinement while decreasing the temporal step size. Additional tests (not reported for brevity) indicate that highly accurate solutions can be achieved with even coarser meshes; specifically, the constraint violation $\delta^{cons}_{AF}$ can reach $10^{-14}$ with a mesh of $512$ elements and a very small time step. Time refinement is particularly dominant in this case. However, as illustrated in the subsequent example, more complex problems may exhibit different phenomena.


\subsection{Prestrained plates}
We next test the proposed methods with the model of prestrained plates. In particular, with this example, we see the influence of spatial discretization refinements on the constraint violations.
\begin{example}[Prestrained plates with clamped boundary]\label{ex:plates-comp-2}
We consider the prestrained plates model as described in Example \ref{ex:plates}.
The domain is taken as $\Omega:=(-5,5)\times(-2,2)\subset\mathbb{R}^2$, and we impose clamped boundary condition on the side $\Gamma^D:=\{x_1=-5\}$:
\begin{equation}\label{eq:clamped-BC}
\phi(x_1,x_2)=(x_1,x_2,0)^T \quad \mbox{and} \quad \psi=[I_2,\mathbf{0}]^T \qquad (x_1,x_2) \in \Gamma^D.
\end{equation}
We take metric $A$ to be 
\begin{equation}\label{eq:metric-ex3}
A(x_1,x_2)=\begin{bmatrix}
1+c^2(2(x_1+5)(x_1-2)+(x_1+5)^2)^2       & 0 \\
0       & 1
\end{bmatrix},
\end{equation}
with the parameter $\shuo{c=10^{-2}}$. 
The initial state is $\vy^0(x_1,x_2)=(x_1,x_2,c(x_1+5)^2(x_1-2))$ satisfying $\I[\vy^0]=A$ and the boundary condition \eqref{eq:clamped-BC}. 
We use Morley finite element method \cite{morley1968triangular,brenner2013morley,DonGuoYan24} for spatial discretization of this model.
\end{example}

We set the stopping tolerance parameter $\varepsilon=10^{-6}$ in this example and take $\shuo{\alpha=16}$ in Algorithm \ref{Algorithm 2}. The results are reported in Table \ref{tab:plates}.


\setcounter{table}{3}
\begin{table}[ht]
\begin{minipage}{0.42\textwidth}
	\begin{longtable}{ccccc}
		$s$&$\delta^{cons}_{AF}$& $eoc_{AF}$ & $s\sigma^2_{AF}$ &$E_{AF}$\\
		\hline
		\hline
		\multicolumn{5}{c}{$512$ elements}\\
        $3.2$ & 1.894e-00 & - & 7.329e-03 &2.243e-01 \\
        $1.6$ & 3.646e-01 & 2.38 & 1.046e-02 &2.117e-01 \\
		$0.8$ & 6.857e-02 & 2.41 & 1.244e-02 &2.100e-01 \\
		$0.4$ & 2.284e-02 & 1.59 & 1.388e-02 &2.098e-01 \\
		$0.2$ & 1.966e-02 & 0.22 & 1.460e-02 &2.098e-01 \\
		$0.1$ & 1.959e-02 & 0.01 & 1.492e-02 &2.098e-01 \\
		\hline	
		\multicolumn{5}{c}{$2048$ elements}\\
        $3.2$ & 1.882e-00 & - & 7.332e-03 &2.234e-01 \\
        $1.6$ & 3.535e-01 & 2.41 & 1.046e-02 &2.108e-01  \\
        $0.8$ & 5.757e-02 & 2.62 & 1.245e-02 &2.091e-01 \\
		$0.4$ & 1.120e-02 & 2.36 & 1.389e-02 &2.089e-01 \\
		$0.2$ & 5.088e-03 & 1.14 & 1.462e-02 &2.088e-01 \\
		$0.1$ & 4.908e-03 & 0.05 & 1.494e-02 &2.088e-01 \\
		\hline	    
	\end{longtable}
 \end{minipage}
 \hspace{0.08\textwidth}
 \begin{minipage}{0.42\textwidth}
	\begin{longtable}{ccccc}
		$s$&$\delta^{cons}_{AF}$& $eoc_{AF}$ & $s\sigma^2_{AF}$ &$E_{AF}$\\
		\hline
		\hline
		\multicolumn{5}{c}{$8192$ elements}\\
        $1.6$ & 3.507e-01 &-& 1.046e-02 &2.106e-01 \\
        $0.8$ & 5.484e-02 & 2.68 & 1.246e-02 &2.089e-01 \\
		$0.4$ & 8.445e-03 & 2.70 & 1.389e-02 &2.086e-01 \\
		$0.2$ & 1.913e-03 & 2.14 & 1.462e-02 &2.086e-01 \\
		$0.1$ & 1.235e-03 & 0.63 & 1.494e-02 &2.086e-01 \\
		$0.05$ & 1.227e-03 & 0.01 & 1.508e-02 &2.086e-01 \\
		\hline	
		\multicolumn{5}{c}{$32768$ elements}\\
        $1.6$ & 3.501e-01 &-& 1.047e-02 &2.105e-01 \\
        $0.8$ & 5.416e-02 & 2.69 & 1.246e-02 &2.088e-01 \\
		$0.4$ & 7.761e-03 & 2.80 & 1.390e-02 &2.086e-01 \\
		$0.2$ & 1.216e-03 & 2.67 & 1.462e-02 &2.086e-01 \\
		$0.1$ & 3.636e-04 & 1.74 & 1.494e-02 &2.086e-01 \\
		$0.05$ & 3.071e-04 & 0.24 & 1.508e-02 &2.086e-01 \\
		\hline	    
	\end{longtable}
 \end{minipage}
 \caption{Example \ref{ex:plates-comp-2}: accelerated gradient flows with BDF-2 scheme \shuo{(Algorithm \ref{Algorithm 2})} for various spatial discretization refinements.}
 \label{tab:plates}
\end{table}
\begin{table}[ht]
    \centering
    \begin{tabular}{ccccc}
      Method & $s$ & $N$ & $\delta^{\text{cons}}$ & $E$\\ \hline
      GF-BDF1 & $1/1280$ & $45943$ & 2.071e-02 & 2.0980e-01 \\ 
      GF-BDF2 & $1/40$ & $617$ & 2.068e-02  & 2.0977e-01\\
      AF-BDF1 & $1/1280$ & $23296$ & 2.136e-02 & 2.0981e-01 \\
      AF-BDF2 & $1/5$ & $159$ & 1.966e-02 & 2.0976e-01\\ \hline
    \end{tabular}
    \caption{\shuo{Example \ref{ex:plates-comp-2}: comparison between different methods. ``GF-BDF1'' stands for the classical backward Euler projection-free gradient flow methods widely used in the literature, ``GF-BDF2'' the BDF-2 projection-free gradient flows investigated in \cite{bartelsakrivis2023quadratic}, ``AF-BDF1'' the Algorithm \ref{Algorithm 1} with $\alpha=3$ and ``AF-BDF2'' the Algorithm \ref{Algorithm 2} with $\alpha=16$.}}
    \label{tab:limit-cons}
\end{table}
   
From Table \ref{tab:plates}, we observe that for each fixed mesh refinement, the empirical order of convergence ($eoc$) decreases as the time step $s$ reducing to a certain level. This behavior indicates that the constraint violation $\delta^{cons}$ stabilizes and approaches its limit depending on the current spatial discretization size.
{Taking into account the computational load of this example, the spatial discretization is refined in 4 levels with the finest to be 32768 elements in our tests.
We can see clearly that the convergence rate of the constraint violations gets improved when finer mesh is applied.
Using the finest mesh, we achieve a rate of {nearly} $\order{s^3}$ prior to approaching the limitations imposed by the mesh size.} In contrast, for coarser meshes, such as those with fewer than $32768$ elements, the algorithm produces slightly smaller rates.

Examining the stabilized $\delta^{cons}$ ({corresponding to the smallest $s$}) for each spatial refinement, we observe that $\delta^{cons}$ behaves as $\order{h^2}$, where $h$ denotes the mesh size. Furthermore, it is evident that $s\sigma^2$ remains uniformly bounded across all cases, thereby satisfying the discrete regularity assumption stated in \eqref{eq:discrete regularity}. Overall, this test for the Example \ref{ex:plates-comp-2} demonstrates that, for complex problems like the prestrained plates model, spatial discretization significantly influences the computational efficiency of the proposed algorithms. 

\shuo{
Furthermore, Table~\ref{tab:limit-cons} compares the time step size $s$ and iteration number $N$ required to reach the limiting constraint violation $\delta^{\text{cons}} \approx 0.02$ for various methods in Example~\ref{ex:plates-comp-2} with a $512$-element mesh and $\varepsilon = 10^{-6}$. 

We aim to compare the efficiency of these projection-free methods when they reach a comparable accuracy.  
The results demonstrate that: (1) BDF2 methods significantly outperform BDF1, and (2) accelerated flows are more efficient than gradient flows in each case. Notably, even for the same time step $s$, accelerated flows usually converge faster than the classical gradient flows for this example. In fact, this acceleration effect can be further enhanced through backtracking strategies, and we refer to \cite{DonGuoYan24} for a more detailed computational investigation.

}

\section{Conclusion}
\shuo{This paper has proposed and analyzed a family of novel algorithms that integrate strategies from multiple fields to solve a class of non-convex energy minimization problems, namely the quadratic energy minimization with quadratic equality constraints. We developed a general framework for estimating constraint violations in arbitrary BDF-$k$ projection-free iterative schemes.
When applied to accelerated gradient flows, this framework has demonstrated that the accelerated flows achieve improved error bounds in constraint violation compared to existing projection-free gradient flows. 
Specifically, we have established unconditional/conditional constraint violation estimates, modified energy stability, and weak convergence for BDF-$2,3,4$ schemes of the accelerated flows. 
Numerical simulations have confirmed these theoretical results and shown that our methods can achieve higher accuracy with reduced computational cost.
  
We note that several theoretical aspects of the optimization problem remain open, including the qualitative and quantitative error estimates of the proposed flows' trajectories to solutions of the non-convex variational problems. Furthermore, the formally introduced continuous flows constitute a class of hyperbolic partial differential equations that may inspire additional theoretical and numerical investigations.
}


\subsection*{Acknowledgement}
\shuo{
The authors thank the anonymous reviewers for the detailed and insightful comments which helped to improve the paper. 
The authors also thank Hansen Li for the discussions inspiring the energy stable BDF-k accelerated schemes.
}
The work of G. Dong and Z. Gong was supported by the NSFC grants No. 12001194 and No. 12471402. The work of Z. Gong and Z. Xie was supported by the Key Project of Xiangjiang Laboratory 22XJ01013 and the NSFC grant No. 12171148. The work of S. Yang was supported by the NSFC grant No.12401512.  

\appendix
\section{Proof of Lemma \ref{lem:alg-identity}}\label{appendix:appendix-A}
\shuo{
\begin{proof}
We need to compare coefficients of $a_{n-m}a_{n-p}$ for all $0\le m\le p\le k$ on both sides of \eqref{eq:alg-identity} to verify the existence of $\beta_{j\ell}$. It is clear that this can be formulated as a system of $(k+2)(k+1)/2$ linear equations for $(k+1)k/2$ unknowns $\beta_{j\ell}$: 
\begin{equation}\label{eq:linear-system-identity}
A\vx=\vr,
\end{equation} 
where $\vx$ is the $(k+1)k/2\times1$ vector made of $\beta_{j\ell}$'s, $\vr$ represents the $(k+2)(k+1)/2\times1$ vector made of coefficients of $a_{n-m}a_{n-p}$ ($0\le m\le p\le k$) on the LHS of \eqref{eq:alg-identity}, and $A$ is the $(k+2)(k+1)/2\times(k+1)k/2$ matrix so that $A\vx$ represents the same coefficients on the RHS of \eqref{eq:alg-identity}. 

Towards the existence of $\beta_{j\ell}$, it suffices to show that there are $k+1$ rows in this linear system are redundant.    
More precisely, we use the index $(m,p)$ with $0\le m\le p\le k$ to denote the row of linear system \eqref{eq:linear-system-identity} that corresponds to the coefficients of $a_{n-m}a_{n-p}$. In fact, in step (i)-(iii) we show 
\begin{equation}\label{eq:sys-rel-rhs}
-2\vr_{m,m}=\sum_{p<m}\vr_{p,m}+\sum_{p=m+1}^{k}\vr_{m,p},
\end{equation}
\begin{equation}\label{eq:sys-rel-lhs}
-2(A\vx)_{m,m}=\sum_{p<m}(A\vx)_{p,m}+\sum_{p=m+1}^{k}(A\vx)_{m,p},
\end{equation}
for any $0\le m\le k$.

We remove these redundant rows with indices $(m,m)$ for $0\le m\le k$ in $A$, and denote the new matrix of size $(k+1)k/2\times(k+1)k/2$ as $\tilde A$. In step (iv), we prove that $\tilde A$ is a lower triangular matrix with a proper permutation and thus invertible, which further guarantees the uniqueness of solution $\beta_{j\ell}$.   

{\bf Step (i), preliminary calculations.} By \eqref{eq:def-bdf-k-coeff} and a direct calculation, we conclude that 
\begin{align}\label{eq:LHS-alg}
\text{LHS of \eqref{eq:alg-identity}}&=\delta_0a_n^2+\sum_{m=1}^k2(-1)^m\delta_0\binom{k}{m}a_na_{n-m}+\sum_{m=1}^k\left(2(-1)^m\delta_m\binom{k}{m}+\delta_m\right)a_{n-m}^2 \\ \nonumber
&+\sum_{m=1}^{k-1}\sum_{p=m+1}^k\left(2(-1)^p\delta_m\binom{k}{p}+2(-1)^m\delta_p\binom{k}{m}\right)a_{n-m}a_{n-p}. 
\end{align}
For the RHS of \eqref{eq:alg-identity}, employing \eqref{eq:Backward-formula-expression}, we derive that 
\begin{equation}\label{eq:RHS-alg-1}
    s^{2j}(d^j_ta_{n-\ell})^2=\sum_{m=0}^j\binom{j}{m}^2a_{n-\ell-m}^2+\sum_{m=0}^{j-1}\sum_{p=m+1}^j2(-1)^{m+p}\binom{j}{m}\binom{j}{p}a_{n-\ell-m}a_{n-\ell-p}. 
\end{equation}

{\bf Step (ii), proof of \eqref{eq:sys-rel-rhs}.} When $m=0$, using \eqref{eq:LHS-alg} and \eqref{eq:useful-relation}, it is clear that 
\begin{equation*}
-2\vr_{0,0}=-2\delta_0=2\delta_0\sum_{p=1}^k(-1)^p\binom{k}{p}=\sum_{p=1}^{k}\vr_{0,p}.
\end{equation*}
When $m\ge1$, by \eqref{eq:LHS-alg} and \eqref{eq:useful-relation}, we derive 
\begin{align*}
\frac12\left(\sum_{p<m}\vr_{p,m}+\sum_{p=m+1}^{k}\vr_{m,p}\right)&=\delta_0(-1)^m\binom{k}{m}+\sum_{p=1,p\ne m}^k\left((-1)^p\delta_m\binom{k}{p}+(-1)^m\delta_p\binom{k}{m}\right) \\
&=(-1)^m\binom{k}{m}\sum_{p=0,p\ne m}^k\delta_p+\left(\sum_{p=1,p\ne m}^k(-1)^p\binom{k}{p}\right)\delta_m \\
&=-2(-1)^m\binom{k}{m}\delta_m-\delta_m=-\vr_{m,m}.
\end{align*}

{\bf Step (iii), proof of \eqref{eq:sys-rel-lhs}.} From \eqref{eq:RHS-alg-1}, we derive 
\begin{equation*}
(A\vx)_{m,m}=\sum_{\ell=0}^m\sum_{j=m-\ell,j\ne0}^{k-\ell}\beta_{j\ell}\binom{j}{m-\ell}^2,
\end{equation*}
and
\begin{align*}
\sum_{p<m}(A\vx)_{p,m}+\sum_{p=m+1}^{k}(A\vx)_{m,p}&=2\left(\sum_{p<m}\sum_{\ell=0}^p\sum_{j=m-\ell}^{k-\ell}+\sum_{p=m+1}^k\sum_{\ell=0}^m\sum_{j=p-\ell}^{k-\ell}\right)\beta_{j\ell}(-1)^{p+m-2\ell}\binom{j}{m-\ell}\binom{j}{p-\ell} \\ 
&=2\left(\sum_{\ell=0}^{m-1}\sum_{j=m-\ell}^{k-\ell}\sum_{p=\ell}^{m-1}+\sum_{\ell=0}^m\sum_{j=m+1-\ell}^{k-\ell}\sum_{p=m+1}^{j+\ell}\right)\beta_{j\ell}(-1)^{p+m-2\ell}\binom{j}{m-\ell}\binom{j}{p-\ell}.
\end{align*}
Letting $q=p-\ell$ and separating the case $j=m-\ell$ and $\ell=m$ from the first and second term in the above expression, we further reach 
\begin{align*}
\frac12\left(\sum_{p<m}(A\vx)_{p,m}+\sum_{p=m+1}^{k}(A\vx)_{m,p}\right)&=\sum_{\ell=0}^{m-1}\sum_{j=m+1-\ell}^{k-\ell}\beta_{j\ell}\binom{j}{m-\ell}(-1)^{m-\ell}\sum_{q=0,q\ne m-\ell}^j(-1)^q\binom{j}{q} \\
&+\sum_{\ell=0}^{m-1}\beta_{m-\ell,\ell}(-1)^{m-\ell}\sum_{q=0}^{m-\ell-1}(-1)^q\binom{m-\ell}{q}\\
&+\sum_{j=1}^{k-m}\beta_{jm}\sum_{q=1}^{j}(-1)^q\binom{j}{q}.
\end{align*}
Moreover, we apply \eqref{eq:useful-relation} again to obtain 
\begin{equation*}
\sum_{q=0,q\ne m-\ell}^j(-1)^q\binom{j}{q}=-(-1)^{m-\ell}\binom{j}{m-\ell},\quad \sum_{q=0}^{m-\ell-1}(-1)^q\binom{m-\ell}{q}=-(-1)^{m-\ell},\quad\sum_{q=1}^{j}(-1)^q\binom{j}{q}=-1,
\end{equation*}
and incorporating the fact that $\binom{j}{m-\ell}^2=1$ when $j=m-\ell$ or $m=\ell$ to conclude that 
\begin{align*}
\frac12\left(\sum_{p<m}(A\vx)_{p,m}+\sum_{p=m+1}^{k}(A\vx)_{m,p}\right)&=-\sum_{\ell=0}^m\sum_{j=m-\ell,j\ne0}^{k-\ell}\beta_{j\ell}\binom{j}{m-\ell}^2=-(A\vx)_{m,m}.
\end{align*}

{\bf Step (iv), uniqueness of solution.} 
For $0\le m<p\le k$, 
\begin{equation}\label{eq:Ax-mp}
(A\vx)_{m,p}=2\sum_{\ell=0}^m\sum_{j=p-\ell}^{k-\ell}\beta_{j\ell}(-1)^{p+m-2\ell}\binom{j}{m-\ell}\binom{j}{p-\ell}, 
\end{equation}
which means that the smallest index $j$ so that $\beta_{j\ell}$ has non-zero coefficient on the RHS of \eqref{eq:Ax-mp} is $j=p-m$, and there is only one such term, i.e., $\beta_{p-m,m}$. We first remove $k+1$ rows of $A$ with indices $(m,m)$ for $0\le m\le k$, and denote it as $\tilde A$. 
We next rearrange rows of $\tilde A$ by permuting indices so that $p-m=k,\ldots,1$, namely we put the rows in the following order of indices for $(m,p)$:
\begin{equation}\label{eq:indices-order}
(0,k)\to (0,k-1) \to (1,k) \to (0,k-2) \to (1,k-1) \to (2,k) \to \cdots, 
\end{equation}
and we rearrange columns of $\tilde A$ and $\vx$ so that $\vx$ is made of element $\beta_{p-m,m}$, where $(m,p)$ is taken in the order of \eqref{eq:indices-order}.

By \eqref{eq:Ax-mp}, this process guarantees that $\tilde A$ is a lower triangular matrix.  
Therefore, the solution to the linear system \eqref{eq:linear-system-identity} is unique, and so is the $\beta_{j\ell}$ that satisfy \eqref{eq:alg-identity}.      
\end{proof}
}

\bibliographystyle{unsrt}
\bibliography{ref}
\end{document}